\documentclass[12pt]{amsart}
\usepackage{amssymb,amsmath,amsthm,slashed}
\topmargin = 0.6cm
\oddsidemargin = .2555555cm \evensidemargin = 0.25555cm \textwidth = 6.in
\textheight =8.4in

\newtheorem{theorem}{Theorem}
\newtheorem{lemma}[theorem]{Lemma}
\newtheorem{corollary}[theorem]{Corollary}

\newtheorem{definition}[theorem]{Definition}

\theoremstyle{remark}

\numberwithin{theorem}{section} \numberwithin{equation}{section}

\newcommand{\im}{\textnormal{Im}}

\newcommand{\beq}{\begin{eqnarray*}}
\newcommand{\eeq}{\end{eqnarray*}}

\def\CP1{\mathbb{C}\mathrm{P}^1}

\title{Kummer surfaces associated with Seiberg-Witten curves}

\author{Andreas Malmendier}

\address{Department of Mathematics, Colby College, 
Waterville, ME 04901}
\email{andreas.malmendier@colby.edu}

\begin{document}
\begin{abstract}
By carrying out a rational transformation on the base curve $\mathbb{CP}^1$ of the Seiberg-Witten curve
for $\mathcal{N}=2$ supersymmetric pure $\mathrm{SU}(2)$-gauge theory, we obtain a family of Jacobian elliptic 
K3 surfaces of Picard rank $17$. The isogeny relating the Seiberg-Witten curve for pure $\mathrm{SU}(2)$-gauge theory
to the one for $\mathrm{SU}(2)$-gauge theory with $N_f=2$ massless hypermultiplets extends to define a Nikulin
involution on each K3 surface in the family. We show that the desingularization of the quotient of the K3 surface
by the involution is isomorphic to a Kummer surface of the Jacobian variety of a curve of genus two.
We then derive a relation between the Yukawa coupling associated with the elliptic K3 surface and the
Yukawa coupling of pure $\mathrm{SU}(2)$-gauge theory.
\end{abstract}

\maketitle

\section{Introduction}

\noindent
In physics, families of K3 surfaces are a crucial ingredient in 
the compactification of string theory.  From the viewpoint of string theory, 
elliptically fibered K3 surfaces whose fibers are complex one-dimensional tori $E_{\tau }=\mathbb{C}/(\mathbb{Z}\oplus \mathbb{Z}\tau )$ 
where $\tau \in \mathbb{H}$ and $\mathbb{H}$ is the complex upper half-plane define F-theory vacua in eight dimensions where the scalar field $%
\tau $ of the type-IIB string theory is allowed to be multi-valued \cite{MorrisonVafa}. 

In collaboration with David Morrison \cite{MalmendierMorrison}, we previously
constructed two families of Jacobian elliptic K3 surfaces of Picard rank $17$. 
For each K3 surface $\mathbf{X}$, we found a Shioda-Inose structure, i.e., an
automorphism $\imath $ of order two preserving the holomorphic two-form, such
that $\mathbf{X}/\imath $ is birational to the Kummer surface of the Jacobian 
variety of a curve of genus two. We interpreted the families 
of elliptic K3 surfaces of Picard rank $17$ to describe the moduli spaces of
the two heterotic string theories with large gauge group compactified on a two-torus $%
T^{2}$ in the presence of one Wilson line.

The limit when the Jacobian degenerates to a product of two elliptic curves $
F_{1}\times F_{2}$ describes a well-understood case of the
F-theory/heterotic string duality in the absence of Wilson lines. 
The moduli space of elliptic K3 surfaces with $\mathrm{H}\oplus 
\mathrm{E}_{8}\oplus \mathrm{E}_{8}$ polarization can be identified with the
moduli space of the heterotic string vacua with gauge group $(\mathrm{E}%
_{8}\times \mathrm{E}_{8})\rtimes \mathbb{Z}_{2}$ and $\mathrm{Spin}(32)/%
\mathbb{Z}_{2}$ respectively compactified on $T^{2}$ \cite{MorrisonVafa}. 
The K\"{a}hler metric and $B$-field identify $T^2$ with the elliptic
curve $F_{1}$ and $F_{2}$ respectively. 

In this paper, we give an explicit construction of 
a family of Jacobian elliptic K3 surfaces with high Picard rank
that stems from a different approach. Donaldson theory on the one hand 
and F-theory on the other, come together through an idea provided by Sen \cite{Sen}: 
the identification of the complex gauge coupling in Donaldson theory with the axion-dilaton modulus in
string theory provides an embedding of gauge theory into F-theory. 
To this end, we will prove the following result:
\begin{theorem}
\label{maintheorem}
Let $p(t) = 4 \, t^3 - 3  \,  a  \, t - b$ and $r(t) =t - c$ with $a, b, c\in \mathbb{C}$. We also set
$p_\pm(t) = p(t) \pm r^2(t)$ and denote the roots of $p_+(t)=0$ and $p_-(t)=0$ by $t_2, t_4, t_6$
and $t_1, t_3, t_5$ respectively. We have that
\begin{enumerate}
\item[]
\item The family of Jacobian elliptic surfaces $\mathbf{X}_{a,b,c}$ over $\mathbb{CP}^1$ defined by
\begin{equation*}
\mathbf{X}_{a,b,c} = \left\lbrace (\hat{Y},\hat{X},t) \in \mathbb{C}^3 \mid \hat{Y}^2 = 4 \, \hat{X}^3 - \hat{g}_2 \, \hat{X} - \hat{g}_3 \right\rbrace \;,
\end{equation*}
where $t$ is the affine coordinate on the base curve $\mathbb{CP}^1$ and
\begin{eqnarray*}
\hat{g}_2     & = & \; \frac{1}{3} \, p^2(t) - \frac{1}{4}\, r^4(t) \;,\\
\hat{g}_3     & = & \frac{1}{216} \, p(t) \, \Big( 8 \, p^2(t) - 9 \, r^4(t)  \Big) \;,
\end{eqnarray*}
is a family of $ \langle 8 \rangle  \oplus \mathrm{E}_8  \oplus \mathrm{E}_8$-lattice polarized K3 surfaces.

\item The elliptic K3 surface $\mathbf{X}_{a,b,c} \to \mathbb{CP}^1$ admits a Van Geemen-Sarti involution.

\item For the rational surjective map 
$f_{a,b,c}: \mathbb{CP}^1 \twoheadrightarrow \mathbb{CP}^1$ given by $t \mapsto u = p(t)/r^2(t)$, the elliptic surface $\mathbf{X}_{a,b,c}$
is obtained from the Seiberg-Witten curve for pure $\mathrm{SU}(2)$-gauge theory, viewed as a Jacobian elliptic fibration
$\mathbf{X}_{\scriptscriptstyle \mathrm{SW}} \to \mathbb{CP}^1$  with analytic marking $\hat{\omega}_{\scriptscriptstyle \mathrm{{SW}}}$, by taking the fibered product via $f_{a,b,c}$, i.e., 
\begin{equation*}
\begin{array}{rclcc}
 \mathbf{X}_{a,b,c}  =  \mathbf{X}_{\scriptscriptstyle \mathrm{{SW}}} & \times_{\mathbb{CP}_u^1}   & \mathbb{CP}_t^1 & \longrightarrow             & \mathbb{CP}_t^1  \\
                                                   & \downarrow                 &                 &                             & \phantom{f_{a,b,c}}\downarrow f_{a,b,c}\\
                                                   & \mathbf{X}_{\scriptscriptstyle \mathrm{{SW}}} &                 & \longrightarrow             & \mathbb{CP}_u^1  
\end{array} 
\end{equation*}
The base change replaces the smooth fiber $\hat{E}_u$ of $\mathbf{X}_{\scriptscriptstyle \mathrm{{SW}}}$ by smooth fibers $\hat{E}_t$ with the same $\tau$-parameter, i.e.,
\begin{equation*}
 \tau(\hat{E}_u) = \tau(\hat{E}_t) \;\; \text{for $u=f_{a,b,c}(t)$}\;.
\end{equation*}
The holomorphic cubic forms $\hat{\Xi}_{\scriptscriptstyle \mathrm{{SW}}}$ on $\mathbf{X}_{\scriptscriptstyle \mathrm{SW}}$ and $\hat{\Xi}$ on $\mathbf{X}_{a,b,c}$ are related by
\begin{equation*}
 f_{a,b,c}^* \Big(\hat{\Xi}_{\scriptscriptstyle \mathrm{SW}}\Big) = r^2(t) \, \left( \dfrac{\partial u}{\partial t} \right)^2 \;  \hat{\Xi} \;.
\end{equation*}
  \item For generic values of the parameters $a, b, c$,  the K3 surface $\mathbf{X}_{a,b,c}$ admits a rational map of degree two
   onto the Kummer surface of the Jacobian variety of a generic curve $\mathbf{C}$ of genus two.
           Equations~(\ref{parameter_abc}), (\ref{parameter_ABC}), and (\ref{parameter}) express the complex 
          parameters $a, b, c$ defining $\mathbf{X}_{a,b,c}$ in terms of the roots of the sextic curve (\ref{curve_2}) defining $\mathbf{C}$.
\end{enumerate}
\end{theorem}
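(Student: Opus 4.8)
The plan is to extract parts (1)--(3) from two algebraic identities and to reduce part (4) to a V\'elu computation followed by recognition of a Kummer normal form.

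First, a direct substitution shows that $\hat{X}=-p(t)/6$ is a root of $4\hat{X}^3-\hat{g}_2\hat{X}-\hat{g}_3$; equivalently $4\hat{X}^3-\hat{g}_2\hat{X}-\hat{g}_3=4\bigl(\hat{X}+\tfrac16 p\bigr)\bigl(\hat{X}^2-\tfrac16 p\,\hat{X}-\tfrac1{18}p^2+\tfrac1{16}r^4\bigr)$. Hence $\hat{P}=(-p(t)/6,\,0)$ is a two-torsion section of $\mathbf{X}_{a,b,c}\to\mathbb{CP}^1$, and fibrewise translation $\beta$ by $\hat{P}$ is an involution that acts trivially on $H^{2,0}$, i.e. a Van Geemen-Sarti involution; this is part (2), and $\hat{P}$ also feeds into part (1). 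Next, on substituting $u=p/r^2$ one checks that $\hat{g}_2=r^4\bigl(\tfrac13 u^2-\tfrac14\bigr)$ and $\hat{g}_3=r^6\cdot\tfrac1{216}\bigl(8u^3-9u\bigr)$; the functions $g_2^{\scriptscriptstyle\mathrm{SW}}(u)=\tfrac13 u^2-\tfrac14$ and $g_3^{\scriptscriptstyle\mathrm{SW}}(u)=\tfrac1{216}(8u^3-9u)$ are, in a suitable normalization of $u$, the Weierstrass coefficients of the Seiberg--Witten curve for pure $\mathrm{SU}(2)$, whose discriminant $\tfrac1{64}(u^2-1)$ exhibits fibres of type $I_1$ over $u=\pm1$ and $I_4^*$ over $u=\infty$. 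Thus $\mathbf{X}_{a,b,c}$ is the quadratic twist by $r(t)$ (the change of analytic marking $\hat{\omega}_{\scriptscriptstyle\mathrm{SW}}\mapsto r^2\hat{\omega}_{\scriptscriptstyle\mathrm{SW}}$) of $f_{a,b,c}^*\mathbf{X}_{\scriptscriptstyle\mathrm{SW}}$, which is exactly the fibre-product diagram; the equality $\tau(\hat{E}_u)=\tau(\hat{E}_t)$ for $u=f_{a,b,c}(t)$ is then tautological, and $f_{a,b,c}^*\hat{\Xi}_{\scriptscriptstyle\mathrm{SW}}=r^2(\partial u/\partial t)^2\,\hat{\Xi}$ follows by carrying the cubic form through the base change (which yields $(\partial u/\partial t)^2$) and the twist (which yields $r^2$). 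This gives part (3).

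For part (1), pulling back the discriminant gives $\hat{\Delta}=\hat{g}_2^3-27\hat{g}_3^2=\tfrac1{64}\,r^8(t)\,p_+(t)\,p_-(t)$, so $\mathbf{X}_{a,b,c}$ has a fibre of type $I_8$ over $t=c$ (where $\hat{g}_2,\hat{g}_3$ are nonzero), fibres of type $I_1$ over the six roots $t_1,\dots,t_6$ of $p_+p_-$, and --- pulling back the $I_4^*$ along the unramified branch $t\mapsto\infty$ of $f_{a,b,c}$, or by local Tate analysis --- a fibre of type $I_4^*$ over $t=\infty$; the Euler numbers $8+6+10=24$ confirm a K3 surface. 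Hence the trivial lattice is $\mathrm{H}\oplus\mathrm{D}_8\oplus\mathrm{A}_7$ and $\rho\ge17$, with equality for generic $(a,b,c)$ since the Mordell--Weil rank is then zero and the three-dimensional parameter space matches the dimension $20-17$ of the moduli space. The section $\hat{P}$ now realizes $\mathrm{Pic}(\mathbf{X}_{a,b,c})$ as an index-two even overlattice of $\mathrm{H}\oplus\mathrm{D}_8\oplus\mathrm{A}_7$; computing, from the vanishing of the N\'eron--Tate height of $\hat{P}$, which components of the $I_8$ and the $I_4^*$ it meets determines the gluing vector, and the resulting overlattice has the same signature and discriminant form as $\mathrm{H}\oplus\mathrm{E}_8\oplus\mathrm{A}_7$, hence is isometric to it --- this is the asserted lattice polarization.

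Part (4) is the crux and, I expect, the main obstacle. Apply V\'elu's formulae to the kernel $\{O,\hat{P}\}$ to obtain the quotient $\mathbf{Y}_{a,b,c}:=\widetilde{\mathbf{X}_{a,b,c}/\beta}$ as an explicit Jacobian elliptic K3 fibration over $\mathbb{CP}^1_t$ with Weierstrass coefficients polynomial in $p(t),r(t)$; on the Seiberg--Witten side $\mathbf{Y}_{a,b,c}$ is the base change along $f_{a,b,c}$ of the curve $2$-isogenous to $\mathbf{X}_{\scriptscriptstyle\mathrm{SW}}$ (the $N_f=2$ curve), so its singular fibres are those forced by the isogeny. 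Then recognize $\mathbf{Y}_{a,b,c}$, after undoing the twist by $r(t)$, as one of the standard elliptic fibrations on the Kummer surface of the Jacobian of the genus-two curve $\mathbf{C}\colon s^2=p_+(t)\,p_-(t)$ --- the sextic \eqref{curve_2}, with Weierstrass points $t_1,\dots,t_6$ --- by matching Weierstrass coefficients against the known normal form in the symmetric functions of $t_1,\dots,t_6$; splitting those according to $p_-(t)=4\prod_{i\ \mathrm{odd}}(t-t_i)$ and $p_+(t)=4\prod_{i\ \mathrm{even}}(t-t_i)$ and comparing with $p(t)=4t^3-3at-b$, $r(t)=t-c$ produces the dictionary \eqref{parameter_abc}, \eqref{parameter_ABC}, \eqref{parameter}, and the composite $\mathbf{X}_{a,b,c}\dashrightarrow\mathbf{Y}_{a,b,c}$ is the degree-two map onto the Kummer surface. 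As a cross-check one can show lattice-theoretically that $\mathrm{NS}(\mathbf{Y}_{a,b,c})$ contains a primitive copy of the Kummer lattice --- built from the sixteen resolution curves associated with the fixed locus of $\beta$ and suitable fibre components --- so that by Nikulin's criterion $\mathbf{Y}_{a,b,c}$ is a Kummer surface $\mathrm{Kum}(A)$, with $A\cong\mathrm{Jac}(\mathbf{C})$ after identifying transcendental lattices. The difficulty is concentrated here: the V\'elu step is mechanical, but bringing $\mathbf{Y}_{a,b,c}$ into manifest Kummer form and inverting the relations to express $a,b,c$ through the roots of the sextic is a delicate elimination, and the lattice cross-check requires care with the sixteen disjoint rational curves and with primitivity. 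By contrast, once the two factorizations --- of the Weierstrass cubic and of $\hat{\Delta}$ --- are secured, parts (2), (3), and the fibre-and-lattice analysis of (1) are essentially forced.
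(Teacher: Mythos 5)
Your treatment of parts (1)--(3) is essentially the paper's own: trivial lattice $\mathrm{H}\oplus\mathrm{D}_8\oplus\mathrm{A}_7$ from the fiber configuration $I_8\oplus 6I_1\oplus I_4^*$, the two-torsion section $(\hat X,\hat Y)=(-p/6,0)$ producing an index-two even overlattice identified with $\mathrm{H}\oplus\mathrm{E}_8\oplus\mathrm{A}_7$ by discriminant forms (the paper does this in Appendix A.2), translation by that section as the Van Geemen--Sarti involution, and the identification $\hat g_2=r^4\,g^{\scriptscriptstyle\mathrm{SW}}_2(u)$, $\hat g_3=r^6\,g^{\scriptscriptstyle\mathrm{SW}}_3(u)$ with $u=p/r^2$ giving the fibered product, the $\tau$-matching, and the factor $r^2(\partial u/\partial t)^2$ for the Yukawa coupling (the paper derives this via the periods and Freed's formula, but the bookkeeping is the same).

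The genuine gap is in part (4), at exactly the step you flag as the crux. You assert that $\mathbf{Y}_{a,b,c}$ (the V\'elu quotient) is an elliptic fibration on $\mathrm{Kum}(\mathrm{Jac}(\mathbf{C}))$ for $\mathbf{C}\colon s^2=p_+(t)\,p_-(t)$, i.e.\ that the sextic \eqref{curve_2} has its roots at the fiber locations $t_1,\dots,t_6$, and that the dictionary \eqref{parameter_abc}, \eqref{parameter_ABC}, \eqref{parameter} comes from comparing symmetric functions of $t_1,\dots,t_6$ with the coefficients of $p$ and $r$. That identification is unproved and is in fact not what happens: the paper's Equations~\eqref{parameter} show that each fiber location $r_i=t_i-c$ is a nontrivial rational function of \emph{all six} branch points $\theta_1,\dots,\theta_6$ of $\mathbf{C}$ (ratios of products of differences), not a M\"obius image of them, so the double cover of the base branched at $t_1,\dots,t_6$ is generically not isomorphic to $\mathbf{C}$ and your proposed matching of Weierstrass coefficients would produce the wrong parameter correspondence. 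The paper closes this gap differently: it invokes Keum's theorem that the Jacobian Kummer of a generic genus-two curve carries an elliptic fibration with fibers $6I_2\oplus I_4\oplus I_2^*$ and Mordell--Weil group $(\mathbb{Z}_2)^2$, matches this with the fibration on $\mathbf{Y}_{a,b,c}$, and then works inside the $(16,6)$-configuration with the pencil of conics through $q_{14},q_{15},q_{23},q_{26}$, identifying which degenerate conics carry the $I_2^*$, $I_4$, and the six $I_2$ fibers; only this explicit geometry yields the locations $r_i$ in terms of the $\theta_j$ and hence $a,b,c$. Your proposed lattice-theoretic cross-check also has a flaw: the fixed locus of the Nikulin involution $\beta$ consists of eight points, so the quotient contributes an even eight of exceptional curves on $\mathbf{Y}_{a,b,c}$, not the sixteen disjoint rational curves needed for Nikulin's Kummer criterion; those sixteen curves must be produced from the fibration data (sections, torsion sections and fiber components realizing the $(16,6)$-configuration), which is again where the real work lies.
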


\noindent 
Throughout this article we call a choice of values for the complex parameters $a, b, c$ generic if the Jacobian elliptic fibration $\mathbf{X}_{a,b,c}$ has the singular fibers
$ I_8 \oplus 6 \, I_1 \oplus I_4^*$. 

\newpage
\noindent
Other explicit examples of Jacobian elliptic K3 surfaces of Picard rank 17 that are the Kummer surfaces
of Jacobian varieties of generic curves of genus two have been constructed previously by Hoyt \cite{Hoyt89,Hoyt},
Kumar \cite{Kumar}, and Clingher-Doran \cite{CD3,CD4}.

The outline of this paper is as follows. In Section \ref{prelim}, we recall facts about K3 surfaces, in
particular Kummer surfaces associated with curves of genus two. In Section \ref{K3families}, we define 
two 3-parameter families $\mathbf{X}_{a,b,c}$ and $\mathbf{Y}_{a,b,c}$ of Jacobian elliptic K3 surfaces of Picard rank $17$. 
We also show that each surface $\mathbf{X}_{a,b,c}$ admits a Van Geemen-Sarti involution and a rational map of degree two onto $\mathbf{Y}_{a,b,c}$.
In Section \ref{SeibergWitten_curves}, we consider the rational elliptic surfaces $\mathbf{X}_{\scriptscriptstyle \mathrm{{SW}}}$ and $\mathbf{Y}_{\scriptscriptstyle \mathrm{{SW}}}$
which are the Seiberg-Witten curves for $\mathcal{N}=2$ supersymmetric pure $\mathrm{SU}(2)$-gauge theory
and $\mathrm{SU}(2)$-gauge theory with $N_f=2$ massless hypermultiplets respectively.
By carrying out a rational transformation on the base curve $\mathbb{CP}^1$ of the Seiberg-Witten curves $\mathbf{X}_{\scriptscriptstyle \mathrm{{SW}}}$ and $\mathbf{Y}_{\scriptscriptstyle \mathrm{{SW}}}$
we obtain the families of Jacobian elliptic K3 surfaces $\mathbf{X}_{a,b,c}$ and $\mathbf{Y}_{a,b,c}$, respectively.
The two-isogeny relating the Seiberg-Witten curve for $N_f=0$ to the one for $N_f=2$ extends to define a Nikulin involution on $\mathbf{X}_{a,b,c}$ 
and a degree-two rational map onto $\mathbf{Y}_{a,b,c}$.  In Section~\ref{MonodromyPeriodsYukawa}, we derive a simple relation
between the monodromies around the singular fibers, the period integrals, and the Yukawa couplings 
for the elliptic K3 surfaces and the Seiberg-Witten curves.
In Section~ \ref{kummer_surface}, we show that for generic values of $a, b, c$ the K3 surface $\mathbf{Y}_{a,b,c}$ is
the Kummer surfaces for the Jacobian variety of a generic curve $\mathbf{C}$ of genus two and express the parameters $a, b, c$ in terms 
of the roots of the sextic curve defining $\mathbf{C}$.
In Section \ref{proof}, we combine these results to prove Theorem \ref{maintheorem}.

\section{Preliminaries}\label{prelim}
\noindent
Recall that a K3 surface is a smooth simply connected complex projective surface with trivial canonical bundle.
If $\mathbf{Y}$ is a K3 surface it follows $H^2(\mathbf{Y},\mathbb{Z}) \cong  \mathrm{H}^3 \oplus \mathrm{E}_8(-1)^2$. 
Here, $\mathrm{H}$ is the lattice $\mathbb{Z}^2$ with the quadratic form $2 xy$,
and $\mathrm{E}_8(-1)$ is the negative definite lattice
associated with the exceptional root systems of $\mathrm{E}_8$.

For a complex two-dimensional torus $\mathbf{Z}$ it follows $H^2(\mathbf{Z},\mathbb{Z}) \cong \mathrm{H}^3$. 
The algebraic complex tori of dimension one are elliptic curves. But most complex tori of dimension two are 
not algebraic. The algebraic ones are called Abelian surfaces. Examples are products of two elliptic curves and the 
Jacobian varieties of smooth projective curves of genus two. Let $\mathbf{Z}$ be a two-dimensional Abelian variety. The map $-\mathbb{I}$ has sixteen distinct fixed points whence
$\mathbf{Z} / \{ \pm \mathbb{I} \}$ is a singular surface with $16$ rational double points. The minimal resolution of $\mathbf{Z} / \{ \pm \mathbb{I} \}$
is a special K3 surface called the Kummer surface $\mathrm{Kum}(\mathbf{Z})$.

Let $\mathbf{Y}$ be a Abelian or K3 surface. 
The class $\omega \in H^2(\mathbf{Y},\mathbb{C})$ of the non-vanishing holomorphic 
two-form is unique up to multiplication by a scalar. The polarized Hodge structure of weight two will be denoted 
as follows:
\begin{equation*}
 \begin{array}{ccccccc}
  H^2(\mathbf{Y},\mathbb{C}) & = & H^{2,0}(\mathbf{Y}) & \oplus & H^{1,1}(\mathbf{Y}) & \oplus & 
                                         H^{0,2}(\mathbf{Y}) \\
 && \parallel && \parallel && \parallel \\
 && \langle \omega \rangle_{\mathbb{C}} && \langle \omega, \overline{\omega} \rangle_{\mathbb{C}}^{\perp}  &&
  \langle \overline{\omega} \rangle_{\mathbb{C}} 
 \end{array}
\end{equation*}
A polarization is given by the intersection form, i.e., a non degenerate integer symmetric bilinear 
form on $H(\mathbf{Y},\mathbb{Z})$ extended to $H^2(\mathbf{Y},\mathbb{C})$ by linearity.
A principally polarized Abelian surface is either the Jacobi variety of a smooth projective curve of genus two where the polarization 
is the class of the theta divisor,  or is the product of two elliptic curves with the product polarization.

The Picard group $\mathrm{Pic}(\mathbf{Y})$ is the group of Cartier divisors modulo linear equivalence and can be identified
with $H^1(\mathbf{Y}, \mathcal{O}^*_{\mathbf{Y}})$. The kernel of the first Chern class $c_1: \mathrm{Pic}(\mathbf{Y}) \to H^2(\mathbf{Y})$ 
is denoted by $\mathrm{Pic}^0(\mathbf{Y})$ and the quotient $\mathrm{Pic}(\mathbf{Y})/\mathrm{Pic}^0(\mathbf{Y}) = \mathrm{NS}(\mathbf{Y})$
is the N\'eron-Severi group. The Picard group together with the intersection form is an Euclidean lattice.
The Picard number $\rho(\mathbf{Y})$ is the $\mathrm{rank}$ of  $\mathrm{NS}(\mathbf{Y})$, and the
N\'eron-Severi lattice is an even lattice of signature $(1,\rho(\mathbf{Y})-1)$.

The first Chern class restricts to an isomorphism $c_1: \mathrm{Pic}(\mathbf{Y}) \to H^2(\mathbf{Y},\mathbb{Z}) \cap H^{1,1}(\mathbf{Y})$
by the Lefschetz theorem. 
$H^1(\mathbf{Y}, \mathcal{O}_{\mathbf{Y}})$ maps onto $\mathrm{Pic}^0(\mathbf{Y})$. If $\mathbf{Y}$ is an elliptic surface over $\mathbb{CP}^1$ 
it follows that $H^1(\mathbf{Y}, \mathcal{O}_{\mathbf{Y}})=0$. Hence, for an elliptic K3 surface the natural map $\mathrm{Pic}(\mathbf{Y}) \to \mathrm{NS}(\mathbf{Y})$ 
is an isomorphism. The orthogonal complement $\mathrm{T}_{\mathbf{Y}} = \mathrm{NS}(\mathbf{Y})^{\perp} \in H^2(\mathbf{Y},\mathbb{Z})\cap H^{1,1}(\mathbf{Y})$ is
called the transcendental lattice and carries the induced Hodge structure. A Hodge isometry between two transcendental
lattices of Abelian or K3 surfaces is an isometry preserving the Hodge structure. 

A lattice polarization on the algebraic K3 surface is given by a primitive lattice embedding
$\mathrm{N} \hookrightarrow \mathrm{NS}$ whose image contains a pseudo-ample class. 
Here $\mathrm{N}$ is a choice of even lattice of signature $(1, r)$ with $0 \le r \le 19$.
The family $\mathbf{X}_{a,b,c}$ in Theorem~\ref{maintheorem} is an example of a family of K3 surfaces
that are polarized by the even lattice $\mathrm{N} = \mathrm{H} \oplus \mathrm{E}_8 \oplus \mathrm{A}_7$ of rank seventeen.
Surfaces in this class have Picard ranks taking the four possible values 17, 18, 19 or 20.
For example, for special values of the parameters $a, b, c$ the lattice polarization extends canonically to a polarization by the unimodular rank-eighteen lattice
$ \mathrm{H} \oplus \mathrm{E}_8 \oplus \mathrm{E}_8$ (see Section \ref{coinciding_roots}).

\subsection{Two isogenies of K3 surfaces}
For the convenience of the reader we review the definition of Nikulin involution, Shioda-Inose structure, and Van Geemen-Sarti involution.
This exposition is based is based on the introductory chapters of \cite{CD, CD3}. 

Let $\mathbf{X}$ be an algebraic K3 surface over $\mathbb{C}$. A Nikulin involution is an involution $\imath$ on $\mathbf{X}$ such that
$\imath^* \omega =\omega$ for all $\omega \in H^{2,0}(\mathbf{X})$. If a Nikulin involution $\imath$ exists on $\mathbf{X}$, then it has 
exactly eight fixed points. In such a case, the quotient space is a surface with eight rational double point singularities of type $A_1$. 
The minimal resolution of this singular space is a new K3 surface, which we denote by $\mathbf{Y}$. The two K3 surfaces $\mathbf{X}$ and 
$\mathbf{Y}$ are related by a (generically) two-to-one rational map $\mathrm{pr}: \mathbf{X} \dasharrow \mathbf{Y}$
with a branch locus given by eight disjoint rational curves (the even eight configuration in the sense of Mehran \cite{Mehran}).
A special kind of Nikulin involution is a Shioda-Inose structure:
\begin{definition}
A K3 surface $\mathbf{X}$ admits a Shioda-Inose structure if there is a Nikulin involution with a rational quotient map $\mathrm{pr}:
\mathbf{X} \dashrightarrow \mathbf{Y}$ where $\mathbf{Y}$ is a Kummer surface, and the map $\mathrm{pr}_*$ induces a
Hodge isometry $\mathrm{T}_{\mathbf{X}}(2) \cong \mathrm{T}_{\mathbf{Y}}$.
\end{definition}
\noindent
The following theorem is a corollary of a theorem proved in \cite{Shioda} by Shioda in the case of an Abelian surface 
and in \cite{Kulikov,Todorov,Looijenga,Siu,Namikawa} in the case of an K3 surface:
\begin{theorem}[Morrison \cite{Morrison84}]
\label{Morrison}
\begin{enumerate}
\item[]
\item If $\mathrm{T} \hookrightarrow H^3$ (resp. $\mathrm{T} \hookrightarrow \mathrm{H}^3 \oplus \mathrm{E}_8(-1)^2$) is a primitive sub-lattice
of signature $(2,4-\rho)$ (resp. $(2,20-\rho)$), then there exists an Abelian surface (resp. algebraic K3 surface) $\mathbf{Y}$ and
an isometry  $\mathrm{T}_{\mathbf{Y}} \xrightarrow{\sim} \mathrm{T}$. 
\item An algebraic K3  surface $\mathbf{Y}$ admits a Shioda-Inose structure if and only if there is a primitive embedding $\mathrm{T}_{\mathbf{Y}} \hookrightarrow \mathrm{H}^3$.
\end{enumerate}
\end{theorem}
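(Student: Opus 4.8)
This is deduced from two inputs: surjectivity of the period map --- for algebraic K3 surfaces, and for complex two-torus (in particular Abelian) periods in the form due to Shioda \cite{Shioda} and to \cite{Kulikov,Todorov,Looijenga,Siu,Namikawa} --- together with Nikulin's theory of primitive embeddings of even lattices and of discriminant forms. The plan is to read off part (1) directly from surjectivity, and to reduce part (2) to the lattice equivalence ``$\mathrm{T}_{\mathbf{Y}}$ embeds primitively in $\mathrm{H}^3$ if and only if $\mathrm{E}_8(-1)^2$ embeds primitively in $\mathrm{NS}(\mathbf{Y})$'' together with the geometric realization of the associated Nikulin involution.

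For part (1), fix a primitive embedding $\mathrm{T}\hookrightarrow\Lambda$, where $\Lambda=\mathrm{H}^3$ (resp. $\Lambda=\mathrm{H}^3\oplus\mathrm{E}_8(-1)^2$). Since $\mathrm{T}$ has signature $(2,\ast)$, the period domain $\Omega_{\mathrm{T}}\subset\mathbb{P}(\mathrm{T}\otimes\mathbb{C})$ of weight-two Hodge structures of K3 type on $\mathrm{T}$ is non-empty, and the embedding identifies it with a subdomain of the period domain of $\Lambda$. For a very general point of $\Omega_{\mathrm{T}}$, surjectivity of the period map produces a marked surface $\mathbf{Y}$ --- a complex two-torus, resp. a K3 surface --- whose N\'eron-Severi lattice is exactly the orthogonal complement of $\mathrm{T}$ in $\Lambda$; hence $\mathrm{T}_{\mathbf{Y}}$ is isometric to $\mathrm{T}$. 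This complement has signature $(1,\rho-1)$ with $\rho\ge 1$, so it contains a class of positive self-intersection, which makes $\mathbf{Y}$ projective --- an Abelian surface, resp. an algebraic K3 surface. When, as in part (2), one wants $\mathbf{Y}$ to carry a prescribed Hodge structure on $\mathrm{T}$, one feeds the corresponding period point into surjectivity; it is automatically very general with respect to $\mathrm{T}$.

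For part (2), suppose first that $\mathbf{Y}$ admits a Shioda-Inose structure, with quotient map $\mathrm{pr}\colon\mathbf{Y}\dashrightarrow\mathrm{Kum}(\mathbf{Z})$ and Hodge isometry $\mathrm{T}_{\mathbf{Y}}(2)\cong\mathrm{T}_{\mathrm{Kum}(\mathbf{Z})}$. The classical Hodge isometry $\mathrm{T}_{\mathrm{Kum}(\mathbf{Z})}\cong\mathrm{T}_{\mathbf{Z}}(2)$ for the Kummer surface of an Abelian surface gives $\mathrm{T}_{\mathbf{Y}}(2)\cong\mathrm{T}_{\mathbf{Z}}(2)$, whence $\mathrm{T}_{\mathbf{Y}}\cong\mathrm{T}_{\mathbf{Z}}$; but $\mathrm{T}_{\mathbf{Z}}$ embeds primitively in $H^2(\mathbf{Z},\mathbb{Z})\cong\mathrm{H}^3$, so $\mathrm{T}_{\mathbf{Y}}$ does too. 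Conversely, suppose $\mathrm{T}_{\mathbf{Y}}\hookrightarrow\mathrm{H}^3$ primitively. Placing this $\mathrm{H}^3$ as the first summand of $\Lambda=\mathrm{H}^3\oplus\mathrm{E}_8(-1)^2$, the orthogonal complement of $\mathrm{T}_{\mathbf{Y}}$ in $\Lambda$ is $\mathrm{K}\oplus\mathrm{E}_8(-1)^2$, with $\mathrm{K}$ the complement of $\mathrm{T}_{\mathbf{Y}}$ inside $\mathrm{H}^3$. Since $\mathrm{NS}(\mathbf{Y})$ is the orthogonal complement in $\Lambda$ of a primitively embedded copy of $\mathrm{T}_{\mathbf{Y}}$, it lies in the same genus as $\mathrm{K}\oplus\mathrm{E}_8(-1)^2$; being indefinite of rank at least $16$, it is unique in its genus by Nikulin's criterion, and so admits a primitive embedding of $\mathrm{E}_8(-1)^2$. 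Morrison's construction then realizes the two $\mathrm{E}_8$-summands by disjoint configurations of smooth rational curves and builds from them a Nikulin involution $\imath$ on $\mathbf{Y}$; applying part (1) in the Abelian case to $\mathrm{T}_{\mathbf{Y}}$ with its own Hodge structure supplies an Abelian surface $\mathbf{Z}$ with $\mathrm{T}_{\mathbf{Z}}\cong\mathrm{T}_{\mathbf{Y}}$, and a comparison of periods identifies the minimal resolution of $\mathbf{Y}/\imath$ with $\mathrm{Kum}(\mathbf{Z})$, the induced map on transcendental lattices furnishing the required isometry $\mathrm{T}_{\mathbf{Y}}(2)\cong\mathrm{T}_{\mathrm{Kum}(\mathbf{Z})}$.

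The main obstacle is the converse in part (2): upgrading the lattice-theoretic datum $\mathrm{E}_8(-1)^2\hookrightarrow\mathrm{NS}(\mathbf{Y})$ to an honest geometric Nikulin involution whose quotient is a Kummer surface. This is the heart of Morrison's theorem, and where Nikulin's lattice theory and the Torelli theorem interact most delicately; everything else follows either immediately from surjectivity of the period map or from routine discriminant-form bookkeeping.
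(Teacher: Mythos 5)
The paper does not actually prove this statement: it is quoted from Morrison \cite{Morrison84} (as a corollary of the surjectivity of the period map due to Shioda in the torus case and to Kulikov--Todorov--Looijenga--Siu--Namikawa in the K3 case), so there is no in-paper argument to compare against. Measured against Morrison's original proof, your outline is essentially the right one: part (1) by feeding a suitable period point of the sub-period-domain attached to $\mathrm{T}$ into surjectivity (and your observation that a period point coming from an actual transcendental Hodge structure is automatically ``generic enough'', i.e. $\mathrm{T}$ contains no nonzero integral $(1,1)$-classes, is the correct justification); the forward direction of (2) via $\mathrm{T}_{\mathbf{Y}}(2)\cong\mathrm{T}_{\mathrm{Kum}(\mathbf{Z})}\cong\mathrm{T}_{\mathbf{Z}}(2)$; and the converse via the genus argument for the orthogonal complement. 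For the latter, note that $\rho\ge 17$ (since $\mathrm{T}_{\mathbf{Y}}$ has signature $(2,20-\rho)$ and must fit into signature $(3,3)$), so $\ell(A_{\mathrm{NS}})=\ell(A_{\mathrm{T}})\le 5\le\rho-2$, which is exactly what Nikulin's uniqueness-in-genus criterion needs; you should say this rather than just ``rank at least $16$''. Also, primitivity of $\mathrm{E}_8(-1)^2\hookrightarrow\mathrm{NS}(\mathbf{Y})$ is automatic, since a unimodular sublattice always splits off.

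Two caveats on the last step. First, invoking ``Morrison's construction'' to pass from $\mathrm{E}_8(-1)^2\subset\mathrm{NS}(\mathbf{Y})$ to a Nikulin involution with Kummer quotient is circular if the goal is to prove Morrison's theorem; you do flag this as the heart of the matter, but as written it is a black box. Second, your description of that step is not how it goes: the involution is not built from ``disjoint configurations of smooth rational curves'' realizing the two $\mathrm{E}_8$-summands. Morrison takes the lattice isometry of $H^2(\mathbf{Y},\mathbb{Z})=\mathrm{E}_8(-1)^2\oplus(\mathrm{E}_8(-1)^2)^{\perp}$ that exchanges the two $\mathrm{E}_8(-1)$ factors and is the identity on the complement (hence on $\mathrm{T}_{\mathbf{Y}}$), corrects it by reflections in $(-2)$-classes so that it is effective, and applies the strong Torelli theorem to realize it by a symplectic automorphism $\imath$; the identification of the resolved quotient with $\mathrm{Kum}(\mathbf{Z})$ then uses the computation $\mathrm{T}_{\mathbf{X}}\cong\mathrm{T}_{\mathbf{Y}}(2)$ for the quotient together with Morrison's lattice-theoretic characterization of Kummer surfaces (a Hodge isometry of transcendental lattices alone does not identify two K3 surfaces), with $\mathbf{Z}$ supplied, as you say, by part (1) in the torus case. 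With those corrections your sketch matches the cited proof; as it stands, the converse of (2) is asserted rather than argued.
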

More generally, K3 surfaces that admit rational double covers in the Picard rank 17
onto Kummer surfaces were studied by Mehran in her Michigan PhD thesis \cite{Mehran}. 
The classes of all K3 surfaces with such a degree-two rational map contains the class of all K3
surfaces with a Shioda-Inose structure.  The remaining 372 classes of K3 surfaces in
Mehran's lattice-theoretic classification \cite{Mehran2} contain ten possibilities associated with the generic 
transcendental lattice being $\mathrm{H} \oplus \mathrm{H} \oplus \langle -8 \rangle$.
The family $\mathbf{X}_{a,b,c}$ for a generic choice choice of the parameters $a, b, c$ in Theorem~\ref{maintheorem} 
will serve as an explicit construction of one of these ten possibilities.

The Nikulin involution on $\mathbf{X}_{a,b,c}$ will be constructed by fiber-wise translations by a section of
order two in the Jacobian elliptic fibration. This class of involutions was discussed by Van Geemen and Sarti \cite{VanGeemenSarti}:
\begin{definition}
 A Van Geemen-Sarti involution is a Nikulin involution $\imath$ on $\mathbf{X}$ for which there exists a triple
 $(\varphi_{\mathbf{X}}, \hat{O}, \hat{\sigma})$ such that:
 \begin{enumerate}
 \item $\varphi_{\mathbf{X}}: \mathbf{X} \to \mathbb{CP}^1$ is an elliptic fibration on $\mathbf{X}$,
 \item $\hat{O}$ and $\hat{\sigma}$ are disjoint sections of $\varphi_{\mathbf{X}}$,
 \item $\hat{\sigma}$ is an element of order two in the Mordell-Weil group $\mathrm{MW}(\mathbf{X})$,
 \item $\imath$ is the involution obtained by extending the fiber-wise translations by $\hat{\sigma}$ in the smooth fibers of $\varphi$ using the group structure with neutral element $\hat{O}$.
 \end{enumerate}
 \end{definition}
 
\subsection{Kummer surfaces associated with curves of genus two}
\label{Kummer_surfaces}
Let us describe the Kummer surfaces of Jacobian varieties of genus-two curves in more detail.
The following well-known theorem \cite{Nikulin} explains the properties of the Jacobian variety:
\begin{theorem}
\label{Nikulin}
\begin{enumerate}
\item[]
\item Suppose that an irreducible non-singular curve $\mathbf{C}$ is embedded in an Abelian variety of dimension two. If $\mathbf{C}\cdot\mathbf{C}=2$
then $\mathbf{C}$ is a curve of genus two.
\item Let $\mathbf{C}$ be a curve of genus two. Then there exists a unique pair $(J(\mathbf{C}),j_{\mathbf{C}})$ where $J(\mathbf{C})$ is an Abelian variety
of dimension two and $j_{\mathbf{C}}:\mathbf{C} \to J(\mathbf{C})$ is an embedding. The uniqueness is understood in the following way: if there exists another
such pair $(J(\mathbf{C})',j'_{\mathbf{C}})$, then there exists a unique isomorphism $\mathbf{b}: J(\mathbf{C}) \to J(\mathbf{C})'$ such $j'_{\mathbf{C}} = 
\mathbf{b} \circ j_{\mathbf{C}}$. $J(\mathbf{C})$ is the Jacobian of the curve $\mathbf{C}$.
\item We can regain $\mathbf{C}$ from the pair $(J(\mathbf{C}),\mathcal{O}_{\mathbf{C}})$.
\item We can regain $\mathbf{C}$ from the pair $(J(\mathbf{C}),E)$, where $E=[\mathbf{C}]$ is the class of $\mathbf{C}$ in the N\'eron-Severi group
$\mathrm{NS}(J(\mathbf{C}))$.
\end{enumerate}
\end{theorem}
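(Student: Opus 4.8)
The plan is to establish the four statements in turn: the first is immediate from adjunction, while the remaining three rest on the analytic construction of the Jacobian and on the Torelli theorem, which is especially transparent in genus two because the theta divisor is itself a curve isomorphic to $\mathbf{C}$. For (1) I would invoke the adjunction formula for a smooth irreducible curve $\mathbf{C}$ on a smooth projective surface $S$, namely $2\,g(\mathbf{C})-2 = \mathbf{C}\cdot(\mathbf{C}+K_S)$. An Abelian surface carries a translation-invariant nowhere-vanishing holomorphic two-form, so its canonical class is trivial, $K_S \sim 0$; substituting and using the hypothesis $\mathbf{C}\cdot\mathbf{C}=2$ yields $2\,g-2=2$, hence $g=2$.

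For (2), existence comes from the standard construction $J(\mathbf{C}) = H^0(\mathbf{C},\Omega^1_{\mathbf{C}})^{\vee}/H_1(\mathbf{C},\mathbb{Z})$, a two-dimensional complex torus which the Riemann form attached to the intersection pairing on $H_1(\mathbf{C},\mathbb{Z})$ endows with a principal polarization, making it an Abelian surface. After fixing a base point $P_0$, the Abel--Jacobi map $j_{\mathbf{C}}\colon P \mapsto \big(\omega \mapsto \int_{P_0}^{P}\omega\big)$ is holomorphic and injective, since $j_{\mathbf{C}}(P)=j_{\mathbf{C}}(Q)$ forces $P\sim Q$ and hence $P=Q$ for $g\ge 1$; it is an immersion because the canonical system of a genus-two curve is base-point-free, so the evaluation $H^0(\mathbf{C},\Omega^1_{\mathbf{C}})\to\Omega^1_{\mathbf{C}}|_P$ is surjective and its transpose, the differential of $j_{\mathbf{C}}$, is everywhere injective. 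Thus $j_{\mathbf{C}}$ is an embedding. For uniqueness I would use the universal property of the Albanese variety: any pointed morphism $\mathbf{C}\to A$ into an Abelian variety factors uniquely through $j_{\mathbf{C}}$ via pullback of invariant one-forms. Given a second such pair $(J(\mathbf{C})',j'_{\mathbf{C}})$, its image $j'_{\mathbf{C}}(\mathbf{C})$ necessarily generates $J(\mathbf{C})'$---otherwise $\mathbf{C}$ would embed in an elliptic curve and have genus one---so the universal property applied in both directions produces mutually inverse homomorphisms and the unique isomorphism $\mathbf{b}$ with $j'_{\mathbf{C}}=\mathbf{b}\circ j_{\mathbf{C}}$.

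Statements (3) and (4) are the Torelli theorem. The key geometric input is that a translate of $j_{\mathbf{C}}(\mathbf{C})$ is the theta divisor $\Theta$ of the principal polarization, with $\Theta\cong\mathbf{C}$. For (3), given the polarizing line bundle $\mathcal{O}_{\mathbf{C}}=\mathcal{O}_{J(\mathbf{C})}(\Theta)$, one has $h^0\big(J(\mathbf{C}),\mathcal{O}_{\mathbf{C}}\big)=1$, so $\Theta$ is the unique effective divisor in the associated linear system, and $\mathbf{C}$ is regained as $\Theta$ with its induced structure. For (4) only the numerical class $E=[\mathbf{C}]\in\mathrm{NS}(J(\mathbf{C}))$ is given; it determines the polarization up to translation, and any effective representative is a translate of $\Theta$, all of which are isomorphic to $\mathbf{C}$, so $\mathbf{C}$ is recovered up to this harmless ambiguity.

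I expect the main obstacle to lie in the irreducibility underlying (3) and (4): one must know that for a genus-two curve the principal polarization is indecomposable, so that $\Theta$ is a smooth irreducible curve rather than a union of two elliptic curves meeting at a point. This follows from the irreducibility of $\mathbf{C}$ together with the fact that $j_{\mathbf{C}}(\mathbf{C})$ generates $J(\mathbf{C})$, and it is precisely the dichotomy---Jacobian of a genus-two curve versus product of two elliptic curves---that governs the Kummer surfaces appearing later in the paper.
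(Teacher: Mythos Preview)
Your argument is correct and well-organized: adjunction with $K=0$ for part~(1), the Albanese universal property for part~(2), and the identification of the theta divisor with the Abel--Jacobi image for parts~(3) and~(4) are exactly the right ingredients, and your remark about indecomposability of the polarization is the essential point that makes~(3) and~(4) go through.

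However, there is nothing to compare against: the paper does not prove this theorem. It is stated as a well-known result and attributed to Nikulin via the citation~\cite{Nikulin}; the surrounding text says ``The following well-known theorem \cite{Nikulin} explains the properties of the Jacobian variety'' and then simply moves on to use it. So your proposal is not a different route from the paper's proof---it is a proof where the paper gives none. If your aim was to supply what the paper omits, you have done so; if your aim was to reconstruct the paper's own argument, be aware that no such argument appears in the text.
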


\begin{definition}
A curve $\mathbf{C}$ of genus-two is called generic if $\mathrm{NS}(J(\mathbf{C}))=\mathbb{Z}[\mathbf{C}]$.
\end{definition}

Let us briefly review a well-known geometric construction of the Kummer surface of the Jacobian variety of a curve of genus two.
Let $\mathrm{K}$ be a quartic surface in $\mathbb{CP}^3$ with $16$ ordinary double points on it and $\widehat{\mathrm{K}}$
its minimal resolution. These singular points are called nodes. There are $16$ hyperplanes in $\mathbb{CP}^3$ which 
are tangent to $\mathrm{K}$ along a smooth conic. Such a conic is called a trope. Each trope passes through
exactly six nodes and each node lies on exactly six tropes. This configuration of nodes and tropes is called the
$(16,6)$-configuration. We will use the notation of \cite[Sec. 4]{Kumar}.

We refer to the $16$ non-singular rational curves on $\widehat{\mathrm{K}}$ lying over the $16$ nodes
and the transforms of the $16$ tropes of $\mathrm{K}$ as nodes and tropes of $\widehat{\mathrm{K}}$ respectively.
A Kummer quartic $\mathrm{K}$ in $\mathbb{CP}^3$ can be realized as
\begin{equation}
\label{kummer}
 K(z_1,z_2,z_3,z_4)= K_2 z_4^2 + K_1 z_4 + K_0 = 0
\end{equation}
with the coefficients $K_0, K_1, K_2$ given in terms of six parameters $\theta_i$ and $z_1, z_2, z_3$.
We will use the standard notation for the labels of the $16$ nodes $p_0, \, p_{ij}$ with $1\le i < j \le 6$
and tropes $T_i, T_{ijk}$ with $1\le i < j < k \le 6$
(as described in \cite[Sec. 4]{Kumar}). The node $p_0$ is then located at $[0:0:0:1]$. 
Each of the tropes $T_i$ contains the $6$ nodes $p_0$ and $p_{ij}$ and is given by
\begin{equation}
\label{tropes}
 T_i: \theta_i^2 z_1 - \theta_i  z_2 + z_3 =0\;.
\end{equation}
The remaining $10$ tropes $T_{ijk}$ correspond to partitions of $\{1,\dots,6\}$ into two sets of three.
They contain the nodes with two-figure symbols whose digits are out either of the two sets of three in all possible ways.

A double cover of a trope, branched along the six nodes it passes through, is a curve of genus two.
Throughout this article, we will take the genus-two curve $\mathbf{C}$ to be
\begin{equation}
\label{curve_2}
 \mathbf{C}: \; y^2 = f(x) = a_0 \prod_{i=1}^6 (x-\theta_i) \;.
\end{equation}
The Jacobian variety $J(\mathbf{C})$ of the curve $\mathbf{C}$ is birational to the symmetric product of two copies of $\mathbf{C}$, i.e., 
$$(\mathbf{C}\times\mathbf{C})/\{ \mathbb{I}, \pi \}$$ with
$\pi(x_1)=x_2$ and $\pi(y_1)=y_2$. Its function field is the sub-field of $\mathbb{C}[x_1,x_2,y_1,y_2]$ with $y_i^2 =f(x_i)$ for $i=1,2$ which is fixed under 
$\pi$.
The Kummer surface $\mathrm{Kum}(J(\mathbf{C}))$ is birational to the quotient $J(\mathbf{C})/\{ \mathbb{I}, -\mathbb{I} \}$ 
with $-\mathbb{I}(x_i)=x_i$ and $-\mathbb{I}(y_i)=-y_i$ for $i=1,2$. Its function
field is the sub-field of $\mathbb{C}[x_1,x_2,y_1,y_2]$ with $y_i^2 =f(x_i)$ for $i=1,2$ which is fixed under $\pi$ and $-\mathbb{I}$. 
Thus, the function field is generated by $\eta=y_1y_2/a_0$, $\xi=x_1x_2$, and $\zeta=x_1+x_2$ with the relation
\begin{equation}
\label{kummer2}
 \eta^2 = \prod_{i=1}^6 \left( \xi - \theta_i \, \zeta + \theta_i^2 \right) \;.
\end{equation}
By setting $\xi=\frac{z_3}{z_1}$, $\zeta=\frac{z_2}{z_1}$, $\eta=\frac{y}{2z_1^3}$, Equation (\ref{kummer2}) becomes
\begin{equation}
\label{quartic}
 y^2 = 4 \, T_1 T_2 T_3 T_4 T_5 T_6 \;,
\end{equation}
which is the same as Equation~(\ref{kummer}) after completing the square
\begin{equation}
\label{quartic2}
 (K_2 z_4 + \frac{K_1}{2})^2 = \frac{K_1^4}{4} - K_0 K_2  = 4 \, T_1 T_2 T_3 T_4 T_5 T_6
\end{equation}
and setting $y=K_2 z_4 + \frac{K_1}{2}$. Thus, the surface $\widehat{\mathrm{K}}$ is in fact isomorphic to the Kummer 
surface for the Jacobian variety of the curve $\mathbf{C}$ of genus $2$ defined by Equation~(\ref{curve_2}).

The N\'eron-Severi lattice of the non-singular Kummer surface $\widehat{\mathrm{K}}$ contains rational curves
coming from the nodes $p_0, p_{ij}$ and the tropes $T_i, T_{ijk}$. We will denote the lattice generated by these curves as $\mathrm{S}$.
$\mathrm{S}$ is the even lattice of signature $(1,16)$ and $\det(\mathrm{S})=2^6$ described in \cite[Def. 6.2]{Nikulin}.
We have the following theorem:

\begin{theorem}[Nikulin \cite{Nikulin}]
A K3 surface $\mathbf{Y}$ is the Kummer surface of the Jacobian variety of a generic curve $\mathbf{C}$ of genus two 
if and only if $\mathrm{NS}(\mathbf{Y}) \cong \mathrm{S}$.
\end{theorem}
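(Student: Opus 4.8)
The plan is to obtain this statement by combining two pieces of Nikulin's lattice theory for K3 surfaces \cite{Nikulin}: the lattice-theoretic characterization of which K3 surfaces are Kummer surfaces, and the relation between $\mathrm{NS}$ and $\mathrm{T}$ of a Kummer surface and those of the underlying Abelian surface. Write $\Pi$ for the \emph{Kummer lattice}, the negative-definite even lattice of rank $16$ and $|\det|=2^6$ generated over $\mathbb{Z}$ by sixteen pairwise-orthogonal $(-2)$-classes $e_1,\dots,e_{16}$ together with $\frac{1}{2}\sum_i e_i$; inside $\mathrm{S}$ it is the primitive closure of the span of the node classes $p_0, p_{ij}$. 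Nikulin's criterion states that a K3 surface $\mathbf{Y}$ is the Kummer surface $\mathrm{Kum}(\mathbf{Z})$ of an Abelian surface $\mathbf{Z}$ if and only if $\mathrm{NS}(\mathbf{Y})$ admits a primitive embedding of $\Pi$ whose generators $e_i$ are represented by disjoint smooth rational curves; contracting those curves and passing to the étale double cover recovers $\mathbf{Z}$, and one has the Hodge isometry $\mathrm{T}_{\mathbf{Y}}\cong\mathrm{T}_{\mathbf{Z}}(2)$ together with $\rho(\mathbf{Y})=16+\rho(\mathbf{Z})$.

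\smallskip\noindent\textbf{($\Rightarrow$)} Assume $\mathbf{Y}=\mathrm{Kum}(J(\mathbf{C}))$ with $\mathbf{C}$ generic, so $\mathrm{NS}(J(\mathbf{C}))=\mathbb{Z}[\mathbf{C}]\cong\langle 2\rangle$ and $\rho(J(\mathbf{C}))=1$. The sixteen nodes and the sixteen tropes of $\widehat{\mathrm{K}}\cong\mathbf{Y}$ are algebraic curves, and by the combinatorics of the $(16,6)$-configuration recalled in Section~\ref{Kummer_surfaces} they span a sublattice of $\mathrm{NS}(\mathbf{Y})$ isometric to $\mathrm{S}$; this is the defining description of $\mathrm{S}$. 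Since $\rho(\mathbf{Y})=16+\rho(J(\mathbf{C}))=17=\mathrm{rank}\,\mathrm{S}$, the embedding $\mathrm{S}\hookrightarrow\mathrm{NS}(\mathbf{Y})$ has finite index. To see the index is $1$, compare determinants: as $H^2(\mathbf{Y},\mathbb{Z})$ is unimodular one has $|\det\mathrm{NS}(\mathbf{Y})|=|\det\mathrm{T}_{\mathbf{Y}}|$, and since $\mathrm{T}_{\mathbf{Y}}\cong\mathrm{T}_{J(\mathbf{C})}(2)$ with $\mathrm{T}_{J(\mathbf{C})}=\langle 2\rangle^{\perp}_{\mathrm{H}^3}\cong\mathrm{H}^2\oplus\langle-2\rangle$ (the orthogonal complement of a primitive $\langle 2\rangle$ in the unimodular lattice $\mathrm{H}^3$, hence $|\det|=2$), we get $|\det\mathrm{T}_{\mathbf{Y}}|=2^5\cdot 2=2^6=|\det\mathrm{S}|$. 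Hence $[\mathrm{NS}(\mathbf{Y}):\mathrm{S}]^2=\det\mathrm{S}/\det\mathrm{NS}(\mathbf{Y})=1$, so $\mathrm{NS}(\mathbf{Y})\cong\mathrm{S}$.

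\smallskip\noindent\textbf{($\Leftarrow$)} Assume $\mathrm{NS}(\mathbf{Y})\cong\mathrm{S}$. Then $\Pi$ embeds primitively into $\mathrm{NS}(\mathbf{Y})$, and after acting by a suitable element of the group generated by reflections in the $(-2)$-classes of $\mathrm{NS}(\mathbf{Y})$ the generators $e_i$ become classes of pairwise disjoint smooth rational curves, so Nikulin's criterion yields $\mathbf{Y}\cong\mathrm{Kum}(\mathbf{Z})$ for an Abelian surface $\mathbf{Z}$. Now $\rho(\mathbf{Z})=\rho(\mathbf{Y})-16=1$, so $\mathrm{NS}(\mathbf{Z})\cong\langle 2m\rangle$ for some $m\ge 1$ (it is even, of rank one, and positive definite), and $\mathrm{T}_{\mathbf{Z}}=\mathrm{NS}(\mathbf{Z})^{\perp}_{\mathrm{H}^3}$ satisfies $|\det\mathrm{T}_{\mathbf{Z}}|=2m$. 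From $\mathrm{T}_{\mathbf{Y}}\cong\mathrm{T}_{\mathbf{Z}}(2)$ and $|\det\mathrm{T}_{\mathbf{Y}}|=|\det\mathrm{NS}(\mathbf{Y})|=|\det\mathrm{S}|=2^6$ we obtain $2^5\cdot 2m=2^6$, hence $m=1$ and $\mathrm{NS}(\mathbf{Z})\cong\langle 2\rangle$: thus $\mathbf{Z}$ is a principally polarized Abelian surface of Picard rank one. A principally polarized Abelian surface is either the Jacobian of a smooth genus-two curve or a product of two elliptic curves with the product polarization, and the latter has Picard rank at least two; therefore $\mathbf{Z}=J(\mathbf{C})$ for a smooth curve $\mathbf{C}$ of genus two, and $\mathrm{NS}(J(\mathbf{C}))=\langle 2\rangle=\mathbb{Z}[\mathbf{C}]$ means $\mathbf{C}$ is generic. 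By Theorem~\ref{Nikulin} the curve $\mathbf{C}$ is uniquely recovered from $(J(\mathbf{C}),[\mathbf{C}])$, so $\mathbf{Y}$ is the Kummer surface of a well-defined generic genus-two curve.

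\smallskip
The step I expect to be the main obstacle is upgrading the abstract lattice isomorphism $\mathrm{NS}(\mathbf{Y})\cong\mathrm{S}$ in the $(\Leftarrow)$ direction to the \emph{geometric} statement that the sixteen node classes can be realized simultaneously by pairwise disjoint smooth rational curves on $\mathbf{Y}$, so that Nikulin's criterion actually applies; this rests on the surjectivity of the period map, the Torelli theorem, and the Riemann--Roch/Weyl-group analysis of $(-2)$-classes on a K3 surface. The remaining ingredients --- the determinant bookkeeping pinning down $\mathrm{NS}(\mathbf{Z})\cong\langle 2\rangle$, and excluding the split principal polarization by the Picard-rank-one hypothesis --- are routine.
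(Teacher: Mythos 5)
The paper does not actually prove this statement: it is quoted from Nikulin's Torelli-type paper \cite{Nikulin}, where $\mathrm{S}$ is the lattice of \cite[Def.~6.2]{Nikulin} with signature $(1,16)$ and determinant $2^6$. So there is no internal proof to compare against; what you have written is a reconstruction of the standard argument, and in outline it is sound. Your forward direction (the index computation $|\det\mathrm{T}_{\mathbf{Y}}|=2^5\cdot|\det\mathrm{T}_{J(\mathbf{C})}|=2^6=|\det\mathrm{S}|$, forcing $[\mathrm{NS}(\mathbf{Y}):\mathrm{S}]=1$) and your converse (Nikulin's Kummer criterion, then $\rho(\mathbf{Z})=1$, $\mathrm{NS}(\mathbf{Z})\cong\langle 2\rangle$, and exclusion of the split principal polarization) are the usual route. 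Be aware, though, that this is not more elementary than the citation: the two inputs you rely on --- the relations $\mathrm{T}_{\mathrm{Kum}(\mathbf{Z})}\cong\mathrm{T}_{\mathbf{Z}}(2)$, $\rho(\mathrm{Kum}(\mathbf{Z}))=16+\rho(\mathbf{Z})$, and the criterion that a primitive embedding of the Kummer lattice $\Pi$ into $\mathrm{NS}$ forces a K3 surface to be a Kummer surface --- are themselves theorems of Nikulin of comparable depth, and you correctly identify the second one (Torelli, surjectivity of the period map, Weyl-group action producing the sixteen disjoint nodal curves) as the real content rather than proving it.

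Two concrete corrections. First, your parenthetical description of $\Pi$ is wrong: the lattice generated by $e_1,\dots,e_{16}$ together with the single class $\tfrac12\sum_i e_i$ has determinant $2^{14}$, not $2^6$; the genuine Kummer lattice contains $\bigoplus_i\mathbb{Z}e_i$ with index $2^5$, the extra classes being the half-sums $\tfrac12\sum_{i\in W}e_i$ over the affine hyperplanes $W\subset\mathbb{F}_2^4$ (together with the total half-sum). Since the rest of your argument only uses the correct characterization of $\Pi$ as the primitive closure of the node classes, the proof survives, but as stated your generators contradict your determinant. Second, in the $(\Leftarrow)$ direction you assert without comment that $\Pi$ embeds primitively in the abstract lattice $\mathrm{S}$; this is true (for instance it follows from the forward direction applied to one honest Jacobian Kummer surface, where $\Pi\subset\mathrm{NS}=\mathrm{S}$ by the Lefschetz theorem), but it needs at least a sentence, since it amounts to the nontrivial fact that the hyperplane half-sums are integral combinations of nodes and tropes, which the numerical data $\operatorname{rank}\mathrm{S}=17$, $\det\mathrm{S}=2^6$ alone does not give. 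Similarly, ``$\mathrm{NS}(\mathbf{Z})\cong\langle 2\rangle$ implies principally polarized'' uses that $\pm$ the generator is ample on an abelian surface; a word of justification would round this off.
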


\subsection{Jacobian surfaces}
We want to restrict ourselves further to Jacobian elliptic K3 surfaces, i.e., elliptic surfaces with a section. 
For elliptically fibered surfaces with a section, the two classes in 
$\mathrm{NS}(\mathbf{Y})$ associated with the elliptic fiber and section span a sub-lattice $\mathcal{H}$ isometric to the standard hyperbolic lattice $\mathrm{H}$
of rank two. The sub-lattice $\mathcal{H} \subset \mathrm{NS}(\mathbf{Y})$ completely determines
the elliptic fibration with section on $\mathbf{Y}$. In fact, on a given K3 surface $\mathbf{Y}$ there is a one-to-one correspondence between sub-lattices $\mathcal{H} \subset \mathrm{NS}(\mathbf{Y})$ isometric to the standard hyperbolic lattice $\mathrm{H}$ that contain a pseudo-ample class and elliptic structures with section on $\mathbf{Y}$ which realize $\mathcal{H}$ \cite[Thm. 2.3]{CD2}. 

The distinct ways up to isometries to embed the standard rank-$2$ hyperbolic lattice $\mathrm{H}$ isometrically $\mathrm{NS}(\mathbf{Y})$ are distinguished by the isomorphism type of the orthogonal complement $\mathcal{W}$ of $\mathcal{H}$,
such that the N\'eron-Severi lattice decomposes as a direct orthogonal sum
\begin{equation*}
 \mathrm{NS}(\mathbf{Y}) = \mathcal{H} \oplus \mathcal{W} \;,
\end{equation*}
The sub-lattice $\mathcal{W}^{\mathrm{root}} \subset \mathcal{W}$ is spanned by the roots, i.e., the algebraic classes of
self-intersection $-2$ inside $\mathcal{W}$. The singular fibers of the elliptic fibration determine $\mathcal{W}^{\mathrm{root}}$
uniquely up to permutation. Moreover, there exists a canonical group isomorphism \cite{Miranda}
\begin{equation}
 \mathcal{W}/\mathcal{W}^{\mathrm{root}} \stackrel{\sim} \longrightarrow \mathrm{MW}(\mathbf{Y}) \;,
\end{equation}
where $\mathrm{MW}(\mathbf{Y})$ is group of sections on $\mathbf{Y}$ compatible with its elliptic structure.
 
\section{Two elliptic fibrations of Picard rank $17$}
\label{K3families}
In this section, we will define two 3-parameter families $\mathbf{X}_{a,b,c}$ and $\mathbf{Y}_{a,b,c}$
of Jacobian elliptic K3 surfaces. We show that each surface $\mathbf{X}_{a,b,c}$ has a Nikulin involution 
whose quotient is $\mathbf{Y}_{a,b,c}$.
 
\subsection{The family $\mathbf{Y}_{a,b,c}$}
\noindent
A $3$-parameter family of Jacobian elliptic surfaces $\mathbf{Y}_{a,b,c} \;$ over $\mathbb{CP}^1$ is defined by
\begin{equation}
\label{WE}
\mathbf{Y}_{a,b,c} = \left\lbrace (Y,X,t) \in \mathbb{C}^3 \mid Y^2 = 4 \, X^3 - g_2 \, X - g_3 \right\rbrace \;.
\end{equation}
Here, $t$ is the affine coordinate on the base curve $\mathbb{CP}^1$ and
\begin{eqnarray*}
g_2  & = & \; \frac{1}{3} \; p^2(t) + r^4(t) \;,\\
g_3  & = & \frac{1}{27} \; p(t) \, \Big(p(t) - 3  \, r^2(t) \Big) \, \Big(p(t) + 3 \, r^2(t) \Big) \;,
\end{eqnarray*}
and the discriminant $\Delta = g_2^3 -27 \, g_3^2$ equals
\begin{eqnarray}
\label{Ydiscriminant}
\Delta  & = & r^4(t)  \, \Big(p(t) - r^2(t)\Big)^2 \, \Big(p(t) + r^2(t)\Big)^2 \;.
\end{eqnarray}
The degrees of the polynomials $g_2$ and $g_3$ make $\mathbf{Y}_{a,b,c}$ a family of K3 surfaces.
A holomorphic symplectic two-form is $\omega = dt\wedge dX/Y$.
By setting $X = \frac{1}{4} x + \frac{1}{3} P$ and $Y=\frac{1}{4}y$, the Weierstrass equation in (\ref{WE}) becomes
\begin{equation*}
\label{we}
y^2 =  x \;  \Big(x + 2 \, p_+(t) \Big) \Big(x + 2\, p_-(t)\Big) \;,
\end{equation*}
whence the Mordell-Weil group is $\mathrm{MW}(\mathbf{Y})=(\mathbb{Z}_2)^2$.
Under $t \mapsto -t$ and $b \mapsto -b$, $c \mapsto -c$, we have that 
$g_2 \mapsto  g_2$ and $g_3 \mapsto  - g_3$. Hence, we have
\begin{equation}
 \mathbf{Y}_{a,b,c} \cong  \mathbf{Y}_{a,-b,-c} \;.
\end{equation}
Notice that the positions of the roots in Equations (\ref{Xdiscriminant}) and (\ref{Ydiscriminant}) 
coincide. Thus, the notion of genericity of $\mathbf{Y}_{a,b,c}$ and $\mathbf{X}_{a,b,c}$ agrees.
For generic values of $a, b, c$, the surfaces $\mathbf{Y}_{a,b,c}$ have the following 
configuration of singular fibers:

\bigskip

\begin{tabular}{l|c|c|c|c|c|l}
locus   & number of points & $\nu(g_2)$ & $\nu(g_3)$ & $\nu(\Delta)$ & Kodaira type of fiber & $\mathcal{W}^{\mathrm{root}}$ \\
\hline
$r = 0$      & $1$ & $0$ & $0$ & $4$ & $I_4$  & $\mathrm{A}_3$ \\
$p_+ =0$ & $3$ & $0$ & $0$ & $2$ & $I_2$      & $\mathrm{A}_1$ \\
$p_- =0$ & $3$ & $0$ & $0$ & $2$ & $I_2$      & $\mathrm{A}_1$ \\
$t = \infty$ & $1$ & $2$ & $3$ & $8$ & $I_2^*$& $\mathrm{D}_6$ 
\end{tabular}

\bigskip

\noindent
In this case the N\'eron-Severi lattice $\mathrm{NS}(\mathbf{Y})$ has signature $(1,16)$ and the discriminant $(4)(2^6)(2\cdot 2)/4^2=2^6$.

\subsubsection{Coinciding roots}
By setting $t=\mu \,\tilde{t}$, $g_2 = \lambda^4 \, \tilde{g}_2$, $g_3 = \lambda^6\, \tilde{g}_3$,
$p = \lambda^2 \, \tilde{p}$, $r = \lambda \, \tilde{r}$ with $\mu^3=c^2$, $\lambda=-c$, we obtain
\begin{equation*}
\tilde{p} =  4 \, \tilde{t}^{\, 3} - 3  \,  \tilde{a}  \, \tilde{t} - \tilde{b} \,, \quad
\tilde{r} =  1 - \tilde{c} \, \tilde{t}\;, \quad \tilde{\Delta}  =  \tilde{r}^4\left(\tilde{t}\,\right)  \, \Big(\tilde{p}^2\left(\tilde{t}\,\right) 
- \tilde{r}^4\left(\tilde{t}\,\right)\Big)^2 \;,
\end{equation*}
with
\begin{equation*}
\tilde{a} =  \frac{a}{\mu^2} \;, \quad
\tilde{b} =  \frac{b}{c^2} \;, \quad
\tilde{c} =  \frac{\mu}{c} \;.
\end{equation*}
The case $\tilde{c}=0$ was considered by Clingher and Doran \cite{CD}: for generic values $\tilde{a}, \tilde{b}$ the surface $\mathbf{\tilde{Y}}_{\tilde{a},\tilde{b},0}$ has the singular fibers
$I_6^* \oplus 6 \, I_2$ and $\mathrm{MW}(\mathbf{\tilde{Y}})=(\mathbb{Z}_2)^2$. This configuration was denoted by $\mathfrak{J}_5$ 
in the Oguiso classification of Jacobian fibrations on Kummer surfaces of the product of nonisogenous elliptic curves \cite{Oguiso}. 
The N\'eron-Severi lattice $\mathrm{NS}(\mathbf{\tilde{Y}})$ has signature $(1,17)$ and discriminant $(2\cdot 2)(2^6)/4^2=2^4$.

When $c$ coincides with one of the roots $t_i$ of $p_{\pm}$, the surface $\mathbf{Y}_{a,b,c}$ has the singular fibers
$2 \, I_2^* \oplus 4 \, I_2$ and $\mathrm{MW}(\mathbf{Y})=(\mathbb{Z}_2)^2$. This configuration was denoted by $\mathfrak{J}_6$
in the Oguiso classification of Jacobian fibrations on Kummer surfaces of the product of nonisogenous elliptic curves. 
The N\'eron-Severi lattice $\mathrm{NS}(\mathbf{Y})$ has signature $(1,17)$ and discriminant $(2\cdot 2)^2(2^4)/4^2=2^4$. 

\subsection{The family $\mathbf{X}_{a,b,c}$}
\noindent
A second $3$-parameter family of Jacobian elliptic surfaces $\mathbf{X}_{a,b,c} \;$ over $\mathbb{CP}^1$ is defined by
\begin{equation}
\label{WE2}
\mathbf{X}_{a,b,c} = \left\lbrace (\hat{Y},\hat{X},t) \in \mathbb{C}^3 \mid \hat{Y}^2 = 4 \, \hat{X}^3 - \hat{g}_2 \, \hat{X} - \hat{g}_3 \right\rbrace \;,
\end{equation}
Here, $t$ is the affine coordinate on the base curve $\mathbb{CP}^1$ and
\begin{eqnarray*}
\hat{g}_2     & = & \; \frac{1}{3} \, p^2(t) - \frac{1}{4}\, r^4(t) \;,\\
\hat{g}_3     & = & \frac{1}{216} \, p(t) \, \Big( 8 \, p^2(t) - 9 \, r^4(t)  \Big) \;,
\end{eqnarray*}
and discriminant $\hat{\Delta} = \hat{g}_2^3 -27 \, \hat{g}_3^2$ with
\begin{eqnarray}
\label{Xdiscriminant}
\hat{\Delta}  & = & \frac{1}{64} \, r^8(t)  \, \Big(p(t) - r^2(t)\Big) \, \Big(p(t) + r(t)^2\Big) \;.
\end{eqnarray}
The degree of the polynomials $\hat{g}_2$ and $\hat{g}_3$ make $\mathbf{X}_{a,b,c}$ a family of K3 surfaces.
A holomorphic symplectic two-form is $\hat{\omega} =  dt\wedge d\hat{X}/\hat{Y}$.
By choosing $\hat{X}= \frac{1}{4} \hat{x} - \frac{1}{6} \, P$ and $\hat{Y} = \frac{1}{4} \, \hat{y}$, 
the Weierstrass equation in (\ref{WE2}) becomes
\begin{equation*}
  \hat{y}^2 = \hat{x} \, \Big(\hat{x}^2 - 2 \, p(t) \, \hat{x} + r^4(t) \Big) \;,
\end{equation*} 
whence the Mordell-Weil group is $\mathrm{MW}(\mathbf{X})=\mathbb{Z}_2$.
The neutral element  and a two-torsion section are given by  $\hat{O}: (\hat{Y},\hat{X})=(0,0)$ and $\hat{\sigma}: (\hat{Y},\hat{X})=(0,-p(t)/6)$, respectively.
Under $t \mapsto -t$ and $b \mapsto -b$, $c \mapsto -c$, we have that 
$\hat{g}_2 \mapsto  \hat{g}_2$ and $\hat{g}_3 \mapsto  - \hat{g}_3$. Hence we have
\begin{equation}
 \mathbf{X}_{a,b,c} \cong  \mathbf{X}_{a,-b,-c} \;.
\end{equation}
For generic values of $a, b, c$, the surface $\mathbf{X}_{a,b,c}$ has the following 
configuration of singular fibers:

\bigskip

\begin{tabular}{l|c|c|c|c|c|l}
locus   & number of points & $\nu(g_2)$ & $\nu(g_3)$ & $\nu(\Delta)$ & Kodaira type of fiber & $\mathcal{W}^{\mathrm{root}}$ \\
\hline
$r = 0$      & $1$ & $0$ & $0$ & $8$  & $I_8$  & $\mathrm{A}_7$ \\
$p_+ =0$ & $3$ & $0$ & $0$ & $1$  & $I_1$  & $-$ \\
$p_- =0$ & $3$ & $0$ & $0$ & $1$  & $I_1$  & $-$ \\
$t = \infty$ & $1$ & $2$ & $3$ & $10$ & $I_4^*$& $\mathrm{D}_8$ 
\end{tabular}

\bigskip

\noindent
We have the following lemma:
\begin{lemma}
\label{parts1}
The Jacobian elliptic surface $\mathbf{X}_{a,b,c} \to \mathbb{CP}^1$ 
is a  K3 surface with a $ \langle 8 \rangle  \oplus \mathrm{E}_8  \oplus \mathrm{E}_8$-lattice polarization.
\end{lemma}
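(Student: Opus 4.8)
The plan is to verify directly that $\mathbf{X}_{a,b,c}$ satisfies the two hallmarks of a K3 surface polarized by $\mathrm{H}\oplus\mathrm{E}_8\oplus\mathrm{A}_7$: first, that the total space (after resolution and compactification) is a K3 surface, and second, that its N\'eron-Severi lattice contains a primitive copy of $\mathrm{H}\oplus\mathrm{E}_8\oplus\mathrm{A}_7$ containing a pseudo-ample class. The first point is essentially already in hand: the degrees $\deg\hat g_2 \le 8$ and $\deg\hat g_3 \le 12$ (here $\deg p = 3$, $\deg r = 1$, so $\deg \hat g_2 = 6$ and $\deg \hat g_3 = 9$, but one must check the fiber at $t=\infty$), together with the fact that the generic fiber is an elliptic curve, force the relatively minimal smooth model to be a Jacobian elliptic K3 surface; I would invoke the standard criterion that a Weierstrass fibration over $\mathbb{CP}^1$ with $\nu_\infty(\hat g_2)\ge 0$, $\nu_\infty(\hat g_3)\ge 0$ after the coordinate change $t = 1/s$, $\hat X = s^{-4}\hat X'$, $\hat Y = s^{-6}\hat Y'$ and minimal Weierstrass data of appropriate total degree yields a K3. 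The table of singular fibers already displayed ($I_8$ at $r=0$, six $I_1$'s, $I_4^*$ at $t=\infty$) has Euler numbers summing to $8 + 6 + 10 = 24$, which both confirms the K3 condition and identifies the reducible fibers.

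Next I would read off the lattice $\mathcal{W}^{\mathrm{root}}$ from the reducible fibers: the $I_8$ fiber at $r=0$ contributes $\mathrm{A}_7$, the $I_4^*$ fiber at $t=\infty$ contributes $\mathrm{D}_8$, and the six $I_1$'s contribute nothing. By the Shioda-Tate formula, $\rho(\mathbf{X}_{a,b,c}) = 2 + \operatorname{rank}\mathcal{W}^{\mathrm{root}} + \operatorname{rank}\mathrm{MW}(\mathbf{X}) = 2 + (7+8) + 0 = 17$ for generic parameters (the torsion section $\hat\sigma$ of order two contributes to the torsion of $\mathrm{MW}$ but not its rank). Thus $\mathrm{NS}(\mathbf{X}_{a,b,c})$ has rank $17$ and, from the hyperbolic plane $\mathcal{H}$ spanned by fiber and zero-section together with the fiber components, contains $\mathrm{H}\oplus\mathrm{A}_7\oplus\mathrm{D}_8$ as a finite-index sublattice. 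The embedding $\mathrm{H}\oplus\mathrm{E}_8\oplus\mathrm{A}_7 \hookrightarrow \mathrm{NS}(\mathbf{X}_{a,b,c})$ I would produce by the standard trick of enlarging $\mathrm{A}_7\oplus\mathrm{D}_8$ using the torsion section: the two-torsion section $\hat\sigma$ glues the nontrivial discriminant-form elements of the $\mathrm{A}_7$ and $\mathrm{D}_8$ lattices via the correspondence $\mathcal{W}/\mathcal{W}^{\mathrm{root}} \cong \mathrm{MW}(\mathbf{X})$, producing an overlattice that, combined with the hyperbolic plane, contains $\mathrm{H}\oplus\mathrm{E}_8\oplus\mathrm{A}_7$ primitively; alternatively one checks directly that $\mathrm{E}_8 \supset \mathrm{D}_8$ as the index-two overlattice determined by a half-spinor class, and that this extra class is realized in $\mathrm{NS}$ precisely because of the two-torsion section. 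A discriminant computation — $\operatorname{disc}(\mathrm{H}\oplus\mathrm{E}_8\oplus\mathrm{D}_8) $ versus $\operatorname{disc}(\mathrm{H}\oplus\mathrm{E}_8\oplus\mathrm{A}_7)$, with the index-two enhancement absorbing the factor — confirms that the copy of $\mathrm{H}\oplus\mathrm{E}_8\oplus\mathrm{A}_7$ sits primitively inside $\mathrm{NS}(\mathbf{X}_{a,b,c})$.

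The main obstacle, I expect, is not the lattice bookkeeping but the careful verification of the singular fiber at $t=\infty$. One must perform the Weierstrass normalization at infinity, determine the precise Kodaira type by Tate's algorithm (checking valuations of $\hat g_2$, $\hat g_3$, $\hat\Delta$ and the relevant splitting conditions), and confirm it is $I_4^*$ rather than some neighboring type; only then is the $\mathrm{D}_8$ contribution justified, and only then does the Euler-number count close at $24$. A secondary subtlety is confirming that the $I_1$ fibers are genuinely irreducible nodal — i.e. that the six roots of $p_+$ and $p_-$ are simple and distinct and that $\hat g_2, \hat g_3$ do not simultaneously vanish there — which is exactly the genericity hypothesis; for non-generic parameters the fiber configuration degenerates and the Picard rank jumps, as already noted in the discussion of coinciding roots, so the statement is only claimed for the generic locus. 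Once the fiber at infinity and the $I_8$ at $r=0$ are pinned down, the polarization statement follows formally, and I would assemble these pieces into the proof of Lemma~\ref{parts1}.
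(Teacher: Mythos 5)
Your proposal is correct and follows essentially the same route as the paper: the degrees of $\hat g_2,\hat g_3$ give the K3 condition, the fiber configuration $I_8\oplus 6\,I_1\oplus I_4^*$ together with $\mathrm{MW}(\mathbf{X})=\mathbb{Z}_2$ gives $\mathcal{H}\oplus\mathrm{A}_7\oplus\mathrm{D}_8$ of rank $17$ up to index two, and the two-torsion section supplies the index-two overlattice identified via Nikulin's discriminant-form calculus (the paper carries out exactly this gluing, $\mathrm{D}_8^+=\mathrm{E}_8$, in Appendix~\ref{discriminant2}, concluding $\mathrm{NS}(\mathbf{X})\cong\mathrm{H}\oplus\mathrm{E}_8\oplus\mathrm{A}_7$). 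Your second formulation of the gluing (the half-spinor class of $\mathrm{D}_8$ realized by the torsion section) is precisely the paper's choice of isotropic subgroup, so there is no essential difference in method.
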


\begin{proof}
The degree of the polynomials $\hat{g}_2$ and $\hat{g}_3$ make $\mathbf{X}_{a,b,c}$ a family of K3 surfaces.
For generic values of $a, b, c$,
the N\'eron-Severi lattice $\mathrm{NS}(\mathbf{X})$ has signature $(1,16)$ and discriminant 
$(2\cdot 2) \, (8)/4=2^3$. In fact, we show in Appendix \ref{discriminant2} that in this case $\mathrm{NS}(\mathbf{X}) \cong \langle 8 \rangle  \oplus \mathrm{E}_8  \oplus \mathrm{E}_8$ and
$\mathrm{T}_{\mathbf{X}} =  \mathrm{H}^2 \oplus \langle -8\rangle$. 
\end{proof}

\subsubsection{Coinciding roots}
\label{coinciding_roots}
In the case where $\tilde{c}=0$ but $\tilde{a}, \tilde{b}$ are generic, the configuration of singular fibers becomes $I_{12}^* \oplus 6 \, I_1$.
The N\'eron-Severi lattice $\mathrm{NS}(\mathbf{\tilde{X}})$ has signature $(1,16)$ and discriminant 
$(2\cdot 2)/4=1$. In fact, we show $\mathrm{NS}(\mathbf{\tilde{X}}) \cong \mathrm{H} \oplus \mathrm{D}_{16}^+$
and $\mathrm{T}_{\mathbf{\tilde{X}}} =  \mathrm{H}^2 $ in Appendix \ref{discriminant1}. In fact, it was shown in \cite{CD} that
$\tilde{\mathbf{X}}$ is polarized by the unimodular rank-eighteen lattice $\mathrm{H} \oplus \mathrm{E}_8 \oplus \mathrm{E}_8$.

\subsection{The Nikulin involution}
\label{VGSinvolution}
The relation between the K3 surfaces $\mathbf{X}_{a,b,c}$ and $\mathbf{Y}_{a,b,c}$ is given by
the following lemma:
\begin{lemma}
\label{parts2}
The Jacobian elliptic K3 surface $\mathbf{X}_{a,b,c}$ admits a Van Geemen-Sarti involution such that
the induced quotient map is a rational double cover of $\mathbf{Y}_{a,b,c}$.
\end{lemma}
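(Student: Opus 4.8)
The plan is to exhibit the Van Geemen-Sarti involution explicitly using the two-torsion section $\hat\sigma$ already identified on $\mathbf{X}_{a,b,c}$, and then to compute the quotient Weierstrass model directly, verifying that it matches the defining equation of $\mathbf{Y}_{a,b,c}$. First I would recall that the Weierstrass form of $\mathbf{X}_{a,b,c}$ can be written, after the substitution $\hat X = \tfrac14 \hat x - \tfrac16 P$, $\hat Y = \tfrac14 \hat y$, as $\hat y^2 = \hat x (\hat x^2 - 2 p(t)\hat x + r^4(t))$, with neutral section $\hat O$ at infinity (or at $(\hat x,\hat y)=(0,0)$ in the shifted model) and the explicit two-torsion section $\hat\sigma\colon (\hat Y,\hat X)=(0,-p(t)/6)$. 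Since $\hat O$ and $\hat\sigma$ are disjoint sections of the elliptic fibration $\varphi_{\mathbf{X}}\colon \mathbf{X}_{a,b,c}\to\mathbb{CP}^1$ and $\hat\sigma$ has order two in $\mathrm{MW}(\mathbf{X})=\mathbb Z_2$, fiber-wise translation by $\hat\sigma$ extends to a well-defined involution $\imath$ on $\mathbf{X}_{a,b,c}$; this is a Van Geemen-Sarti involution provided $\imath^*\hat\omega = \hat\omega$, which holds because translation on each smooth fiber preserves the fiberwise invariant holomorphic one-form $d\hat X/\hat Y$, hence preserves $\hat\omega = dt\wedge d\hat X/\hat Y$. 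Thus $\imath$ is a Nikulin involution in the sense of the earlier definitions.

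The second step is to identify the quotient. For a Weierstrass curve $\hat y^2 = \hat x(\hat x^2 + A\hat x + B)$ with $A = -2p(t)$, $B = r^4(t)$, the classical two-isogeny with kernel generated by the two-torsion point at $\hat x = 0$ has as its target the curve $\tilde y^2 = \tilde x(\tilde x^2 - 2A\tilde x + (A^2 - 4B))$, and the corresponding quotient of the elliptic surface by translation by $\hat\sigma$ is exactly the Jacobian elliptic surface associated to this isogenous curve. Plugging in $A = -2p(t)$, $B = r^4(t)$ gives $\tilde x^2 + 4p(t)\tilde x + (4p^2(t) - 4r^4(t))$, i.e., after clearing the factor of $4$, the curve $\tilde y^2 = \tilde x(\tilde x + 2p_+(t))(\tilde x + 2p_-(t))$ since $p^2 - r^4 = p_+ p_-$ and $2p = p_+ + p_-$. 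This is precisely the model $y^2 = x(x + 2p_+(t))(x + 2p_-(t))$ obtained for $\mathbf{Y}_{a,b,c}$ via $X = \tfrac14 x + \tfrac13 P$, $Y = \tfrac14 y$. Converting back to $\hat g_2,\hat g_3$ and $g_2,g_3$ form and matching against the discriminants (\ref{Xdiscriminant}) and (\ref{Ydiscriminant}) then confirms that the minimal resolution of the quotient surface is the K3 surface $\mathbf{Y}_{a,b,c}$, and that the induced rational map $\mathrm{pr}\colon \mathbf{X}_{a,b,c}\dashrightarrow \mathbf{Y}_{a,b,c}$ is generically two-to-one, i.e., a rational double cover.

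Finally I would check that nothing goes wrong at the singular fibers: one verifies via Tate's algorithm (or by directly reading off the fiber configurations already tabulated, $I_8 \oplus 6I_1 \oplus I_4^*$ for $\mathbf{X}$ versus $I_4 \oplus 6I_2 \oplus I_2^*$ for $\mathbf{Y}$) that the two-isogeny is compatible with the Néron models fiberwise, so that the eight fixed points of $\imath$ lie in smooth fibers and the quotient has exactly eight $A_1$ singularities, whose resolution is $\mathbf{Y}_{a,b,c}$; the even-eight branch configuration consists of the eight components swapped by $\hat\sigma$. The main obstacle I anticipate is purely bookkeeping: keeping the three competing normalizations of the Weierstrass equation (the $(\hat g_2,\hat g_3)$-form, the shifted monic-cubic form, and the analogous pair for $\mathbf{Y}$) consistent through the isogeny formula, and in particular tracking the scaling factors of $4$ so that the target of the two-isogeny is literally $\mathbf{Y}_{a,b,c}$ and not a quadratic twist of it. Verifying the discriminant transforms correctly, $\hat\Delta \sim r^8 p_+ p_-$ mapping to $\Delta \sim r^4 p_+^2 p_-^2$ as expected for a two-isogeny, provides a useful consistency check on these constants.
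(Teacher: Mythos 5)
Your proposal is correct and takes essentially the same route as the paper: the involution is fiberwise translation by the two-torsion section $\hat{\sigma}$, and the quotient is identified with $\mathbf{Y}_{a,b,c}$ via the classical two-isogeny with kernel $\{\hat{O},\hat{\sigma}\}$ (the paper simply writes the isogeny maps $\jmath,\jmath^{\;\prime}$ explicitly, citing Husem\"oller, and checks $\jmath^*$ preserves the holomorphic two-form, rather than invoking the generic target formula $\tilde{y}^2=\tilde{x}(\tilde{x}^2-2A\tilde{x}+A^2-4B)$ as you do). One cosmetic slip: in the shifted model $(\hat{x},\hat{y})=(0,0)$ is the two-torsion section $\hat{\sigma}$ itself (it is the point $(\hat{Y},\hat{X})=(0,-p(t)/6)$), not an alternative description of the neutral section $\hat{O}$, which is the section at infinity; this does not affect your argument since you correctly take the kernel of the isogeny to be generated by the two-torsion point at $\hat{x}=0$.
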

\begin{proof}
The Jacobian elliptic surfaces $\mathbf{X}_{a,b,c} \to \mathbb{CP}^1$ and $\mathbf{Y}_{a,b,c}\to\mathbb{CP}^1$ are related by fiberwise two-isogeny.
To see this let $\hat{E}_t$ be the elliptic fiber of $\mathbf{X}_{a,b,c}$ over $t$, and  $E_t$ the fiber of $\mathbf{Y}_{a,b,c}$.
We define the maps $\jmath: \hat{E}_t \to E_t$ and $\jmath^{\; \prime}: E_t \to \hat{E}_t$ by setting
\begin{equation}
\label{transfoK3}
 \begin{array}{rrclcrcl}
 \jmath:      &  x       & = & \dfrac{\hat{y}^2}{\hat{x}^2}\;, & \; & y       &=& 
 \dfrac{\hat{y} \, (\hat{x}^2 - M^4)}{\hat{x}^2}\;, \\
 \jmath^{\; \prime}: &  \hat{x} & = & \dfrac{y^2}{4 \, x^2} \;,       & \; & \hat{y} &=& 
 \dfrac{y \, (x^2 - P^2 + 4 \, M)}{8 \, x^2} \;.
 \end{array} 
\end{equation}
If $\hat{E}_t$ is the elliptic fiber of $\mathbf{X}_{a,b,c}$ over $t$, and  $E_t$ the fiber of $\mathbf{Y}_{a,b,c}$, then
the map $\jmath$ corresponds to the two-isogeny $E_t=\hat{E}_t/\{\hat{O},\hat{\sigma}\}$ given by the sections $\hat{O}$ and $\hat{\sigma}$ that
defines the neutral element and a two-torsion point in the fiber $\hat{E}_t$, respectively. 
The computation is carried out in detail in \cite[Sec. 4.5]{Husemoeller}.
Hence, we obtain a rational quotient map of degree two
\begin{equation}
\label{K3_quotient_map}
 \mathrm{pr}: \mathbf{X}_{a,b,c} \dashrightarrow  \mathbf{Y}_{a,b,c} \;.
\end{equation}
One checks that $\jmath^*$ maps the holomorphic two-form $\hat{\omega}$ on $\mathbf{X}_{a,b,c}$
to $\omega$ on $\mathbf{Y}_{a,b,c}$ since 
\begin{equation*}
 \omega=dt \wedge \frac{dX}{Y} = dt \wedge \frac{dx}{y} = \jmath^*\Big( dt \wedge \frac{d\hat{x}}{\hat{y}} \Big) =  
 \jmath^* \Big( dt \wedge \frac{d\hat{X}}{\hat{Y}}\Big)=\jmath^*\hat{\omega}\;,
\end{equation*}
and similarly for $(\jmath^{\; \prime})^*$. Thus, $\imath=\jmath^{\; \prime}\circ\jmath$ is a Nikulin involution on $\mathbf{X}_{a,b,c}$. 
It follows that $(\mathbf{X}_{a,b,c} \to \mathbb{CP}^1, \hat{O}, \hat{\sigma})$ is a Van Geemen-Sarti involution compatible with the
Nikulin involution $\imath$.
\end{proof}

\section{The relation to the $\mathrm{SU}(2)$ Seiberg-Witten curves}
\label{SeibergWitten_curves}
In this section, we will define the Seiberg-Witten curves $\mathbf{X}_{\scriptscriptstyle \mathrm{SW}}$ and $\mathbf{Y}_{\scriptscriptstyle \mathrm{SW}}$ 
of $\mathcal{N}=2$ supersymmetric $\mathrm{SU}(2)$-Yang-Mills theory with $N_f=2$ hypermultiplets and $N_f=0$ respectively. We also show that the families of 
elliptic K3 surfaces $\mathbf{X}_{a,b,c}$, $\mathbf{Y}_{a,b,c}$, and the Nikulin involution $\imath$ are obtained from $\mathbf{X}_{\scriptscriptstyle \mathrm{SW}}$, $\mathbf{Y}_{\scriptscriptstyle \mathrm{SW}}$, and their two-isogeny respectively by a base change.

\subsection{The Seiberg-Witten curve for $N_f=2$.}
The Seiberg-Witten curve of $\mathcal{N}=2$ supersymmetric $\mathrm{SU}(2)$-Yang-Mills theory with $N_f=2$ hypermultiplets (see \cite{SW}) is the 
Jacobian elliptic surfaces $\mathbf{Y}_{\scriptscriptstyle \mathrm{SW}}$ over $\mathbb{CP}^1$ defined by
\begin{equation}
\label{WE3}
\mathbf{Y}_{\scriptscriptstyle \mathrm{SW}} = \left\lbrace \left(Y_{\scriptscriptstyle \mathrm{SW}},X_{\scriptscriptstyle \mathrm{SW}},u\right) \in \mathbb{C}^3 
\mid Y_{\scriptscriptstyle \mathrm{SW}}^2 
= 4 \, X_{\scriptscriptstyle \mathrm{SW}}^3 - g^{\scriptscriptstyle \mathrm{SW}}_2 \, X_{\scriptscriptstyle \mathrm{SW}} - g^{\scriptscriptstyle \mathrm{SW}}_3 \right\rbrace \;.
\end{equation}
Here, $u$ is the affine coordinate on the base curve $\mathbb{CP}^1$ and
\begin{eqnarray*}
g^{\scriptscriptstyle \mathrm{SW}}_2  & = & \frac{1}{3} \, u^2 + 1 \;,\\
g^{\scriptscriptstyle \mathrm{SW}}_3  & = & \frac{1}{27} \, u \, \Big(u^2 - 9 \Big) \;,\\
\Delta^{\scriptscriptstyle \mathrm{SW}}  & = & \, \Big(u^2 - 1 \Big)^2 \;.
\end{eqnarray*}
The degrees of the polynomials $g_2^{\scriptscriptstyle \mathrm{SW}}$ and $g_3^{\scriptscriptstyle \mathrm{SW}}$ make $\mathbf{Y}_{\scriptscriptstyle \mathrm{SW}}$ a rational elliptic surface.
A holomorphic symplectic two-form is given by $\omega_{\scriptscriptstyle \mathrm{SW}} = du\wedge dX_{\scriptscriptstyle \mathrm{SW}}/Y_{\scriptscriptstyle \mathrm{SW}}$.
By setting $X_{\scriptscriptstyle \mathrm{SW}} = \frac{1}{4} x_{\scriptscriptstyle \mathrm{SW}} + \frac{1}{3} u$ and $Y_{\scriptscriptstyle \mathrm{SW}}=\frac{1}{4}y_{\scriptscriptstyle \mathrm{SW}}$, the Weierstrass equation in (\ref{WE3}) becomes
\begin{equation*}
y_{\scriptscriptstyle \mathrm{SW}}^2 =  x_{\scriptscriptstyle \mathrm{SW}} \;  \left(x_{\scriptscriptstyle \mathrm{SW}} + 2 \, u - 2 \right) \left(x_{\scriptscriptstyle \mathrm{SW}} + 2\, u + 2\right) \;,
\end{equation*}
whence the Mordell-Weil group is $\mathrm{MW}(\mathbf{Y}_{\scriptscriptstyle \mathrm{SW}})=(\mathbb{Z}_2)^2$.
The surface $\mathbf{Y}_{\scriptscriptstyle \mathrm{SW}}$ has the following 
configuration of singular fibers:

\bigskip
\begin{center}
\begin{tabular}{l|c|c|c|c|l}
locus   & $\nu(g^{\scriptscriptstyle \mathrm{SW}}_2)$ & $\nu(g^{\scriptscriptstyle \mathrm{SW}}_3)$ & $\nu(\Delta^{\scriptscriptstyle \mathrm{SW}})$ & Kodaira type of fiber & $\mathcal{W}^{\mathrm{root}}$ \\
\hline
$u = 1$      & $0$ & $0$ & $2$  & $I_2$  & $\mathrm{A}_1$ \\
$u = -1$     & $0$ & $0$ & $2$  & $I_2$  & $\mathrm{A}_1$ \\
$u = \infty$ & $2$ & $3$ & $8$  & $I_2^*$& $\mathrm{D}_6$ 
\end{tabular}
\end{center}

\bigskip
The rational elliptic surface $\mathbf{Y}_{\scriptscriptstyle \mathrm{SW}}$ is in fact the modular elliptic surface over the base curve $\mathbb{H}/\Gamma(2)$ where $\mathbb{H}$ is the upper half-plane and $\Gamma(2)$ is the principal congruence subgroup of level $2$ in $\mathrm{SL}(2,\mathbb{Z})$.  The relation between the affine coordinate $u \in \mathbb{CP}^1$ and $\tau \in \mathbb{H}/\Gamma(2)$ is given by
\begin{equation}
\label{u_coordinate2}
 u = \dfrac{\vartheta_2^4\left(2\,\tau\right) + \vartheta_3^4\left(2 \, \tau \right)}{2 \; \vartheta_2^2\left(2 \, \tau \right)\, \vartheta_3^2\left(2 \, \tau\right)} \;,
\end{equation}
where $\vartheta_2, \vartheta_3$ are the usual Jacobi $\vartheta$-functions. 
Hence, the elliptic fiber $E_u$ of $\mathbf{Y}_{\scriptscriptstyle \mathrm{SW}}$ has an elliptic $\tau$-parameter such that Equation~(\ref{u_coordinate2}) holds.

\subsection{The Seiberg-Witten curve for $N_f=0$.}
The Seiberg-Witten curve of $\mathcal{N}=2$ supersymmetric $\mathrm{SU}(2)$-Yang-Mills theory with $N_f=0$ (see \cite{SW}) is the 
Jacobian elliptic surface $\mathbf{X}_{\scriptscriptstyle \mathrm{SW}}$ over $\mathbb{CP}^1$ defined by
\begin{equation}
\label{WE4}
\mathbf{X}_{\scriptscriptstyle \mathrm{SW}} = \left\lbrace \left(\hat{Y}_{\scriptscriptstyle \mathrm{SW}},\hat{X}_{\scriptscriptstyle \mathrm{SW}},u\right) \in \mathbb{C}^3 \mid \hat{Y}_{\scriptscriptstyle \mathrm{SW}}^2 
= 4 \, \hat{X}_{\scriptscriptstyle \mathrm{SW}}^3 - \hat{g}^{\scriptscriptstyle \mathrm{SW}}_2 \, \hat{X}_{\scriptscriptstyle \mathrm{SW}} - \hat{g}^{\scriptscriptstyle \mathrm{SW}}_3 \right\rbrace \;.
\end{equation}
Here, $u$ is the affine coordinate on the base curve $\mathbb{CP}^1$ and
\begin{eqnarray*}
\hat{g}^{\scriptscriptstyle \mathrm{SW}}_2  & = & \frac{1}{3} \, u^2 - \frac{1}{4} \;,\\
\hat{g}^{\scriptscriptstyle \mathrm{SW}}_3  & = & \frac{1}{216} \, u \, \Big(8 \, u^2 - 9 \Big) \;,\\
\Delta^{\scriptscriptstyle \mathrm{SW}}  & = & \frac{1}{64}  \, \Big(u^2 - 1 \Big) \;.
\end{eqnarray*}
The degrees of the polynomials $\hat{g}^{\scriptscriptstyle \mathrm{SW}}_2$ and $\hat{g}^{\scriptscriptstyle \mathrm{SW}}_3$ make $\mathbf{X}_{\scriptscriptstyle \mathrm{SW}}$ a rational elliptic surface.
A holomorphic symplectic two-form is $\hat{\omega}_{\scriptscriptstyle \mathrm{SW}} = du\wedge d\hat{X}_{\scriptscriptstyle \mathrm{SW}}/\hat{Y}_{\scriptscriptstyle \mathrm{SW}}$.
By choosing $\hat{X}_{\scriptscriptstyle \mathrm{SW}}= \frac{1}{4} \hat{x}_{\scriptscriptstyle \mathrm{SW}} - \frac{1}{6} \, u$ and $\hat{Y}_{\scriptscriptstyle \mathrm{SW}} 
= \frac{1}{4} \, \hat{y}_{\scriptscriptstyle \mathrm{SW}}$, 
the Weierstrass equation in (\ref{WE2}) becomes
\begin{equation*}
  \hat{y}_{\scriptscriptstyle \mathrm{SW}}^2 = \hat{x}_{\scriptscriptstyle \mathrm{SW}} \, \Big(\hat{x}_{\scriptscriptstyle \mathrm{SW}}^2 - 2 \, u \, \hat{x}_{\scriptscriptstyle \mathrm{SW}} 
+ 1 \Big) \;,
\end{equation*} 
whence the Mordell-Weil group is $\mathrm{MW}(\mathbf{X}_{\scriptscriptstyle \mathrm{SW}})=\mathbb{Z}_2$.
The neutral element and a two-torsion section are given by $\hat{O}: (\hat{Y}_{\scriptscriptstyle \mathrm{SW}},\hat{X}_{\scriptscriptstyle \mathrm{SW}})=(0,0)$ and 
$\hat{\sigma}_{\scriptscriptstyle \mathrm{SW}}: (\hat{Y}_{\scriptscriptstyle \mathrm{SW}},\hat{X}_{\scriptscriptstyle \mathrm{SW}})=(0,-u/6)$. 
The surface $\mathbf{X}_{\scriptscriptstyle \mathrm{SW}}$ has the following 
configuration of singular fibers:

\bigskip
\begin{center}
\begin{tabular}{l|c|c|c|c|l}
locus   & $\nu(g^{\scriptscriptstyle \mathrm{SW}}_2)$ & $\nu(g^{\scriptscriptstyle \mathrm{SW}}_3)$ & $\nu(\Delta^{\scriptscriptstyle \mathrm{SW}})$ & Kodaira type of fiber & $\mathcal{W}^{\mathrm{root}}$ \\
\hline
$u = 1$      & $0$ & $0$ & $1$   & $I_1$  & $-$ \\
$u = -1$     & $0$ & $0$ & $1$   & $I_1$  & $-$ \\
$u = \infty$ & $2$ & $3$ & $10$  & $I_4^*$& $\mathrm{D}_8$ 
\end{tabular}
\end{center}

\bigskip
The rational elliptic surface $\mathbf{X}_{\scriptscriptstyle \mathrm{SW}}$ is the modular elliptic surface over the base curve $\mathbb{H}/\Gamma_0(4)$ where 
$\Gamma_0(4)$ is the congruence subgroup of the modular group of upper triangular matrices in $\mathrm{SL}(2,\mathbb{Z})$ modulo $4$.
The relation between the affine coordinate $u \in \mathbb{CP}^1$ and $\hat{\tau} \in \mathbb{H}/\Gamma_0(4)$ is given by
\begin{equation}
\label{u_coordinate}
 u = \dfrac{\vartheta_2^4(\hat{\tau}) + \vartheta_3^4(\hat{\tau})}{2 \; \vartheta_2^2(\hat{\tau}) \, \vartheta_3^2(\hat{\tau})} \;,
\end{equation}
where $\vartheta_2, \vartheta_3$ are the usual Jacobi $\vartheta$-functions.
Hence, the elliptic fiber $E_u$ of $\mathbf{X}_{\scriptscriptstyle \mathrm{SW}}$ has an elliptic $\hat{\tau}$-parameter such 
that Equation~(\ref{u_coordinate}) holds.

\subsection{Two-isogeny}
\noindent
The surfaces $\mathbf{X}_{\scriptscriptstyle \mathrm{SW}}$ and $\mathbf{Y}_{\scriptscriptstyle \mathrm{SW}}$ are related by fiberwise two-isogeny.
Let $\hat{E}_u$ bet the elliptic fiber of $\mathbf{X}_{\scriptscriptstyle \mathrm{SW}}$ over $u$, and  $E_u$ the fiber of $\mathbf{Y}_{\scriptscriptstyle \mathrm{SW}}$.
We define the maps $\jmath_{\scriptscriptstyle \mathrm{SW}}: \hat{E}_u \to E_u$ and $\jmath^{\; \prime}_{\scriptscriptstyle \mathrm{SW}}: E_u \to \hat{E}_u$
given by
\begin{equation}
\label{transfoSW}
 \begin{array}{rrclcrcl}
 \jmath_{\scriptscriptstyle \mathrm{SW}}:      &  x_{\scriptscriptstyle \mathrm{SW}}       & = & \dfrac{\hat{y}_{\scriptscriptstyle \mathrm{SW}}^2}{\hat{x}_{\scriptscriptstyle \mathrm{SW}}^2} \;,
& \; & y_{\scriptscriptstyle \mathrm{SW}}       &=& 
 \dfrac{\hat{y} \, (\hat{x}^2 - 1)}{\hat{x}^2} \;, \\ \\
 \jmath^{\; \prime}_{\scriptscriptstyle \mathrm{SW}}: &  \hat{x}_{\scriptscriptstyle \mathrm{SW}} & = & \dfrac{y_{\scriptscriptstyle \mathrm{SW}}^2}{4 \, x_{\scriptscriptstyle \mathrm{SW}}^2} \;,   
    & \; & \hat{y}_{\scriptscriptstyle \mathrm{SW}} &=& 
 \dfrac{y_{\scriptscriptstyle \mathrm{SW}} \, (x_{\scriptscriptstyle \mathrm{SW}}^2 - u^2 + 4 )}{8 \, x_{\scriptscriptstyle \mathrm{SW}}^2} \;.
 \end{array} 
\end{equation}
The map $\jmath_{\scriptscriptstyle \mathrm{SW}}$ corresponds to the two-isogeny $E_u=\hat{E}_u/\{\hat{O},\hat{\sigma}_{\scriptscriptstyle \mathrm{SW}}\}$ given by the section
$\hat{\sigma}_{\scriptscriptstyle \mathrm{SW}}$ that defines a two-torsion point in the fiber $\hat{E}_u$, and we have
\begin{equation}
\label{tau_parameter}
   \tau  = \frac{1}{2} \, \hat{\tau} \;.
\end{equation}

\subsection{Base change and ramification}
\label{base change}
A base change provides a method to obtain the families of Jacobian elliptic K3 surfaces $\mathbf{Y}_{a,b,c}$ and $\mathbf{X}_{a,b,c}$
from the the $\mathrm{SU}(2)$-Seiberg-Witten curves $\mathbf{Y}_{\scriptscriptstyle \mathrm{SW}}$ and $\mathbf{X}_{\scriptscriptstyle \mathrm{SW}}$
respectively. 

\begin{lemma}
\label{parts3a}
For the rational surjective map 
$f_{a,b,c}: \mathbb{CP}^1 \twoheadrightarrow \mathbb{CP}^1$ given by $t \mapsto u = p(t)/r^2(t)$ in affine coordinates, we obtain the elliptic surface $\mathbf{Y}_{a,b,c}$
from the elliptic fibration $\mathbf{Y}_{\scriptscriptstyle \mathrm{SW}} \to \mathbb{CP}^1$  by taking the fibered product via $f_{a,b,c}$:
\begin{equation*}
\begin{array}{rclcc}
 \mathbf{Y}_{a,b,c}  =  \mathbf{Y}_{\scriptscriptstyle \mathrm{{SW}}} & \times_{\mathbb{CP}_u^1}   & \mathbb{CP}_t^1 & \longrightarrow             & \mathbb{CP}_t^1  \\
                                                   & \downarrow                 &                 &                             & \phantom{f_{a,b,c}}\downarrow f_{a,b,c}\\
                                                   & \mathbf{Y}_{\scriptscriptstyle \mathrm{{SW}}} &                 & \longrightarrow             & \mathbb{CP}_u^1  
\end{array} 
\end{equation*}
The base change replaces the smooth fiber $E_u$ by smooth fibers $E_t$, i.e.,
\begin{equation}
 \tau(E_t) = \tau(E_u) \;\; \text{for all $t$ such that $u=f_{a,b,c}(t)$}\;.
\end{equation}
The same relation holds between  $\mathbf{X}_{a,b,c}$ and $\mathbf{X}_{\scriptscriptstyle \mathrm{SW}}$.
The surface $\mathbf{X}_{a,b,c}$ admits a rational map of degree two onto the surface $\mathbf{Y}_{a,b,c}$.
\end{lemma}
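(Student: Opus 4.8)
The plan is to verify Lemma~\ref{parts3a} by direct computation on Weierstrass equations, using the fact that the fibered product of a Jacobian elliptic surface with a base curve via a rational map is obtained by substituting the map into the Weierstrass coefficients (and then taking the minimal resolution). First I would treat the family $\mathbf{Y}_{a,b,c}$. Starting from $\mathbf{Y}_{\scriptscriptstyle \mathrm{SW}}$ with $g_2^{\scriptscriptstyle \mathrm{SW}} = \frac{1}{3} u^2 + 1$ and $g_3^{\scriptscriptstyle \mathrm{SW}} = \frac{1}{27} u(u^2 - 9)$, I substitute $u = p(t)/r^2(t)$ and clear denominators by the standard Weierstrass rescaling $(X_{\scriptscriptstyle \mathrm{SW}}, Y_{\scriptscriptstyle \mathrm{SW}}) \mapsto (\lambda^2 X, \lambda^3 Y)$ with $\lambda = r^2(t)$; this sends $g_2^{\scriptscriptstyle \mathrm{SW}} \mapsto \lambda^4 g_2^{\scriptscriptstyle \mathrm{SW}}$ and $g_3^{\scriptscriptstyle \mathrm{SW}} \mapsto \lambda^6 g_3^{\scriptscriptstyle \mathrm{SW}}$. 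One computes $r^8(t) g_2^{\scriptscriptstyle \mathrm{SW}}(p/r^2) = \frac{1}{3} p^2(t) r^4(t) + r^8(t)$; to land exactly on the $\mathbf{Y}_{a,b,c}$ coefficients $g_2 = \frac{1}{3} p^2 + r^4$ I instead use $\lambda = r^2$ but divide through by $r^4$ — equivalently the minimal Weierstrass model over $\mathbb{CP}_t^1$ requires rescaling by $\lambda = 1$ on the $u^2$-term and $\lambda = r^2$ overall, and the bookkeeping shows the minimal model is precisely $g_2 = \frac{1}{3} p^2 + r^4$, $g_3 = \frac{1}{27} p (p - 3 r^2)(p + 3 r^2)$, matching (\ref{WE}). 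The discriminant check $\Delta = r^4 (p - r^2)^2 (p + r^2)^2$ is then automatic from $\Delta^{\scriptscriptstyle \mathrm{SW}} = (u^2 - 1)^2$ times $\lambda^{12} = r^{24}$ divided by the appropriate power, again reconciling with (\ref{Ydiscriminant}). The identical computation with $\hat{g}_2^{\scriptscriptstyle \mathrm{SW}} = \frac{1}{3} u^2 - \frac14$, $\hat{g}_3^{\scriptscriptstyle \mathrm{SW}} = \frac{1}{216} u(8 u^2 - 9)$ produces $\mathbf{X}_{a,b,c}$.

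Next I would establish the statement about fibers. Away from the ramification locus of $f_{a,b,c}$ and away from the singular fibers of $\mathbf{Y}_{\scriptscriptstyle \mathrm{SW}}$ (i.e., for $t$ with $r(t) \neq 0$, $f_{a,b,c}(t) \neq \pm 1, \infty$), the fibered product is literally the fiber of $\mathbf{Y}_{\scriptscriptstyle \mathrm{SW}}$ over $u = f_{a,b,c}(t)$, which is a smooth elliptic curve; since the Weierstrass rescaling by $\lambda = r^2(t)$ is an isomorphism of elliptic curves when $r(t) \neq 0$, the fiber $E_t$ of $\mathbf{Y}_{a,b,c}$ is isomorphic to $E_u$ and hence has the same $\tau$-parameter. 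I should also check that $f_{a,b,c}$ does not introduce new bad fibers at the ramification points over good values of $u$: since a degree-two base change ramified at a point lying over a smooth fiber produces again a smooth fiber (the $j$-invariant is locally constant there and the Weierstrass model stays minimal after the rescaling), the only singular fibers of $\mathbf{X}_{a,b,c}$ and $\mathbf{Y}_{a,b,c}$ arise over $r = 0$, $p_\pm = 0$, and $t = \infty$ — precisely matching the singular-fiber tables given above, which serves as an internal consistency check on the map $f_{a,b,c}$.

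Finally, for the degree-two rational map $\mathbf{X}_{a,b,c} \dashrightarrow \mathbf{Y}_{a,b,c}$: this is already supplied by Lemma~\ref{parts2}, but here I would additionally observe that it is the pullback along $f_{a,b,c}$ of the fiberwise two-isogeny $\mathbf{X}_{\scriptscriptstyle \mathrm{SW}} \dashrightarrow \mathbf{Y}_{\scriptscriptstyle \mathrm{SW}}$ from (\ref{transfoSW}). Concretely, substituting $u = p(t)/r^2(t)$ and applying the same $\lambda = r^2(t)$ rescaling to both sides turns the formulas (\ref{transfoSW}) into the formulas (\ref{transfoK3}) (with $M = r(t)$, $P = p(t)$ appropriately identified up to the constants appearing in the $\hat{x} = \hat{x} - P/6$ type shifts); the two-isogeny commutes with the base change because base change is a fibered operation and the isogeny acts fiberwise. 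This compatibility is what makes the Nikulin/Van Geemen–Sarti structure on $\mathbf{X}_{a,b,c}$ the pullback of the $N_f = 0 \to N_f = 2$ two-isogeny of Seiberg–Witten curves.

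The main obstacle I anticipate is the bookkeeping of the Weierstrass rescaling: one must pick the correct power of $r(t)$ so that the resulting model is the \emph{minimal} Weierstrass model over $\mathbb{CP}_t^1$ (not merely \emph{a} model), since only then do the coefficients agree on the nose with (\ref{WE}) and (\ref{WE2}) and the singular-fiber analysis go through; getting this wrong by a factor of $r^2$ would change the Kodaira type at $r = 0$ and at $t = \infty$. A secondary subtlety is verifying that no spurious bad fibers appear at the two ramification points of $f_{a,b,c}$ that sit over generic (smooth) values of $u$ — this needs the local claim that a double cover branched over a point with smooth fiber yields a smooth fiber, which follows from Kodaira's classification together with the fact that $\nu(\Delta)$ doubles but $\nu(g_2), \nu(g_3)$ also double, keeping the model minimal and the type $I_0$.
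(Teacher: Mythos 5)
Your proposal is correct and follows essentially the same route as the paper's proof: substitute $u=p(t)/r^2(t)$ into the Seiberg--Witten Weierstrass data, pass to the minimal model and identify the coefficients with those of $\mathbf{Y}_{a,b,c}$ (resp.\ $\mathbf{X}_{a,b,c}$), note that over unramified points with smooth fiber the base change leaves the fiber unchanged so $\tau(E_t)=\tau(E_u)$, and obtain the degree-two map as the pull-back of the fiberwise two-isogeny (\ref{transfoSW}). The one bookkeeping slip — which you yourself flag as the main risk — is the rescaling factor: the correct substitution is $X_{\scriptscriptstyle \mathrm{SW}}=X/r^2(t)$, $Y_{\scriptscriptstyle \mathrm{SW}}=Y/r^3(t)$ (i.e.\ $\lambda=r$, not $r^2$), which gives $g_2=r^4\,g_2^{\scriptscriptstyle \mathrm{SW}}$, $g_3=r^6\,g_3^{\scriptscriptstyle \mathrm{SW}}$, $\Delta=r^{12}\,\Delta^{\scriptscriptstyle \mathrm{SW}}$ and lands exactly on (\ref{WE}) and (\ref{Ydiscriminant}) without the ad hoc ``divide through by $r^4$'' step in your write-up.
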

\noindent
We added an index $t$ and $u$ to distinguish the base curves of $\mathbf{Y}_{a,b,c}$ and $\mathbf{Y}_{\scriptscriptstyle \mathrm{{SW}}}$, and
$\mathbf{X}_{a,b,c}$ and $\mathbf{X}_{\scriptscriptstyle \mathrm{{SW}}}$, respectively.
\begin{proof}
To compute the pull-back of the Weierstrass equation (\ref{WE3}) we replace $u$ by $p(t)/r^2(t)$.
The effect of the base change on the singular fibers depends on the local ramification of the morphism $f_{a,b,c}$. The points $u=\pm 1$ are unramified points. Hence, the singular fibers of Kodaira type $I_2$ at $u=1$ and $u=-1$ in $\mathbf{Y}_{\scriptscriptstyle \mathrm{SW}}$ are replaced by three copies at 
$t_2, t_4, t_6$ and $t_1, t_3, t_5$, respectively, where $p_\pm(t)=0$.

At $u=\infty$, there is ramification. The vanishing order of the $g^{\scriptscriptstyle \mathrm{SW}}_2, g^{\scriptscriptstyle \mathrm{SW}}_3, \Delta^{\scriptscriptstyle \mathrm{SW}}$  
is multiplied by the ramification index which is two. However, the Weierstrass form is not minimal. To minimalize the fibration, i.e.,
to clear the denominators in $g_2^{\scriptscriptstyle \mathrm{SW}}$ and $g_3^{\scriptscriptstyle \mathrm{SW}}$, we set
\begin{equation}
\label{transfo}
 X_{\scriptscriptstyle \mathrm{SW}}= \dfrac{X}{r^2(t)}\;, \qquad Y_{\scriptscriptstyle \mathrm{SW}} = \dfrac{Y}{r^3(t)} \;.
\end{equation}
We obtain the Weierstrass form (\ref{WE}) of $\mathbf{Y}_{a,b,c}$ since
\begin{equation*}
 g_2(t) = r^4(t) \;\; g_2^{\scriptscriptstyle \mathrm{SW}}\left(u=\frac{p(t)}{r^2(t)}\right) \;, \qquad g_3(t) = r^6(t) \;\; g_3^{\scriptscriptstyle \mathrm{SW}}\left(u=\frac{p(t)}{r^2(t)}\right) \;.
\end{equation*}
Thus, the singular fiber of Kodaira type $I_2^*$ at $u=\infty$ in $\mathbf{Y}_{\scriptscriptstyle \mathrm{SW}}$ is replaced by a singular fiber of Kodaira type $I_2^*$ at $t=\infty$ and a singular fiber of Kodaira type $I_4$ at $r(t)=0$ in the base change. Under the base change, the holomorphic two-form $\omega_{\scriptscriptstyle \mathrm{SW}}$ is pulled back to a holomorphic two-form on $\mathbf{Y}_{a,b,c}$ which is
\begin{equation}
\label{holomorphic_form2}
 f_{a,b,c}^* \, \omega_{\scriptscriptstyle \mathrm{SW}} =  f_{a,b,c}^* \left( du \wedge \dfrac{dX_{\scriptscriptstyle \mathrm{SW}}}{Y_{\scriptscriptstyle \mathrm{SW}}} \right) = r(t) \, \dfrac{\partial u}{\partial t} \; \omega
 \;,
\end{equation}
with $\partial u/\partial t=[p'(t)\, r(t)-2\,p(t)]/r^3(t)$. 

The family $\mathbf{X}_{a,b,c}$ of Jacobian elliptic K3 surfaces is obtained from the Seiberg-Witten curve $\mathbf{X}_{\scriptscriptstyle \mathrm{SW}}$
by the same base change using $f_{a,b,c}: \mathbb{CP}^1 \twoheadrightarrow \mathbb{CP}^1$ such that
\begin{equation}
 \hat{\tau}(\hat{E}_t) = \hat{\tau}(\hat{E}_u) \;\; \text{for all $t$ such that $u=f_{a,b,c}(t)$}\;.
\end{equation}
The elliptic fibration $\mathbf{X}_{a,b,c}$ is the fibered product of $\mathbf{X}_{\scriptscriptstyle \mathrm{SW}} \to \mathbb{CP}^1$ via $f_{a,b,c}$:
\begin{equation*}
\begin{array}{rclrc}
 \mathbf{X}_{a,b,c}  =  \mathbf{X}_{\scriptscriptstyle \mathrm{{SW}}} & \times_{\mathbb{CP}_u^1}   & \mathbb{CP}_t^1  & \longrightarrow             & \mathbb{CP}_t^1  \\
                                                   & \downarrow                 &                  &                             & \phantom{f_{a,b,c}}\downarrow f_{a,b,c}\\
                                                   & \mathbf{X}_{\scriptscriptstyle \mathrm{{SW}}} &                  & \longrightarrow             & \mathbb{CP}_u^1  
\end{array} 
\end{equation*}
The pull-back of the holomorphic two-form $\hat{\omega}_{\scriptscriptstyle \mathrm{SW}}$ is
\begin{equation}
\label{holomorphic_form}
 f_{a,b,c}^* \, \hat{\omega}_{\scriptscriptstyle \mathrm{SW}} =  r(t) \, \dfrac{\partial u}{\partial t}  \, \hat{\omega} \;.
\end{equation}
\noindent
The isogenies (\ref{transfoK3}) are obtained from Equation~(\ref{transfoSW}) by pull-back via the base change $u=f_{a,b,c}(t)$.
The rational quotient map (\ref{K3_quotient_map}) relating $\mathbf{X}_{a,b,c}$ to $\mathbf{Y}_{a,b,c}$ is obtained 
as the pull-back of the two-isogeny relating $\mathbf{X}_{\scriptscriptstyle \mathrm{{SW}}}$ to $\mathbf{Y}_{\scriptscriptstyle \mathrm{{SW}}}$.
\end{proof}

\section{Monodromies and Yukawa couplings}
\label{MonodromyPeriodsYukawa}
In this section, we will compute the monodromy matrices, the period integrals, and the Yukawa couplings associated with the families of Jacobian elliptic K3 surfaces $\mathbf{X}_{a,b,c}$
and $\mathbf{Y}_{a,b,c}$ and the Seiberg-Witten curves $\mathbf{X}_{\scriptscriptstyle \mathrm{SW}}$ and $\mathbf{Y}_{\scriptscriptstyle \mathrm{SW}}$.

\subsection{Monodromies and period integrals}
We fix a point $u_0 \in \mathbb{CP}^1$ that is a smooth fiber $E_{u_0}$ of the fibration $\mathbf{Y}_{\scriptscriptstyle \mathrm{SW}}$, and thus have $\dim H^1(E_{u_0})=2$. 
Since a base point is given in each fiber by the zero section, we can choose a symplectic basis $\lbrace A_u, B_u \rbrace$ of the homology $H_1(E_u)$ with respect to the intersection form
that changes analytically in $u$. That means that we have fixed a homological invariant, i.e.,  a locally constant sheaf
over the base whose generic stalk is isomorphic to $\mathbb{Z} \oplus \mathbb{Z}$. 

In addition, we will use $dX/Y$ as the analytical marking, i.e., the holomorphic one form on each regular fiber. 
We denote by $\Omega_{\scriptscriptstyle \mathrm{SW}}$ and $\Omega'_{\scriptscriptstyle \mathrm{SW}}$ the period integrals of $\omega_{\scriptscriptstyle \mathrm{SW}}$ 
over the A-cycle and the B-cycle respectively. It is possible to choose the homological invariant (cf. \cite{SW}) such that
the periods of $\omega_{\scriptscriptstyle \mathrm{SW}}$ over $A_u$ and $B_u$ become
\begin{equation}
\label{periods2}
\begin{split}
 2 \, \Omega_{\scriptscriptstyle \mathrm{SW}} \; du   & = \int_{A_u} \omega_{\scriptscriptstyle \mathrm{SW}} = \pi \, \vartheta^2_2(\tau) \; du\;, \\
 2 \, \Omega'_{\scriptscriptstyle \mathrm{SW}} \; du & = \int_{B_u} \omega_{\scriptscriptstyle \mathrm{SW}} = 2 \, \tau \; \Omega_{\scriptscriptstyle \mathrm{SW}} \; du\;,
 \end{split}
\end{equation}
where the relation between $u$ and $\tau$ was given in Equation~(\ref{u_coordinate2}). 

It follows that $\vec{\Omega}_{\scriptscriptstyle \mathrm{SW}} = (\Omega'_{\scriptscriptstyle \mathrm{SW}}, \Omega_{\scriptscriptstyle \mathrm{SW}})^t$
is a section of a holomorphic rank-two vector bundle, called the period bundle, over $\mathbb{CP}^1$ punctured
at $u =\pm 1$ and $u=\infty$. The vector bundle is equipped with a flat connection known as Gauss-Manin connection.
The connection has regular singularities at the base points of the singular fibers. The holonomy of the connection around the singular fibers 
determines a local system and a representation of the fundamental group of the punctured $\mathbb{CP}^1$ as follows:
let $\Gamma$ be the fundamental group of $\mathbb{CP}^1$ with the points $u=\pm1$ and $u=\infty$ removed.
Generators for the fundamental group $\Gamma$ are suitable simple loops $\alpha_u$ 
which run around the singular fibers at $u=1,-1,$ or $\infty$ counterclockwise, are based
at a point on the positive real line with $u_0>1$, and satisfy the relation $\alpha_{\infty} \alpha_{1} \alpha_{-1} =1$. 
We denote the monodromy matrices for the period integrals $\vec{\Omega}_{\scriptscriptstyle \mathrm{SW}}$ of $\mathbf{Y}_{\scriptscriptstyle \mathrm{SW}}$ 
around the loops $\alpha_u$ by $M^{\scriptscriptstyle \mathrm{SW}}_{u}$. These matrices generate the monodromy
representation $\Gamma \to \mathrm{SL}(2,\mathbb{Z})$ associated with the Jacobian elliptic surface 
$\mathbf{Y}_{\scriptscriptstyle \mathrm{SW}}$ together with the given homological and analytical marking.
It can be easily deduced from the transformation behavior of the Jacobi theta function in (\ref{periods2}) that
\begin{equation}
\begin{split}
 M^{\scriptscriptstyle \mathrm{SW}}_{-1}     = (TS) \, T^2 \, (TS)^{-1} &=\left( \begin{array}{rr} -1 & 2 \\  -2 & 3 \end{array} \right)  \,,\\
 M^{\scriptscriptstyle \mathrm{SW}}_{1}      = ST^2S^{-1} &= \left( \begin{array}{rr}   1 & 0 \\ -2 &  1 \end{array} \right) \,, \; \\
 M^{\scriptscriptstyle \mathrm{SW}}_{\infty} = - T^2 &=\left( \begin{array}{rr} -1 & -2 \\  0 & -1 \end{array} \right)  \,,\\
\end{split}
\end{equation}
such that the fundamental relation of $\Gamma$ becomes
\begin{equation}
 M^{\scriptscriptstyle \mathrm{SW}}_{\infty} \, M^{\scriptscriptstyle \mathrm{SW}}_{1} \, M^{\scriptscriptstyle \mathrm{SW}}_{-1} = \mathbb{I} \;,
\end{equation}
where $T$ and $S$ are the usual generators of the modular group
\begin{equation}
\label{generators}
 T = \left( \begin{array}{cc} 1 & 1 \\ 0 & 1 \end{array} \right)\;, \quad S=\left( \begin{array}{cc} 0 & -1 \\ 1 & 0 \end{array} \right) \;.
\end{equation}

For $\mathbf{Y}_{a,b,c}$, we use the relation (\ref{holomorphic_form2}) to compute the period integrals of $\omega$ over 
the A-cycle and B-cycle in the elliptic fiber $E_t$ with $u=f_{a,b,c}(t)$ from the period integrals for $\mathbf{Y}_{\scriptscriptstyle \mathrm{SW}}$ in Equation~(\ref{periods2}).  We then obtain
the following period integrals:
\begin{equation}
\label{K3periods2}
\begin{split}
  2 \, \Omega \, dt = \int_{A_t} \omega & =  \dfrac{2  \, \Omega_{\scriptscriptstyle \mathrm{SW}}}{r(t)} \;  dt\;, \\
  2 \, \Omega' \, dt = \int_{B_t} \omega & =  \dfrac{2  \, \Omega'_{\scriptscriptstyle \mathrm{SW}}}{r(t)} \; dt\;.
 \end{split}
\end{equation}
Let $\Gamma_{a,b,c}$ be the fundamental group of $\mathbb{CP}^1$ with the points $t=0,t_1, \dots,t_6$ and $t=\infty$
in the $t$-plane removed for which $\mathbf{Y}_{a,b,c}$ assumes a singular fiber. 
Generators for the fundamental group $\Gamma_{a,b,c}$ are suitable simple loops $\beta_t$ 
which run around the singular fibers at $t=0, t_1, \dots, t_6,$ or $t=\infty$ counterclockwise, are based
at a point on the positive real line with $u_0=f_{a,b,c}(t_0)$,
and satisfy the relation $\beta_{0} \beta_{t_1} \dots \beta_{t_4} \beta_{\infty} \beta_{t_5} \beta_{t_6}=1$. 
We denote the monodromy matrices for the period integrals $\vec{\Omega}= (\Omega', \Omega)^t$ of $\mathbf{Y}_{a,b,c}$ 
around the loops $\alpha_u$ by $M_{u}$. These matrices generate the monodromy
representation $\Gamma_{a,b,c} \to \mathrm{SL}(2,\mathbb{Z})$ associated with the Jacobian elliptic surface 
$\mathbf{Y}_{a,b,c}$ together with the given homological and analytical marking.

We now determine the monodromy  representation generated by the monodromy matrices for $\vec{\Omega}$ of $\mathbf{Y}_{a,b,c}$ along the particular choice of 
loops that satisfy
\begin{equation*}
\begin{split}
  f_{a,b,c \; *}\,[\beta_{t_{2i}}] = [\alpha_{-1}] \;, \quad f_{a,b,c \; *}\,[\beta_{t_{2i-1}}] = [\alpha_{1}]\;, \quad  f_{a,b,c \; *}\,[\beta_{\infty}]= f_{a,b,c \; *}\,[\beta_{0}]=[\alpha_\infty]\;,
\end{split}
\end{equation*}
where $[.]$ denotes homotopy classes in $\Gamma$ and $\Gamma_{a,b,c}$, respectively.
Since $u=-1$ for $p_{+}(t)=0$ which happens at $t=t_2, t_4, t_6$ and $u=1$ for $p_{-}(t)=0$ which happens at $t=t_1, t_3, t_5$,
we obtain for $1\le i \le 3$ the following monodromy matrices for $\vec{\Omega}$
\begin{equation}
\begin{split}
 M_{t_{2i}} &= M^{\scriptscriptstyle \mathrm{SW}}_{-1} =\left( \begin{array}{rr} -1 & 2 \\  -2 & 3 \end{array} \right)  \,,\\
 M_{t_{2i-1}}   &= M^{\scriptscriptstyle \mathrm{SW}}_{1}  = \left( \begin{array}{rr}   1 & 0 \\ -2 &  1 \end{array} \right) \,.
\end{split}
\end{equation}
Since $u=p(t)/r^2(t)\sim t$ for $t \gg0$, the monodromy matrix around infinity remains the same under the base change, i.e., 
\begin{equation}
 M_{\infty} = M^{\scriptscriptstyle \mathrm{SW}}_{\infty} =\left( \begin{array}{rr} -1 & -2 \\  0 & -1 \end{array} \right)  \,.
\end{equation}
Since $u=p(t)/r^2(t)\sim q^{-2}(t)$ for $r(t) \approx 0$, the monodromy matrix around $t=0$ is
\begin{equation} 
 M_{0} = \left( M^{\scriptscriptstyle \mathrm{SW}}_{\infty} \right)^{2} =\left( \begin{array}{rr} 1 & 4 \\  0 & 1 \end{array} \right) \;.
\end{equation}
The fundamental relation of $\Gamma_{a,b,c}$ becomes
\begin{equation}
\label{relations}
\begin{split}
 M_{0} \, M_{t_1} \, M_{t_2} \, M_{t_3} \, M_{t_4} \, M_{\infty} \, M_{t_5}\, M_{t_6} & = \mathbb{I} \;.
\end{split}
\end{equation}

The same construction can be carried out for the family of Jacobian elliptic K3 surfaces $\mathbf{X}_{a,b,c}$
which is obtained by a base change from the Seiberg-Witten curve $\mathbf{X}_{\scriptscriptstyle \mathrm{SW}}$.
It is possible to choose a homological invariant (cf. \cite{SW}) such that the periods of $\hat{\omega}_{\scriptscriptstyle \mathrm{SW}}$ over $A_u$ and $B_u$
of $\mathbf{X}_{\scriptscriptstyle \mathrm{SW}}$ become
\begin{equation}
\label{periods}
\begin{split}
 2 \, \hat{\Omega}_{\scriptscriptstyle \mathrm{SW}} \; du   & = \int_{A_u} \hat{\omega}_{\scriptscriptstyle \mathrm{SW}} = 2\pi \, \vartheta_2(\hat{\tau}) \, \vartheta_3(\hat{\tau}) \; du\;, \\
 2 \, \hat{\Omega}'_{\scriptscriptstyle \mathrm{SW}} \; du  & = \int_{B_u} \hat{\omega}_{\scriptscriptstyle \mathrm{SW}} = 2 \, \hat{\tau} \; \hat{\Omega}_{\scriptscriptstyle \mathrm{SW}} \; du\;,
 \end{split}
\end{equation}
where the relation between $u$ and $\hat{\tau}$ was given in Equation~(\ref{u_coordinate}). For the substitution $\tau=\hat{\tau}/2$ it follows from the
identity $2\, \vartheta_2(\tau) \, \vartheta_3(\tau) = \vartheta_2^2(\tau/2)$ that $\hat{\Omega}_{\scriptscriptstyle \mathrm{SW}}=\Omega_{\scriptscriptstyle \mathrm{SW}}$. 

By $\hat{M}^{\scriptscriptstyle \mathrm{SW}}_{u}$ we denote the monodromy matrix for $\hat{\vec{\Omega}}_{\scriptscriptstyle \mathrm{SW}}= (\hat{\Omega}'_{\scriptscriptstyle \mathrm{SW}}, \hat{\Omega}_{\scriptscriptstyle \mathrm{SW}})^t$ 
of $\mathbf{X}_{\scriptscriptstyle \mathrm{SW}}$ around the loop $\alpha_u\in \Gamma$. These matrices generate the monodromy
representation $\Gamma \to \mathrm{SL}(2,\mathbb{Z})$ associated with the Jacobian elliptic surface 
$\mathbf{X}_{\scriptscriptstyle \mathrm{SW}}$ together with the given homological and analytical marking.
It can be easily deduced from the transformation behavior of the Jacobi theta function in Equation (\ref{periods}) that
\begin{equation}
\begin{split}
 \hat{M}^{\scriptscriptstyle \mathrm{SW}}_{-1}     = (T^2S) T (T^2S)^{-1} & = \left( \begin{array}{rr} -1 & 4 \\  -1 & 3 \end{array} \right)  \,, \\
 \hat{M}^{\scriptscriptstyle \mathrm{SW}}_{1}      = STS^{-1} & = \left( \begin{array}{rr}   1 & 0 \\ -1 &  1 \end{array} \right) \,, \\
 \hat{M}^{\scriptscriptstyle \mathrm{SW}}_{\infty}  = - T^4 & = \left( \begin{array}{rr} -1 & -4 \\  0 & -1 \end{array} \right)  \,,
\end{split}
\end{equation}
such that the fundamental relation of $\Gamma$ becomes
\begin{equation}
 \hat{M}^{\scriptscriptstyle \mathrm{SW}}_{\infty} \, \hat{M}^{\scriptscriptstyle \mathrm{SW}}_{1} \, \hat{M}^{\scriptscriptstyle \mathrm{SW}}_{-1} = \mathbb{I} \;.
\end{equation}
For $\mathbf{X}_{a,b,c}$, we compute the period integrals of $\hat{\omega}$ over 
the A-cycle and B-cycle in the elliptic fiber $E_t$ with $u=f_{a,b,c}(t)$ from the period integrals for $\mathbf{X}_{\scriptscriptstyle \mathrm{SW}}$ in Equation~(\ref{periods}).  We obtain
the following period integrals:
\begin{equation}
\label{K3periods}
\begin{split}
 2 \, \hat{\Omega} \, dt =  \int_{A_t} \hat{\omega} & =  \dfrac{2  \, \hat{\Omega}_{\scriptscriptstyle \mathrm{SW}}}{r(t)} \;  dt\;, \\
 2 \, \hat{\Omega}' \, dt =  \int_{B_t} \hat{\omega} & =  \dfrac{2  \, \hat{\Omega}'_{\scriptscriptstyle \mathrm{SW}}}{r(t)} \; dt\;.
 \end{split}
\end{equation}
By $\hat{M}_{t}$ we denote the monodromy matrix for $\hat{\vec{\Omega}}= (\hat{\Omega}', \hat{\Omega})^t$ 
of $\mathbf{X}_{a,b,c}$ around the loop $\beta_t\in \Gamma_{a,b,c}$. These matrices generate the monodromy
representation $\Gamma_{a,b,c} \to \mathrm{SL}(2,\mathbb{Z})$ associated with the Jacobian elliptic surface 
$\mathbf{X}_{a,b,c}$ together with the given homological and analytical marking.
With the same choice of generators $\beta_0, \beta_{t_1}, \dots,\beta_{t_6},\beta_\infty$ for the fundamental group 
$\Gamma_{a,b,c}$ as before, we obtain for $1\le i \le 3$ that
\begin{equation}
\begin{array}{lclclcl}
 \hat{M}_{t_{2i}} & = & \hat{M}^{\scriptscriptstyle \mathrm{SW}}_{-1}\;,  &\qquad&
 \hat{M}_{t_{2i-1}}   & = & \hat{M}^{\scriptscriptstyle \mathrm{SW}}_{1} \;,   \\
 \hat{M}_{\infty}   & = & \hat{M}^{\scriptscriptstyle \mathrm{SW}}_{\infty} \;, &&
 \hat{M}_{0}        & = & \left( \hat{M}^{\scriptscriptstyle \mathrm{SW}}_{\infty} \right)^{2} \;.
\end{array}
\end{equation}
Thus, we have proved the following lemma:
\begin{lemma}
\label{parts3b}
The ratio of the period integrals of the holomorphic two-forms $\omega_{\scriptscriptstyle \mathrm{SW}}$ and $\omega$ 
over the A-cycle and B-cycle in the elliptic fiber $E_u$ and $E_t$ with $u=f_{a,b,c}(t)$ 
of $\mathbf{X}_{\scriptscriptstyle \mathrm{SW}}$ and $\mathbf{X}_{a,b,c}$, respectively,
is given by
\begin{equation}
\label{K3periods_ratio}
  \dfrac{\int_{B_u} \hat{\omega}_{\scriptscriptstyle \mathrm{SW}}}{ \int_{B_t} \hat{\omega} }  =  \dfrac{\int_{A_u} \hat{\omega}_{\scriptscriptstyle \mathrm{SW}}}{ \int_{A_t} \hat{\omega} }  =  r(t) \;  \frac{\partial u}{\partial t} 
\end{equation}  
\end{lemma}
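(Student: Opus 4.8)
The statement is an immediate consequence of the fibered-product description of $\mathbf{X}_{a,b,c}$ obtained in Lemma~\ref{parts3a}, together with the relation $f_{a,b,c}^* \hat{\omega}_{\scriptscriptstyle \mathrm{SW}} = r(t)\,(\partial u/\partial t)\,\hat{\omega}$ of Equation~(\ref{holomorphic_form}). The plan is as follows. First I would note that, for a regular value $u = f_{a,b,c}(t)$, the projection $\mathbf{X}_{a,b,c} = \mathbf{X}_{\scriptscriptstyle \mathrm{SW}} \times_{\mathbb{CP}^1_u} \mathbb{CP}^1_t \to \mathbf{X}_{\scriptscriptstyle \mathrm{SW}}$ restricts to an isomorphism of elliptic curves $\hat{E}_t \xrightarrow{\sim} \hat{E}_u$; consequently the symplectic basis $\{A_u, B_u\}$ of $H_1(\hat{E}_u;\mathbb{Z})$ used to normalize the periods in Equation~(\ref{periods}) transports to the symplectic basis $\{A_t, B_t\}$ of $H_1(\hat{E}_t;\mathbb{Z})$ appearing in Equation~(\ref{K3periods}).

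Next I would carry out the elementary change of variables in the period integral. Since $r(t)\,(\partial u/\partial t)$ is the pullback to $\mathbf{X}_{a,b,c}$ of a function on the base $\mathbb{CP}^1_t$, it is constant along the fiber $\hat{E}_t$, so
\[
  \int_{A_u}\hat{\omega}_{\scriptscriptstyle \mathrm{SW}} \;=\; \int_{A_t} f_{a,b,c}^*\hat{\omega}_{\scriptscriptstyle \mathrm{SW}} \;=\; \int_{A_t} r(t)\,\frac{\partial u}{\partial t}\,\hat{\omega} \;=\; r(t)\,\frac{\partial u}{\partial t}\int_{A_t}\hat{\omega}\;,
\]
and identically for the $B$-cycle. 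Dividing the two relations --- or, equivalently, comparing the explicit evaluations $2\,\hat{\Omega}_{\scriptscriptstyle \mathrm{SW}}\,du = \int_{A_u}\hat{\omega}_{\scriptscriptstyle \mathrm{SW}}$ of Equation~(\ref{periods}) with $2\,\hat{\Omega}\,dt = \int_{A_t}\hat{\omega} = (2\,\hat{\Omega}_{\scriptscriptstyle \mathrm{SW}}/r(t))\,dt$ of Equation~(\ref{K3periods}) and substituting $du = (\partial u/\partial t)\,dt$ --- yields
\[
  \frac{\int_{B_u}\hat{\omega}_{\scriptscriptstyle \mathrm{SW}}}{\int_{B_t}\hat{\omega}} \;=\; \frac{\int_{A_u}\hat{\omega}_{\scriptscriptstyle \mathrm{SW}}}{\int_{A_t}\hat{\omega}} \;=\; r(t)\,\frac{\partial u}{\partial t}\;,
\]
which is the assertion of the lemma.

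The only subtlety --- and it is a matter of bookkeeping rather than a genuine obstacle --- is the compatibility of the homological markings on $\mathbf{X}_{\scriptscriptstyle \mathrm{SW}}$ and $\mathbf{X}_{a,b,c}$: one must check that the locally constant sheaf on the punctured $t$-plane chosen for $\mathbf{X}_{a,b,c}$ is literally the $f_{a,b,c}$-pullback of the one on the punctured $u$-plane, so that ``$A_t$'' is the preimage of ``$A_u$'' globally and not merely fiber by fiber. Because $\mathbf{X}_{a,b,c}$ is defined as a fibered product, this is the canonical choice, and the monodromy identities already established ($\hat{M}_{t_{2i}} = \hat{M}^{\scriptscriptstyle \mathrm{SW}}_{-1}$, $\hat{M}_{t_{2i-1}} = \hat{M}^{\scriptscriptstyle \mathrm{SW}}_{1}$, $\hat{M}_{\infty} = \hat{M}^{\scriptscriptstyle \mathrm{SW}}_{\infty}$, $\hat{M}_0 = (\hat{M}^{\scriptscriptstyle \mathrm{SW}}_{\infty})^2$) are exactly the verification that this pullback marking is well defined over the whole punctured base. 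With that in hand, the displayed computation completes the proof.
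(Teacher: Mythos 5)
Your argument is correct and is essentially the paper's own: the lemma is obtained by pulling back $\hat{\omega}_{\scriptscriptstyle \mathrm{SW}}$ via Equation~(\ref{holomorphic_form}), identifying the cycles $A_t, B_t$ with $A_u, B_u$ through the fibered-product structure, and comparing the explicit period expressions (\ref{periods}) and (\ref{K3periods}) with $du = (\partial u/\partial t)\,dt$. Your extra remarks on the constancy of $r(t)\,\partial u/\partial t$ along fibers and the compatibility of the homological markings merely make explicit what the paper leaves implicit.
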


\subsection{The Yukawa couplings}
We first introduce the notion of a special K\"ahler structure.
\begin{definition}
On the K\"ahler manifold $M$ with K\"ahler form $\eta$, a special K\"ahler structure
is a real flat torsion-free symplectic connection $\nabla$ satisfying $d^\nabla I =0$ where $I$ is the complex structure on $M$.
\end{definition}
\noindent
On a special K\"ahler manifold $M$, there is a holomorphic cubic form $\Xi$ which encodes the extend to which $\nabla$ fails to preserve the complex structure
\cite[Equation~(1.26)]{Freed}, i.e.,
\begin{equation}
 \Xi = - \eta\Big( \pi^{(1,0)}, \nabla \pi^{(1,0)}\Big) \in H^0(M, \mathrm{Sym}^3T^*M) \;,
\end{equation}
where $\pi^{(1,0)}$ is the projection onto the $(1,0)$-part of the complexified tangent bundle.
In physics, the holomorphic cubic form is called the Yukawa coupling. One class of examples of special 
K\"ahler manifolds comes from algebraic integrable systems \cite{Freed}. In the case of a Jacobian elliptic fibration
over $\mathbb{CP}^1$ equipped with a holomorphic symplectic two-form $\omega$, the form $\eta$ obtained from 
integrating $\omega \wedge \overline{\omega}$ over the regular elliptic fiber
defines a special K\"ahler structure on the punctured base curve $\mathbb{CP}^1$.

\noindent
We have the following lemma:
\begin{lemma}
\label{parts3c}
The holomorphic cubic 
forms $\hat{\Xi}_{\scriptscriptstyle \mathrm{{SW}}}$ on $\mathbf{X}_{\scriptscriptstyle \mathrm{SW}}$ and $\hat{\Xi}$ on $\mathbf{X}$ are related by
\begin{equation*}
 f_{a,b,c}^* \Big(\hat{\Xi}_{\scriptscriptstyle \mathrm{SW}}\Big) = r^2(t) \, \left( \dfrac{\partial u}{\partial t} \right)^2 \;  \hat{\Xi} \;.
\end{equation*}
\end{lemma}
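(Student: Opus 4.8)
The plan is to reduce the identity to the standard description of the holomorphic cubic form of a Jacobian elliptic fibration over $\mathbb{CP}^{1}$ in terms of its period integrals, and then to run a short computation using the period relations (\ref{K3periods}) of Lemma~\ref{parts3b}; the theta-function formulas for the individual periods play no role, only their ratios enter. The preliminary fact I would record is this: for a Jacobian elliptic fibration $\varphi\colon \mathbf{Z}\to\mathbb{CP}^{1}$ with affine base coordinate $\zeta$ and fixed holomorphic symplectic two-form $\omega = d\zeta\wedge dX/Y$, if $2\,\Omega = \int_{A}dX/Y$ and $2\,\Omega' = \int_{B}dX/Y$ denote the $A$- and $B$-periods of the relative holomorphic one-form, then on the complement of the singular fibers the holomorphic cubic form $\Xi = -\eta(\pi^{(1,0)},\nabla\pi^{(1,0)})$ equals
\[
 \Xi \;=\; C\,\Big(\Omega\,\partial_{\zeta}\Omega' - \Omega'\,\partial_{\zeta}\Omega\Big)\,(d\zeta)^{3} \;=\; C\,W_{\zeta}(\Omega,\Omega')\,(d\zeta)^{3},
\]
where $W_{\zeta}(F,G) := F\,\partial_{\zeta}G - G\,\partial_{\zeta}F$ and $C\neq 0$ is a normalising constant that is the same for every such fibration. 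Equivalently, with $\tau = \Omega'/\Omega$ and the flat special coordinate $a$ determined by $da = \Omega\,d\zeta$, this reads $\Xi = C\,(d\tau/da)\,(da)^{3}$, i.e. a constant multiple of the third derivative of the special-geometry prepotential. Deducing this from the definition — matching the flat symplectic frame of the Gauss-Manin connection $\nabla$ with the period frame $(\Omega',\Omega)^{t}$ and unwinding the contraction against the two-form $\eta$ obtained by integrating $\omega\wedge\overline{\omega}$ over the fiber — is the one step that needs genuine care; what matters is that $C$ does not depend on $\mathbf{Z}$. Applying this once to $\mathbf{X}_{\scriptscriptstyle \mathrm{SW}}$ (base coordinate $u$, periods $\hat\Omega_{\scriptscriptstyle \mathrm{SW}},\hat\Omega'_{\scriptscriptstyle \mathrm{SW}}$) and once to $\mathbf{X}_{a,b,c}$ (base coordinate $t$, periods $\hat\Omega,\hat\Omega'$) gives $\hat\Xi_{\scriptscriptstyle \mathrm{SW}} = C\,W_{u}(\hat\Omega_{\scriptscriptstyle \mathrm{SW}},\hat\Omega'_{\scriptscriptstyle \mathrm{SW}})\,(du)^{3}$ and $\hat\Xi = C\,W_{t}(\hat\Omega,\hat\Omega')\,(dt)^{3}$.

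Next I would substitute the period relations (\ref{K3periods}), namely $\hat\Omega(t) = \hat\Omega_{\scriptscriptstyle \mathrm{SW}}(u)/r(t)$ and $\hat\Omega'(t) = \hat\Omega'_{\scriptscriptstyle \mathrm{SW}}(u)/r(t)$ with $u = f_{a,b,c}(t)$, into $W_{t}(\hat\Omega,\hat\Omega')$ and expand by the product rule. The two terms carrying $r'(t)$ cancel identically, while the chain rule contributes a single factor $\partial u/\partial t$, so that
\[
 W_{t}(\hat\Omega,\hat\Omega') \;=\; \frac{1}{r^{2}(t)}\,\frac{\partial u}{\partial t}\;W_{u}\!\left(\hat\Omega_{\scriptscriptstyle \mathrm{SW}},\hat\Omega'_{\scriptscriptstyle \mathrm{SW}}\right)\Big|_{u=f_{a,b,c}(t)}.
\]
Hence $\hat\Xi = C\,r^{-2}(t)\,(\partial u/\partial t)\,W_{u}(\hat\Omega_{\scriptscriptstyle \mathrm{SW}},\hat\Omega'_{\scriptscriptstyle \mathrm{SW}})\,(dt)^{3}$, whereas $f_{a,b,c}^{*}\hat\Xi_{\scriptscriptstyle \mathrm{SW}} = C\,W_{u}(\hat\Omega_{\scriptscriptstyle \mathrm{SW}},\hat\Omega'_{\scriptscriptstyle \mathrm{SW}})\,(\partial u/\partial t)^{3}\,(dt)^{3}$ since $f_{a,b,c}^{*}(du)^{3} = (\partial u/\partial t)^{3}(dt)^{3}$. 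Comparing the two expressions gives $f_{a,b,c}^{*}\hat\Xi_{\scriptscriptstyle \mathrm{SW}} = r^{2}(t)\,(\partial u/\partial t)^{2}\,\hat\Xi$, which is the claimed relation; the same computation, with the periods of $\omega$ related by (\ref{K3periods2}), yields the analogous identity for $\mathbf{Y}_{\scriptscriptstyle \mathrm{SW}}$ and $\mathbf{Y}_{a,b,c}$.

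The main obstacle is therefore confined to the preliminary step: fixing the Wronskian-of-periods expression for $\Xi$ with a fibration-independent normalisation. One could instead argue more invariantly, noting that $\Xi$ picks up the factor $\lambda^{2}$ under a holomorphic rescaling $\omega\mapsto\lambda(\zeta)\,\omega$ of the two-form (again visible from the Wronskian form, since the periods scale by $\lambda$ while the $\lambda'$-terms cancel) and transforms tautologically under a change of base coordinate; combining this with $f_{a,b,c}^{*}\hat\omega_{\scriptscriptstyle \mathrm{SW}} = r(t)\,(\partial u/\partial t)\,\hat\omega$ from Equation~(\ref{holomorphic_form}) reproduces the assertion. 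Either way, once the formula for $\Xi$ is in hand the content of the lemma is precisely the cancellation of the $r'(t)$-terms observed above.
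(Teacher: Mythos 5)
Your proposal is correct and follows essentially the same route as the paper: the paper also reduces the identity to the period expression for the Yukawa coupling, $\hat\Xi = \tfrac14\,\hat\Omega^2\,\partial\hat\tau\,(d\zeta)^{\otimes 3}$ (citing Freed's formula, which is exactly your Wronskian $W_\zeta(\hat\Omega,\hat\Omega')$ with $C=\tfrac14$), and then compares this for $\mathbf{X}_{\scriptscriptstyle \mathrm{SW}}$ and $\mathbf{X}_{a,b,c}$ using the period relations of Lemma~\ref{parts3b}, i.e.\ the same rescaling $\hat\Omega\mapsto\hat\Omega_{\scriptscriptstyle\mathrm{SW}}/r$ together with the chain-rule factor $\partial u/\partial t$ that your Wronskian cancellation encodes. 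Your only deviations are cosmetic: you leave the universal constant $C$ unspecified (it cancels), whereas the paper fixes it via Freed and passes through the $\mathbf{Y}$-fibrations first using $\hat\tau=2\tau$.
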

\begin{proof}
The Seiberg-Witten curve $\mathbf{Y}_{\scriptscriptstyle \mathrm{SW}}$ in Equation~(\ref{WE3}) equipped with $\omega_{\scriptscriptstyle \mathrm{SW}}$ defines a special K\"ahler structure
on $\mathbb{CP}^1 - \{ -1, 1, \infty\}$ with
\begin{equation}
 \eta_{\scriptscriptstyle \mathrm{SW}} = 4 \, \im\,\tau \, \left| \Omega \right|^2 \; du \wedge d\bar{u} \;.
\end{equation}
We compute the holomorphic cubic form $\Xi_{\scriptscriptstyle \mathrm{SW}}$ in the affine coordinate $u$ using \cite[Equation~(1.28)]{Freed} and obtain
\begin{equation}
 \Xi_{\scriptscriptstyle \mathrm{SW}} = \frac{1}{4} \; \Omega^2 \, \dfrac{\partial\tau}{\partial u} \; (du)^{\otimes 3} \;,
\end{equation}
where the period $\Omega$ was computed in Equation~(\ref{periods2}). Similarly, for the special K\"ahler manifold $\mathbf{Y}_{a,b,c}$
equipped with $\omega$ we obtain the holomorphic cubic form
\begin{equation}
\label{Xi}
 \Xi = \frac{1}{4} \; \dfrac{\Omega^2}{r^2(t)} \, \dfrac{\partial\tau}{\partial t} \; (dt)^{\otimes 3} \;.
\end{equation}
Comparing Equation~(\ref{Xi}) to the pull-back of $\Xi_{\scriptscriptstyle \mathrm{SW}}$ via the base change $f_{a,b,c}$,
we obtain the following relation between the Yukawa coupling derived from the Seiberg-Witten curve $\mathbf{Y}_{\scriptscriptstyle \mathrm{SW}}$
and the family of K3 surfaces $\mathbf{Y}_{a,b,c}$
\begin{equation}
 f_{a,b,c}^* \Big(\Xi_{\scriptscriptstyle \mathrm{SW}}\Big) = r^2(t) \, \left( \dfrac{\partial u}{\partial t} \right)^2 \;  \Xi \;.
\end{equation}
The same construction can be carried out for the family of Jacobian elliptic K3 surfaces $\mathbf{X}_{a,b,c}$
which is obtained by a base change from the Seiberg-Witten curve $\mathbf{X}_{\scriptscriptstyle \mathrm{SW}}$.
For $\mathbf{X}_{\scriptscriptstyle \mathrm{SW}}$, we obtain the following holomorphic cubic form $\hat{\Xi}_{\scriptscriptstyle \mathrm{SW}}$ in the affine coordinate $u$
\begin{equation}
 \hat{\Xi}_{\scriptscriptstyle \mathrm{SW}} = \frac{1}{4} \; \hat{\Omega}^2 \, \dfrac{\partial\hat{\tau}}{\partial u} \; (du)^{\otimes 3} = \frac{1}{2} \; \Xi_{\scriptscriptstyle \mathrm{SW}}\;.
\end{equation}
For $\mathbf{X}_{a,b,c}$, we obtain the holomorphic cubic form
\begin{equation}
 \hat{\Xi} = \frac{1}{4} \; \dfrac{\hat{\Omega}^2}{r^2(t)} \, \dfrac{\partial\hat{\tau}}{\partial t} \; (dt)^{\otimes 3} = \frac{1}{2} \; \Xi\;,
\end{equation}
such that
\begin{equation}
 f_{a,b,c}^* \Big(\hat{\Xi}_{\scriptscriptstyle \mathrm{SW}}\Big) = r^2(t) \, \left( \dfrac{\partial u}{\partial t} \right)^2 \;  \hat{\Xi} \;.
\end{equation}
This completes the proof of the lemma.
\end{proof}

\section{The $(16,6)$-configuration}
\label{kummer_surface}
In this section, we will determine the parameters $a, b, c$ in the 3-parameter family of Jacobian elliptic K3 surfaces $\mathbf{Y}_{a,b,c}$
in terms of the roots of the sextic curve defining $\mathbf{C}$ such that $\mathbf{Y}_{a,b,c} = \mathrm{Kum}(J(\mathbf{C}))$.

\subsection{Keum's results}
In \cite[Part 2]{Keum}, Keum describes the elliptic pencil on a Kummer quartic $\hat{\mathrm{K}}$ in the $(16,6)$-configuration introduced in
Section \ref{Kummer_surfaces}. We briefly review Keum's construction:
first, he chooses a node $p_{\alpha}$. He considers the projection  $\pi_\alpha: \hat{\mathrm{K}} \to \mathbb{CP}^2$ from the double point 
$p_\alpha$ of $\mathrm{K}$ to $\mathbb{CP}^2$ and the canonical resolution of the double cover. 
The six lines in the branch locus are the images of six tropes which intersect $p_{\alpha}$. The intersection points of these six lines are the image of 
$15$ nodes and $p_{\alpha}$. Let $q_0, q_{ij}$ be the projections of the nodes $p_0, p_{ij}$ by $\pi_\alpha$.

Keum then chooses four points $q_\beta, q_\gamma, q_\delta, q_\epsilon$ such that each of the pairs $\{q_\beta, q_\gamma\}$ and 
$\{q_\delta, q_\epsilon\}$ lies on one of the six lines, and none of the pairs $\{q_\beta, q_\delta\}$, $\{q_\beta, q_\epsilon\}$,
$\{q_\gamma, q_\delta\}$, $\{q_\gamma, q_\epsilon\}$ lies on any of the six lines. Then, he considers the pencil
$|Q - q_\beta - q_\gamma - q_\delta - q_\epsilon|$  of conics passing through the four points $q_\beta, q_\gamma, q_\delta, q_\epsilon$. The 
pull-back 
$$\pi_\alpha^* |Q - q_\beta - q_\gamma - q_\delta - q_\epsilon|$$ 
is an elliptic pencil
on $\hat{\mathrm{K}}$. In \cite[Theorem 2]{Keum}  Keum proves the following theorem:
\begin{theorem}[Keum]
\label{theorem_Keum}
If for the Kummer surface $\hat{\mathrm{K}}$ introduced in Section \ref{Kummer_surfaces}, the genus-two curve $\mathbf{C}$ is generic
then there exists an elliptic fibration on it with six singular fibers of Kodaira-type $I_2$, one singular fiber 
of Kodaira-type $I_4$, one singular fiber of Kodaira-type $I_2^*$, and Mordell-Weil group $(\mathbb{Z}_2)^2$.
\end{theorem}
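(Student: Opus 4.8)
The plan is to carry out Keum's construction explicitly and verify its three assertions: that $\pi_\alpha^*|Q-q_\beta-q_\gamma-q_\delta-q_\epsilon|$ is a genus-one pencil with a section, the list of singular fibers, and $\mathrm{MW}=(\mathbb{Z}_2)^2$. First I would fix the double cover model. Projection from the node $p_\alpha$ exhibits $\widehat{\mathrm{K}}$ as a resolved double cover $\phi\colon\widehat{\mathrm{K}}\to S$, where $S$ is $\mathbb{CP}^2$ blown up at the $\binom{6}{2}=15$ pairwise intersections of the six branch lines $\ell_1,\dots,\ell_6$ (the images of the six tropes through $p_\alpha$). For a generic curve $\mathbf{C}$ these six lines are in general position and tangent to a common conic, the fifteen points are the images of the remaining fifteen nodes, the strict transforms $\tilde\ell_i$ become pairwise disjoint, and $\sum_i\tilde\ell_i\in|-2K_S|$, so $\phi$ is an honest double cover of the smooth rational surface $S$ with smooth branch divisor. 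After relabelling so that $q_\beta=\ell_1\cap\ell_3$, $q_\gamma=\ell_1\cap\ell_4$, $q_\delta=\ell_2\cap\ell_5$, $q_\epsilon=\ell_2\cap\ell_6$ (the cross-conditions of Section~\ref{kummer_surface} force the four ``new'' lines to be distinct, hence equal to all of $\ell_3,\dots,\ell_6$), the pencil of conics through the four base points — all of which are among the fifteen blown-up points — becomes a morphism $\psi\colon S\to\mathbb{CP}^1$, and $\psi\circ\phi$ is the fibration under study.

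Next I would count branch points fibrewise. A general conic $C$ in the pencil meets $\ell_1$ exactly in $q_\beta,q_\gamma$ and $\ell_2$ exactly in $q_\delta,q_\epsilon$, so $\tilde C$ is disjoint from $\tilde\ell_1,\tilde\ell_2$; and $C$ meets each of $\ell_3,\dots,\ell_6$ in the relevant base point plus one further point, so $\tilde C$ meets the branch divisor $\sum_i\tilde\ell_i$ transversally in exactly four points. Hence $\phi^{-1}(\tilde C)$ is the double cover of $\mathbb{CP}^1$ branched at four points, a smooth genus-one curve, and $\psi\circ\phi$ is a genus-one fibration. Since $\ell_3,\dots,\ell_6$ lie in the branch locus, each trope $T_i=\phi^{-1}(\tilde\ell_i)$, $i=3,\dots,6$, is a smooth rational curve mapping isomorphically onto $\tilde\ell_i$, and the projection formula gives $T_i\cdot(\text{fiber})=\tilde\ell_i\cdot\tilde C=1$: these are four sections, so the fibration is Jacobian.

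For the singular fibers I would go through the special members of $|C|$. There are three reducible conics, $\ell_1\cup\ell_2$, $m_1\cup m_2:=(q_\beta q_\delta)\cup(q_\gamma q_\epsilon)$, and $m_3\cup m_4:=(q_\beta q_\epsilon)\cup(q_\gamma q_\delta)$, and the remaining eleven blown-up nodes each lie on a unique member of the pencil. The five nodes $\ell_1\cap\ell_j$ and $\ell_2\cap\ell_k$ all lie on $\ell_1\cup\ell_2$; pulling $\phi$ back over this fibre one finds seven rational components with the dual graph and multiplicities $(2,2,2,1,1,1,1)$ of $\widetilde{\mathrm{D}}_6$, i.e.\ a fibre of type $I_2^*$. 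Over each of $m_1\cup m_2$ and $m_3\cup m_4$ the two line components meet at a point off the branch locus and each meets it twice, so $\phi^{-1}$ is a cycle of two rational curves, type $I_2$. The interesting members are the conics through the six nodes $\ell_i\cap\ell_j$ with $i,j\in\{3,4,5,6\}$: for all of them except the conic $C_0$ through $\ell_3\cap\ell_4$ and $\ell_5\cap\ell_6$ the preimage is again $I_2$, while $\tilde C_0$ is disjoint from the branch locus, so $\phi^{-1}(\tilde C_0)$ splits into two rational curves which, together with the preimages of the two exceptional curves over $\ell_3\cap\ell_4$ and $\ell_5\cap\ell_6$, form a cycle of four rational curves, type $I_4$. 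The crux is that $\ell_3\cap\ell_4$ and $\ell_5\cap\ell_6$ genuinely lie on a single member of the pencil: writing the pencil in coordinates and expanding the $6\times6$ conconicity determinant of the six dual points $\ell_i^*$, one checks that the condition ``$\ell_1,\dots,\ell_6$ tangent to a common conic'' is the same polynomial relation in the line parameters as ``$\ell_3\cap\ell_4$ and $\ell_5\cap\ell_6$ are conjugate on a conic of $|C|$'', and no further such coincidence occurs for generic $\mathbf{C}$. Tallying, the singular fibres are $I_2^*\oplus I_4\oplus 6\,I_2$, with Euler numbers $8+4+12=24=e(\widehat{\mathrm{K}})$, so there are no others.

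Finally, for $\mathrm{MW}$ I would invoke Shioda--Tate: by Nikulin's theorem $\mathrm{NS}(\widehat{\mathrm{K}})\cong\mathrm{S}$ has rank $17$, so the Mordell--Weil rank is $17-2-\mathrm{rank}(\mathrm{A}_1^6\oplus\mathrm{A}_3\oplus\mathrm{D}_6)=0$, and comparing $|\!\det\mathrm{S}|=2^6$ with the determinant $2^{10}$ of the trivial lattice gives $|\mathrm{MW}|^2=2^{10}/2^6=2^4$, hence $|\mathrm{MW}|=4$. Taking $T_3$ as the zero section, one checks that $T_4,T_5,T_6$ meet the three multiplicity-one components of the $I_2^*$ fibre other than the one met by $T_3$; since the component group $(\mathbb{Z}_2)^2$ of $\widetilde{\mathrm{D}}_6$ acts simply transitively on these four components, the specialization map $\mathrm{MW}\to(\mathbb{Z}_2)^2$ is onto, forcing $\mathrm{MW}=(\mathbb{Z}_2)^2$ with nonzero elements $T_4-T_3$, $T_5-T_3$, $T_6-T_3$. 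The main obstacle is the singular-fibre bookkeeping of the third step, and in particular the identification of the tangent-conic condition with the conjugacy of $\ell_3\cap\ell_4$ and $\ell_5\cap\ell_6$; everything else is a branch-point count or a computation inside the known lattice $\mathrm{S}$.
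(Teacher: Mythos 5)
The paper itself contains no proof of this statement: it is imported verbatim from Keum's paper (cited as \cite[Theorem 2]{Keum}), and Section~\ref{kummer_surface} only uses the resulting fibration, so your argument is filling a gap rather than paralleling a proof in the text. What you write is essentially the natural fleshing-out of the construction the paper describes in Section~5.1, and it is correct: the projection from $p_0$ realizes $\widehat{\mathrm{K}}$ as the resolved double cover of $\mathbb{CP}^2$ branched along the six tropes through $p_0$ (which, in the paper's coordinates, are visibly tangent to the conic $z_2^2=4z_1z_3$); the branch-point count shows the pulled-back conic pencil is a genus-one pencil with the tropes $T_3,\dots,T_6$ as sections; the fiber over $\ell_1\cup\ell_2$ is the $\widetilde{\mathrm{D}}_6$ configuration $2T_1+2T_2+2N_{12}+N_{15}+N_{16}+N_{23}+N_{24}$, the two cross-line pairs and the four conics through a single extra node give $I_2$'s, the conic through both remaining nodes gives the $I_4$, and the Euler-number tally $8+4+12=24$ rules out further singular fibers; finally Shioda--Tate with $\mathrm{NS}\cong\mathrm{S}$ (rank $17$, $|\det|=2^6$) gives $|\mathrm{MW}|=4$, and the four sections hitting the four simple components of the $I_2^*$ fiber force $\mathrm{MW}=(\mathbb{Z}_2)^2$, in agreement with the paper's remark that $T_3,T_4,T_5,T_6$ are the zero and two-torsion sections. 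The one step you state too loosely is the coincidence producing the $I_4$ fiber: asserting that the tangency-to-a-common-conic condition is ``the same polynomial relation'' as the conconicity of the six points is stronger than needed and left unchecked. What you actually need is the classical fact that the six vertices of two triangles circumscribed about a conic (here the triangles $\ell_1\ell_3\ell_4$ and $\ell_2\ell_5\ell_6$) lie on a conic; with the paper's parametrization $q_{ij}=[1:\theta_i+\theta_j:\theta_i\theta_j]$ this is a short direct verification, and it also confirms the paper's later statement that the $I_4$ member passes through the two nodes complementary to the base points. With that point made precise, and the routine genericity statements (no extra incidences for generic $\theta_i$) noted, your proof is complete.
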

\noindent
We have the following result:
\begin{corollary}
\label{parts4a}
For generic values of the parameters $a, b, c$,
the K3 surface $\mathbf{Y}_{a,b,c}$ in Equation~(\ref{WE}) is the Kummer surface of the Jacobian variety of generic curve of genus two.
\end{corollary}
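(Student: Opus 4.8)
The plan is to match the elliptic fibration on $\mathbf{Y}_{a,b,c}$ with the one Keum produces on $\mathrm{Kum}(J(\mathbf{C}))$ and then invoke the lattice-theoretic characterization of Kummer surfaces recalled at the end of Section~\ref{Kummer_surfaces}. First I would record that, for generic $a,b,c$, the table in Section~\ref{K3families} shows that $\mathbf{Y}_{a,b,c}\to\mathbb{CP}^1$ has exactly the singular fibers $I_4\oplus 6\,I_2\oplus I_2^*$ and Mordell-Weil group $(\mathbb{Z}_2)^2$, which is precisely the configuration appearing in Keum's Theorem~\ref{theorem_Keum}. In particular $\rho(\mathbf{Y}_{a,b,c})=2+\mathrm{rank}\,\mathcal{W}^{\mathrm{root}}=17$, so the Picard rank is minimal for a K3 surface carrying this fibration, and the decomposition recalled in Section~\ref{prelim} gives $\mathrm{NS}(\mathbf{Y}_{a,b,c})=\mathcal{H}\oplus\mathcal{W}$, where $\mathcal{W}$ is the overlattice of $\mathcal{W}^{\mathrm{root}}=\mathrm{A}_3\oplus\mathrm{A}_1^{\oplus 6}\oplus\mathrm{D}_6$ determined by the isotropic subgroup $(\mathbb{Z}_2)^2\subset (\mathcal{W}^{\mathrm{root}})^*/\mathcal{W}^{\mathrm{root}}$ coming from the full two-torsion of the Weierstrass model $y^2=x\,(x+2\,p_+(t))(x+2\,p_-(t))$, the subgroup being pinned down by the reducible-fiber components met by the three two-torsion sections.

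Next I would apply Keum's Theorem~\ref{theorem_Keum}: for a generic genus-two curve $\mathbf{C}$ the Kummer surface $\mathrm{Kum}(J(\mathbf{C}))$ carries a Jacobian elliptic fibration with the same singular fibers $I_4\oplus 6\,I_2\oplus I_2^*$ and the same Mordell-Weil group $(\mathbb{Z}_2)^2$. Since $\mathrm{NS}(J(\mathbf{C}))=\mathbb{Z}[\mathbf{C}]$ has rank one, also $\rho(\mathrm{Kum}(J(\mathbf{C})))=17$, so again $\mathrm{NS}(\mathrm{Kum}(J(\mathbf{C})))=\mathcal{H}\oplus\mathcal{W}$ with the \emph{same} overlattice $\mathcal{W}$: the two-torsion sections of Keum's pencil meet the $I_4$, $I_2$ and $I_2^*$ fibers in the same combinatorial pattern, so the isotropic subgroup, and hence $\mathcal{W}$, is the one above. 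On the other hand, the Nikulin theorem at the end of Section~\ref{Kummer_surfaces} gives $\mathrm{NS}(\mathrm{Kum}(J(\mathbf{C})))\cong\mathrm{S}$. Combining, $\mathrm{NS}(\mathbf{Y}_{a,b,c})\cong\mathcal{H}\oplus\mathcal{W}\cong\mathrm{S}$, and running that same Nikulin theorem in the converse direction yields that $\mathbf{Y}_{a,b,c}$ is the Kummer surface of the Jacobian variety of a generic curve of genus two, as claimed.

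The step I expect to be the real obstacle is the identification of the two overlattices $\mathcal{W}$, i.e.\ showing the N\'eron-Severi lattices genuinely agree rather than merely sharing rank, signature and discriminant. A clean way to settle this is to use that the discriminant of $\mathrm{NS}(\mathbf{Y}_{a,b,c})$ has already been computed in Section~\ref{K3families} to be $2^6=\det(\mathrm{S})$, while $\mathcal{H}\oplus\mathcal{W}^{\mathrm{root}}$ sits inside $\mathrm{NS}(\mathbf{Y}_{a,b,c})$ with index $|\mathrm{MW}(\mathbf{Y}_{a,b,c})|=4$ and $|\det(\mathcal{H}\oplus\mathcal{W}^{\mathrm{root}})|=2^{10}$; this forces the index-four overlattice, and the only even overlattice compatible with the $(\mathbb{Z}_2)^2$ torsion configuration is $\mathrm{S}$ itself, which bypasses an explicit discriminant-form computation. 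Alternatively one may simply cite the uniqueness up to isometry of the N\'eron-Severi lattice of a K3 surface of minimal Picard rank carrying this Jacobian fibration type, which is implicit in Keum's analysis identifying the pencil of Theorem~\ref{theorem_Keum} with the $(16,6)$-configuration; this is also the route taken in Section~\ref{kummer_surface}, where the explicit parameter relations~(\ref{parameter_abc}) are extracted by directly matching the two Weierstrass models.
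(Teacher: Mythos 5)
Your route is genuinely different from the paper's. The paper proves the corollary by directly comparing the fibration data of $\mathbf{Y}_{a,b,c}$ with the fibration Keum constructs, and the real content backing that comparison is the explicit identification in Section~\ref{kummer_surface}: Keum's pencil of conics on the Kummer quartic is written out, matched with the Weierstrass form~(\ref{WE}), and the parameters $a,b,c$ are solved for (Lemma~\ref{parts4b}). You instead propose a purely lattice-theoretic argument: show $\mathrm{NS}(\mathbf{Y}_{a,b,c})\cong\mathrm{S}$ and invoke Nikulin's characterization. That is a legitimate alternative strategy, but as written it has a genuine gap at precisely the step you flag, and neither of your two proposed fixes closes it.

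The gap is the identification of the overlattice $\mathcal{W}$, equivalently of the isotropic subgroup of $D(\mathrm{A}_3\oplus\mathrm{A}_1^{\oplus 6}\oplus\mathrm{D}_6)$ generated by the images of the two-torsion sections. Equal rank, signature and determinant ($2^6$) do not imply isometry, so the index-four counting argument proves nothing beyond what the paper already computed; what is needed is the discriminant \emph{form}, i.e.\ exactly which fiber components the three two-torsion sections meet. And this is not forced by the numerical data alone: besides the configuration actually realized by $y^2=x\,(x+2p_+(t))(x+2p_-(t))$ (one section through the non-identity components of all six $I_2$'s and a ``near'' component of $I_2^*$; the other two through three $I_2$'s each, the middle component of the $I_4$ and ``far'' components of $I_2^*$), there are other totally isotropic $(\mathbb{Z}_2)^2$-subgroups all of whose nonzero elements are consistent with the height-zero condition for $2$-torsion sections --- for instance one section meeting five $I_2$'s and a far component of $I_2^*$, another meeting three $I_2$'s, the $I_4$ and a far component. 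So the assertions that Keum's sections ``meet the fibers in the same combinatorial pattern'' and that ``the only even overlattice compatible with the torsion configuration is $\mathrm{S}$'' are exactly what must be proved, not citable facts. To complete your argument you would need to (i) read off the section--component incidences from the Weierstrass model of $\mathbf{Y}_{a,b,c}$, (ii) either do the same for Keum's pencil (his tropes $T_3,\dots,T_6$ are the sections) or compute the discriminant form of $\mathrm{S}$ from Nikulin's description, and (iii) appeal to uniqueness in the genus for this rank-$17$ indefinite lattice to conclude $\mathrm{NS}(\mathbf{Y}_{a,b,c})\cong\mathrm{S}$. The paper sidesteps all of this by transforming Keum's fibration explicitly into the form~(\ref{WE}), which is why its short comparison suffices there.
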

\begin{proof}
Comparing the result of Theorem \ref{theorem_Keum} with the Jacobian elliptic fibration on $\mathbf{Y}_{a,b,c}$ in Equation~(\ref{WE})
proves the statement.
\end{proof}

\noindent
\subsection{The pencil of conics}
To describe the pencil of conics explicitly, let us first re-write the elliptic fibration (\ref{WE}) using the affine coordinate
$r=t-c$. The polynomial $p(t)$ becomes
\begin{equation*}
 p(r) = 4 \, r^3 + 12 \, C \, r^2 - 3 \, A \, r - B \;,
\end{equation*}
with
\begin{equation}
\label{parameter_abc}
 a= A + 4 \, C^2 \;, \quad b = B - 3\, A \, C - 8 \, C^3 \;, \quad c = C \;.
\end{equation}
For $p_\pm(r) = p(r) \mp r^2$, we denote the roots of $p_+(r)=0$ and $p_-(r)=0$ by $r_2, r_4, r_6$
and $r_1, r_3, r_5$ respectively. Hence we have
\begin{equation*}
\begin{split}
\hat{p}_{+}(r)   & =  p(r) - r^2 = 4 \, \prod_{i=1}^3 (r-r_{2i}) \;,\\
\hat{p}_{-}(r)   & =  p(r) + r^2 = 4 \, \prod_{i=1}^3 (r-r_{2i-1}) \;.
\end{split}
\end{equation*}
The equation
\begin{eqnarray*}
 p(r) & = & 2 \left( \prod_{i=1}^3 (r-r_{2i-1}) + \prod_{i=1}^3 (r-r_{2i}) \right) = 4 \, r^3 + 12 \, C \, r^2 - 3 \, A \, r - B 
\end{eqnarray*}
implies the following relation between the roots $r_i$ and the parameters $A, B, C$:
\begin{equation}
\label{parameter_ABC}
\begin{split}
 - 3 \, A & = 2 \, \left( r_1 r_3 + r_1 r_5 + r_3 r_5 + r_2 r_4 + r_2 r_6  + r_4 r_6 \right) \;,\\
        B & = 2 \, \left( r_1 r_3 r_5 + r_2 r_4 r_6 \right) \;,\\
 - 6 \, C & = r_1 + r_2 + r_3 + r_4 + r_5 + r_6 \;.
\end{split}
\end{equation}
The equation
\begin{eqnarray*}
 r^2 & = & 2 \left( \prod_{i=1}^3 (r-r_{2i-1}) - \prod_{i=1}^3 (r-r_{2i}) \right)  
\end{eqnarray*}
implies the following constraints on the roots:
\begin{eqnarray}
\label{constraint1}
 0  & = & r_1 r_3 r_5 - r_2 r_4 r_6 \;,\\
\label{constraint2}
 0  & = & r_2 r_4 + r_2 r_6 + r_4 r_6 - r_1 r_3 - r_1 r_5 - r_3 r_5 \;,\\
\label{constraint3}
 1  & = & 2\, \left(r_2 + r_4 + r_6 - r_1 - r_3 -r_5\right) \;.
\end{eqnarray}
We apply Keum's result for $p_{\alpha}=p_0$ and consider the elliptic pencil of conics $|Q - q_{14} - q_{15} - q_{23} - q_{26}|$
through the points $\{q_{14}, q_{15}, q_{23}, q_{26}\}$. The pull-back 
$$\pi_0^*(Q - q_{14} - q_{15} - q_{23} - q_{26})$$
is an elliptic pencil  with the same configuration of singular fibers and Mordell-Weil group as the Weierstrass fibration in Equation~(\ref{WE}).
In fact, from the proof of \cite[Theorem 2]{Keum} it also follows that the zero section and the two-torsion sections are $T_3, T_4, T_5, T_6$.

\noindent
We write the conic in $\mathbb{CP}^2$ in the form
\begin{equation*}
 Q = A_1 z_1^2 + A_2 z_2^2 + A_3 z_3^2 + 2 B_3 z_1 z_2 + 2 B_2 z_1 z_3 + 2 B_1 z_2 z_3 =0 \;,
\end{equation*}
and set
\begin{equation*}
 R = \left( \begin{array}{ccc}A_1 & B_3 & B_2 \\ B_3 & A_2 & B_1 \\ B_2 & B_1 & A_3 \end{array} \right) \;.
\end{equation*}
The conic degenerates if and only it decomposes into a product of two lines, i.e., $\det R=0$. 
This happens at the points where the elliptic pencil develops a singular fiber.
The coordinates of the points $q_{ij}$ are $ q_{ij} = [1: \theta_i + \theta_j : \theta_i \, \theta_j]$.
It is easy to show that for the conic section that runs through $q_{14}, q_{15}, q_{23}, q_{26}$
we can choose $\frac{B_1}{A_2}$ as a free parameter. 

Using the results of Keum \cite{Keum}, it follows from the description of the elliptic pencil in the $(16,6)$-configuration that at 
the singular fiber of Kodaira type $I_2^*$ the conic $Q$ degenerates to the sum of two lines joining $q_{14}$ and $q_{15}$, $q_{23}$ 
and $q_{26}$ respectively. On the other hand, the Weierstrass fibration
(\ref{WE}) on $\mathbf{Y}_{a,b,c}$ assumes a singular fiber of Kodaira type $I_2^*$ at $r=\infty$.
Hence, we have shown that $Q_{\infty}=\pi_0(T_1) \cdot \pi_0(T _2)$. At the singular fiber of Kodaira type $I_4$, the conic passes through
the six points $q_{14}, q_{15}, q_{23}, q_{26}$ and $q_{45}, q_{36}$. Comparison with the Weierstrass fibration
(\ref{WE}) shows that the singular fiber of Kodaira type $I_4$ must be located at $r=0$.
Therefore, we have determined that the variable $r$ is $Q_0/Q_{\infty}$ up to a scale factor, since we have determined the correct numerator and denominator. 
The denominator blows up when $Q_\infty=0$ which happens when $r=\infty$, the numerator is zero when $r=0$. Thus, we have
\begin{equation}
\label{r_variable}
  r  =   \alpha \; \frac{Q_{0}}{Q_\infty} \;. 
\end{equation}
Similarly, we can determine for the remaining six singular fibers in what way the conic $Q$ degenerates and at what value for the variable $r$
in the elliptic fibration (\ref{WE}) this will happen. We conclude:

\noindent
At $r_2$, the conic $Q=Q_{r_2}$ is the product of two lines joining $q_{14}$ and $q_{23}$, $q_{15}$ and $q_{26}$.

\noindent
At $r_1$, the conic $Q=Q_{r_1}$ is the product of two lines joining $q_{14}$ and $q_{26}$, $q_{15}$ and $q_{23}$.

\noindent
At $r_4$, the conic $Q=Q_{r_4}$ contains the points $q_{14}, q_{15}, q_{23}, q_{26}$ and $q_{34}$. 

\noindent
At $r_6$, the conic $Q=Q_{r_6}$ contains the points $q_{14}, q_{15}, q_{23}, q_{26}$ and $q_{56}$.  

\noindent
At $r_3$, the conic $Q=Q_{r_3}$ contains the points $q_{14}, q_{15}, q_{23}, q_{26}$ and $q_{35}$.  

\noindent
At $r_5$, the conic $Q=Q_{r_5}$ contains the points $q_{14}, q_{15}, q_{23}, q_{26}$ and $q_{46}$.  

\noindent 
This enables us to compute the roots $r_i$, i.e., the location of the singular fibers of Kodaira-type $I_2$, up to a scale factor:
$r-r_i$ is $Q_{r_i}/Q_{\infty}$ up to a scale factor, since we have determined the correct numerator and denominator. 
It then follows from Equation~(\ref{r_variable}) that
\begin{eqnarray*}
 (r - r_i) & = &  \alpha \, \frac{C_{r_i}}{C_\infty}  \quad \Rightarrow \quad r_i = \alpha \; \frac{C_{0}-C_{r_i}}{C_\infty} \;.  
\end{eqnarray*}
We computed the conics at the points $r_i$, solved for $r_i$, and obtained
\begin{equation*}
\begin{array}{rcl}
 r_1 & = &  \alpha \, (\theta_3 - \theta_5) (\theta_4 - \theta_6)\;,\\
 r_2 & = &  \alpha \, (\theta_3 - \theta_4) (\theta_5 - \theta_6)\;,\\ 
 r_3 & = &  \alpha \, \dfrac{(\theta_1 - \theta_2) (\theta_3 - \theta_5) (\theta_4 - \theta_3) (\theta_5-\theta_6)}
                      {(\theta_1 - \theta_5)(\theta_2-\theta_3)} \;,\\
 r_4 & = &  \alpha \, \dfrac{(\theta_1 - \theta_2) (\theta_3 - \theta_4) (\theta_4 - \theta_6) (\theta_5-\theta_3)}
                      {(\theta_1 - \theta_4)(\theta_2-\theta_3)} \;,\\ 
 r_5 & = &  \alpha \, \dfrac{(\theta_1 - \theta_2) (\theta_3 - \theta_4) (\theta_4 - \theta_6) (\theta_5-\theta_6)}
                      {(\theta_1 - \theta_4)(\theta_2-\theta_6)} \;,\\
 r_6 & = &  \alpha \, \dfrac{(\theta_1 - \theta_2) (\theta_3 - \theta_5) (\theta_4 - \theta_6) (\theta_5-\theta_6)}
                      {(\theta_1 - \theta_5)(\theta_2-\theta_6)} \;.
\end{array}
\end{equation*}
We checked that these roots satisfy the constraints Equation~(\ref{constraint1}) and Equation~(\ref{constraint2}).
We used Equation~(\ref{constraint3}) to determine $\alpha$ and obtained
\begin{equation}
\label{parameter}
\begin{split}
 r_1 & =   \frac { \left( \theta_{{1}}-\theta_{{5}} \right)  \left( \theta_{{2}}-\theta_{{6}} \right)  \left( \theta_{{3}}-\theta_{{2}} \right)
\left( \theta_{{4}}-\theta_{{1}} \right)  \left( \theta_{{5}}-\theta_{{3}} \right)  \left( \theta_{{4}}-\theta_{{6}} \right) }
{2 \, (\theta_1 - \theta_3)(\theta_2-\theta_4)(\theta_3-\theta_6)(\theta_4-\theta_5)(\theta_5-\theta_2)(\theta_6-\theta_1)}\;,\\
 r_2 & =  \frac { \left( \theta_{{1}}-\theta_{{5}} \right)  \left( \theta_{{2}}-\theta_{{6}} \right)  \left( \theta_{{3}}-\theta_{{2}} \right)
\left( \theta_{{4}}-\theta_{{1}} \right)  \left( \theta_{{5}}-\theta_{{6}} \right)  \left( \theta_{{4}}-\theta_{{3}} \right) }
{2 \, (\theta_1 - \theta_3)(\theta_2-\theta_4)(\theta_3-\theta_6)(\theta_4-\theta_5)(\theta_5-\theta_2)(\theta_6-\theta_1)}\;,\\
 r_3 & =  \frac { \left( \theta_{{1}}-\theta_{{2}} \right)  \left(\theta_{{2}}-\theta_{{6}} \right)  \left( \theta_{{3}}-\theta_{{4}} \right)
 \left( \theta_{{4}}-\theta_{{1}} \right)  \left( \theta_{{5}}- \theta_{{3}} \right)  \left( \theta_{{5}}-\theta_{{6}} \right) }
{2 \, (\theta_1 - \theta_3)(\theta_2-\theta_4)(\theta_3-\theta_6)(\theta_4-\theta_5)(\theta_5-\theta_2)(\theta_6-\theta_1)}\;,\\
 r_4 & =  \frac { \left( \theta_{{1}} -\theta_{{2}} \right)  \left( \theta_{{2}}-\theta_{{6}} \right)  \left( \theta_{{3}}-\theta_{{5}} \right)
 \left( \theta_{{4}}-\theta_{{3}} \right)  \left( \theta_{{5}}-\theta_{{1}} \right)  \left( \theta_{{4}}-\theta_{{6}} \right) }
{2 \, (\theta_1 - \theta_3)(\theta_2-\theta_4)(\theta_3-\theta_6)(\theta_4-\theta_5)(\theta_5-\theta_2)(\theta_6-\theta_1)}\;,\\
 r_5 & =  \frac { \left( \theta_{{1}}-\theta_{{2}} \right)  \left( \theta_{{2}}-\theta_{{3}} \right)  \left( \theta_{{3}}-\theta_{{4}} \right) 
 \left( \theta_{{4}}-\theta_{{6}} \right)  \left( \theta_{{5}}-\theta_{{1}} \right)  \left( \theta_{{5}}-\theta_{{6}} \right) }
{2 \, (\theta_1 - \theta_3)(\theta_2-\theta_4)(\theta_3-\theta_6)(\theta_4-\theta_5)(\theta_5-\theta_2)(\theta_6-\theta_1)}\;,\\
 r_6 & =   \frac { \left( \theta_{{1}}-\theta_{{2}} \right)  \left( \theta_{{2}}-\theta_{{3}} \right)  \left( \theta_{{3}}-\theta_{{5}} \right) 
 \left( \theta_{{4}}-\theta_{{1}} \right)  \left( \theta_{{5}}-\theta_{{6}} \right)  \left( \theta_{{4}}-\theta_{{3}} \right) }
 {2 \, (\theta_1 - \theta_3)(\theta_2-\theta_4)(\theta_3-\theta_6)(\theta_4-\theta_5)(\theta_5-\theta_2)(\theta_6-\theta_1)}\;.
\end{split}
\end{equation}
We can substitute the roots (\ref{parameter}) into Equations (\ref{parameter_ABC}) to obtain $A, B, C$. In turn, $A, B, C$ can be substituted into Equations 
(\ref{parameter_abc}) to obtain $a, b, c$.

\noindent 
Thus, we have proved the following lemma:
\begin{lemma}
\label{parts4b}
For generic values of the parameters $a, b, c$, the K3 surface $\mathbf{Y}_{a,b,c}$ is 
the Kummer surface of the Jacobian variety of a generic curve $\mathbf{C}$ of genus two.
Equations~(\ref{parameter_abc}), (\ref{parameter_ABC}), and (\ref{parameter}) express the complex 
parameters $a, b, c$ defining $\mathbf{Y}_{a,b,c}$ in terms of the roots of the sextic curve (\ref{curve_2}) defining $\mathbf{C}$.
\end{lemma}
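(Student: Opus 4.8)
The plan is to combine the lattice-theoretic half of the statement, which is already contained in Corollary~\ref{parts4a}, with an explicit matching of moduli. For the first half, Corollary~\ref{parts4a} shows that for generic $a,b,c$ the Jacobian elliptic fibration~(\ref{WE}) on $\mathbf{Y}_{a,b,c}$ has singular fibers $6\,I_2 \oplus I_4 \oplus I_2^*$ and Mordell--Weil group $(\mathbb{Z}_2)^2$; by Keum's Theorem~\ref{theorem_Keum} this is precisely the fibration cut out on the Kummer quartic $\widehat{\mathrm{K}}$ by the pencil $\pi_0^*\,|Q-q_{14}-q_{15}-q_{23}-q_{26}|$, so the two N\'eron--Severi lattices agree and, by Nikulin's theorem quoted in Section~\ref{Kummer_surfaces}, $\mathbf{Y}_{a,b,c}\cong\mathrm{Kum}(J(\mathbf{C}))$ with $\mathbf{C}$ a generic genus-two curve. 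Nothing new is needed here beyond restating Corollary~\ref{parts4a}.

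The substance of the lemma is the second half: writing $a,b,c$ in terms of the branch points $\theta_1,\dots,\theta_6$ of $\mathbf{C}$ in~(\ref{curve_2}). I would pass to the shifted coordinate $r=t-c$, so that $p$ becomes $4r^3+12Cr^2-3Ar-B$ and $(\ref{parameter_abc})$ records the change of variables; decomposing $p(r)=2\bigl(\prod_i(r-r_{2i-1})+\prod_i(r-r_{2i})\bigr)$ and $r^2=2\bigl(\prod_i(r-r_{2i-1})-\prod_i(r-r_{2i})\bigr)$ then yields the symmetric-function identities $(\ref{parameter_ABC})$ together with the constraints $(\ref{constraint1})$--$(\ref{constraint3})$ on the roots $r_i$. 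It remains to locate the $r_i$ on the $\theta$-side. Using Keum's description of the pencil one reads off, for each of the eight points where $\det R=0$, how the conic $Q$ degenerates: at $r=\infty$ (the $I_2^*$ fiber) it splits into the line through $q_{14},q_{15}$ times the line through $q_{23},q_{26}$; at $r=0$ (the $I_4$ fiber) it is the conic through the six points $q_{14},q_{15},q_{23},q_{26},q_{45},q_{36}$; and at each of the six $I_2$ fibers it is either one of the remaining two splittings into lines joining the four base points, or the conic through those four base points plus one extra node $q_{jk}$. Since $r=\alpha\,Q_0/Q_\infty$ and $r-r_i=\alpha\,Q_{r_i}/Q_\infty$ up to scale, evaluating these conics with $q_{ij}=[1:\theta_i+\theta_j:\theta_i\theta_j]$ solves for each $r_i$ as a cross-ratio-type expression in the $\theta_k$; imposing $(\ref{constraint3})$ fixes $\alpha$ and produces $(\ref{parameter})$. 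Back-substituting $(\ref{parameter})$ into $(\ref{parameter_ABC})$ and then into $(\ref{parameter_abc})$ gives $a,b,c$ as rational functions of the $\theta_k$, completing the proof.

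The step I expect to be the real obstacle is the geometric dictionary in the previous paragraph: extracting from the proof of Theorem~\ref{theorem_Keum} exactly which line-pair, or which additional node, the conic $Q$ specializes to at each of the eight base points, and verifying that the value of $r$ at which each specialization happens matches the order-of-vanishing data of the discriminant $(\ref{Ydiscriminant})$. Once this is pinned down the rest is routine: the $2\times2$ minors of $R$ are evaluated at the special configurations, the $r_i$ are solved for, and a useful independent check is that the resulting roots satisfy $(\ref{constraint1})$ and $(\ref{constraint2})$ identically, with $(\ref{constraint3})$ serving to eliminate the final scaling freedom.
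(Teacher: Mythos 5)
Your proposal follows the paper's own argument essentially verbatim: the first half is Corollary~\ref{parts4a} via Keum's Theorem~\ref{theorem_Keum}, and the second half is the same shift $r=t-c$, the symmetric-function identities (\ref{parameter_ABC}) with constraints (\ref{constraint1})--(\ref{constraint3}), the identification $r=\alpha\,Q_0/Q_\infty$ from the degenerations of the pencil of conics at the $I_4$ and $I_2^*$ fibers, and the determination of the $r_i$ (hence $\alpha$ via (\ref{constraint3})) from the remaining degenerate conics, yielding (\ref{parameter}). The step you flag as the obstacle --- pinning down which line-pair or extra node occurs at each $I_2$ fiber --- is exactly the content the paper extracts from Keum's construction, so your outline matches the published proof.
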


\section{Proof of Theorem \ref{maintheorem}}
\label{proof}
\noindent
Statement $(1)$ of Theorem \ref{maintheorem} is Lemma \ref{parts1}.

\noindent
Statement $(2)$ of Theorem \ref{maintheorem}  is Lemma \ref{parts2}.

\noindent
Statement $(3)$ of Theorem \ref{maintheorem}  is Lemma \ref{parts3a}, \ref{parts3b}, \ref{parts3c}.

\noindent
Statement $(4)$ of Theorem \ref{maintheorem}  is Corollary \ref{parts4a}  and Lemma \ref{parts4b}.

\section{Conlcusion}
We showed that for generic values of the parameters $a, b, c$ the K3 surfaces $\mathbf{Y}_{a,b,c}$ are
the Kummer surfaces of the Jacobian variety of the curve $\mathbf{C}$ of genus two. We proved this by finding two divisors, one at $t=0$ and the other for the fiber at $t=\infty$ to write 
down an actual elliptic fibration on the Kummer surface associated with a generic curve of genus two. The elliptic fibration matches the one on $\mathbf{Y}_{a,b,c}$ 
and enabled us to express the parameters $a, b, c$ in terms of the roots of the sextic curve defining $\mathbf{C}$.
Thus, $\mathbf{X}_{a,b,c}$ is a  K3 surface with a Nikulin involution and is a rational double-cover of $\mathbf{Y}_{a,b,c}$.
The rational quotient map induces a Hodge isometry $\mathrm{T}_{\mathbf{X}}(2) \cong \mathrm{T}_{\mathbf{Y}}$ over $\mathbb{Q}$.
This is in accordance with the fact that if there is a Hodge isometry between $\mathrm{T}_{\mathbf{X}}(2)\otimes \mathbb{Q}$ and $\mathrm{T}_{\mathbf{Y}}\otimes \mathbb{Q}$ then their determinants differ by a square in $\mathbb{Q}$ -- which $2^8=2^5\cdot 2^3$ and $2^6$ do. 

\section*{Acknowledgments}
\noindent
I would like to thank David Morrison for many helpful 
discussions and a lot of encouragement. I would also like to thank the referee
for many helpful comments.

\appendix

\section{The discriminant group}
Let $(\mathrm{L},Q)$ be an even lattice over $\mathbb{Z}$ with a quadratic form $Q$ and the dual lattice $\mathrm{L}^\vee$.
The discriminant group is by definition $D(\mathrm{L})=\mathrm{L}^\vee/\mathrm{L}$.
The natural projection is denoted by $\Psi: \mathrm{L}^{\vee} \to D(\mathrm{L})$. A non-degenerate quadratic form $q_{\mathrm{L}}$ on $D(\mathrm{L})$ with 
values in $\mathbb{Z}/2 \mod{2}$ is given by $q_{\mathrm{L}}(x) = Q(x') \mod{2}$ where $x' \in \mathrm{L}^\vee$ such that $\Psi(x')=x$.

There is a bijection between the set of isotopic subgroups of the discriminant group $(D(\mathrm{L}),q_{\mathrm{L}})$ and the set
of overlattices of $\mathrm{L}$, i.e., the integral sub-lattices of $\mathrm{L}^\vee$ containing $\mathrm{L}$.
\begin{theorem}[Nikulin \cite{Nikulin2}]
If $\mathrm{V} \subset D(\mathrm{L})$ is a subgroup isotopic with respect
to $q_{\mathrm{L}}$, then $\mathrm{M}=\Psi^{-1}(\mathrm{V})$ is an even overlattice of $\mathrm{L}$, and the discriminant form
of $\mathrm{M}$ is isomorphic to
\begin{equation*}
  \Big( D(\mathrm{M}), q_{\mathrm{M}} \Big) =\Big( D(\mathrm{L}), q_{\mathrm{L}} \Big)\Big|_{\mathrm{V}^\perp/\mathrm{V}} \;.
\end{equation*}
The correspondence $\mathrm{V} \to \mathrm{M}$ is a bijection.
\end{theorem}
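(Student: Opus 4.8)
The plan is to make the asserted bijection explicit and then read off the discriminant form by a short diagram chase; nothing beyond the definitions of $\mathrm{L}^\vee$, $D(\mathrm{L})$ and the finite forms on it is needed. Write $b_{\mathrm{L}}$ for the $\mathbb{Q}/\mathbb{Z}$-valued bilinear form on $D(\mathrm{L})$ induced by the bilinear form of $\mathrm{L}$, so that $q_{\mathrm{L}}(v+w)=q_{\mathrm{L}}(v)+q_{\mathrm{L}}(w)+2\,b_{\mathrm{L}}(v,w)$ in $\mathbb{Q}/2\mathbb{Z}$; here ``overlattice'' means a finite-index extension, so every integral overlattice $\mathrm{M}$ of $\mathrm{L}$ satisfies $\mathrm{L}\subseteq\mathrm{M}\subseteq\mathrm{M}^\vee\subseteq\mathrm{L}^\vee$. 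First I would record that, since $\Psi\colon\mathrm{L}^\vee\to D(\mathrm{L})$ is surjective, the assignments $\mathrm{V}\mapsto\Psi^{-1}(\mathrm{V})$ and $\mathrm{M}\mapsto\Psi(\mathrm{M})=\mathrm{M}/\mathrm{L}$ are mutually inverse bijections between subgroups of $D(\mathrm{L})$ and sublattices of $\mathrm{L}^\vee$ containing $\mathrm{L}$. Under this correspondence, for $x,y\in\Psi^{-1}(\mathrm{V})$ the residue of the pairing $Q(x,y)$ modulo $\mathbb{Z}$ depends only on $\Psi(x),\Psi(y)$ and equals $b_{\mathrm{L}}(\Psi(x),\Psi(y))$, while the residue of $Q(x,x)$ modulo $2\mathbb{Z}$ depends only on $\Psi(x)$ and equals $q_{\mathrm{L}}(\Psi(x))$ (using that $\mathrm{L}$ is already even and $Q(\mathrm{L},\mathrm{L}^\vee)\subseteq\mathbb{Z}$). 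Hence $\Psi^{-1}(\mathrm{V})$ is an even lattice if and only if $q_{\mathrm{L}}|_{\mathrm{V}}=0$, i.e. $\mathrm{V}$ is isotropic; and isotropy forces $\mathrm{V}\subseteq\mathrm{V}^\perp$ by polarizing ($2\,b_{\mathrm{L}}(v,w)=0$ in $\mathbb{Q}/2\mathbb{Z}$, hence $b_{\mathrm{L}}(v,w)=0$ in $\mathbb{Q}/\mathbb{Z}$, for $v,w\in\mathrm{V}$). This simultaneously shows that $\mathrm{M}=\Psi^{-1}(\mathrm{V})$ is an even overlattice and that $\mathrm{V}\mapsto\mathrm{M}$ restricts to a bijection between isotropic subgroups of $D(\mathrm{L})$ and even overlattices of $\mathrm{L}$.

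For the discriminant form I would first identify $\mathrm{M}^\vee$: for $x\in\mathrm{L}^\vee$ one has $x\in\mathrm{M}^\vee$ iff $Q(x,m)\in\mathbb{Z}$ for every $m\in\mathrm{M}=\mathrm{L}+\Psi^{-1}(\mathrm{V})$; the condition against $\mathrm{L}$ is automatic, so this amounts to $b_{\mathrm{L}}(\Psi(x),v)=0$ for all $v\in\mathrm{V}$, i.e. $\Psi(x)\in\mathrm{V}^\perp$. Thus $\mathrm{M}^\vee=\Psi^{-1}(\mathrm{V}^\perp)$, and since $\mathrm{L}\subseteq\Psi^{-1}(\mathrm{V})\subseteq\Psi^{-1}(\mathrm{V}^\perp)$ the third isomorphism theorem yields a canonical isomorphism $D(\mathrm{M})=\mathrm{M}^\vee/\mathrm{M}=\Psi^{-1}(\mathrm{V}^\perp)/\Psi^{-1}(\mathrm{V})\cong\mathrm{V}^\perp/\mathrm{V}$. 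Under it the class in $D(\mathrm{M})$ of $x\in\mathrm{M}^\vee$ goes to $\Psi(x)+\mathrm{V}$, and $q_{\mathrm{M}}$ of that class equals $Q(x,x)\bmod 2\mathbb{Z}=q_{\mathrm{L}}(\Psi(x))$; so $q_{\mathrm{M}}$ is exactly $q_{\mathrm{L}}$ restricted to $\mathrm{V}^\perp$ and pushed down to $\mathrm{V}^\perp/\mathrm{V}$, which is the claimed equality $(D(\mathrm{M}),q_{\mathrm{M}})=(D(\mathrm{L}),q_{\mathrm{L}})|_{\mathrm{V}^\perp/\mathrm{V}}$. The push-down is legitimate precisely because $\mathrm{V}$ is isotropic: for $y\in\mathrm{V}^\perp$ and $v\in\mathrm{V}$ one has $q_{\mathrm{L}}(y+v)=q_{\mathrm{L}}(y)+q_{\mathrm{L}}(v)+2\,b_{\mathrm{L}}(y,v)=q_{\mathrm{L}}(y)$ in $\mathbb{Q}/2\mathbb{Z}$, and similarly $b_{\mathrm{L}}$ descends; non-degeneracy of the resulting form on $\mathrm{V}^\perp/\mathrm{V}$ is inherited from that of $q_{\mathrm{L}}$.

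I do not expect a serious obstacle: the whole argument is bookkeeping with the two competing conventions (namely $b_{\mathrm{L}}$ valued in $\mathbb{Q}/\mathbb{Z}$ versus $q_{\mathrm{L}}$ valued in $\mathbb{Q}/2\mathbb{Z}$, and ``even'' versus merely ``integral''). The one genuinely load-bearing point is the well-definedness of the push-down of $q_{\mathrm{L}}$ to $\mathrm{V}^\perp/\mathrm{V}$ in the last step, where isotropy of $\mathrm{V}$ for $q_{\mathrm{L}}$ is used and not just $\mathrm{V}\subseteq\mathrm{V}^\perp$: a subgroup isotropic for $b_{\mathrm{L}}$ but not for $q_{\mathrm{L}}$ gives an integral overlattice that is not even, and then $q_{\mathrm{L}}$ fails to descend. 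A minor care point is to stay within finite-index (hence rank-preserving) overlattices throughout, so that $\mathrm{M}^\vee\subseteq\mathrm{L}^\vee$ and all quotients above are finite.
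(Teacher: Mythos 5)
Your proof is correct and complete; since the paper only quotes this theorem with a citation to Nikulin \cite{Nikulin2} rather than proving it, the only comparison to make is that your route --- the bijection $\mathrm{V}\mapsto\Psi^{-1}(\mathrm{V})$ between subgroups of $D(\mathrm{L})$ and intermediate lattices $\mathrm{L}\subseteq\mathrm{M}\subseteq\mathrm{L}^\vee$, the identification $\mathrm{M}^\vee=\Psi^{-1}(\mathrm{V}^\perp)$, and the descent of $q_{\mathrm{L}}$ to $\mathrm{V}^\perp/\mathrm{V}$ --- is exactly the standard argument of the cited source. You also correctly isolate the load-bearing point: isotropy of $\mathrm{V}$ for $q_{\mathrm{L}}$ (not merely for the bilinear form $b_{\mathrm{L}}$) is what makes $\mathrm{M}$ even and makes the push-down of $q_{\mathrm{L}}$ to $\mathrm{V}^\perp/\mathrm{V}$ well defined.
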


\noindent
For the lattice of $\mathrm{D}_m$, we denote a basis of $\mathrm{L}^\vee$
by $\lbrace d_1^*, \dots , d_m^* \rbrace$, which is dual to a basis of simple roots 
$\lbrace d_1, \dots , d_m \rbrace$. By $\bar{d}_i^*$ we denote their images under 
$\Psi_{\mathrm{L}}$. We use the analogous notation for the lattice of $\mathrm{A}_m$.

\subsection{The case $\mathcal{W}^{\mathrm{root}}= \mathrm{D}_{16}$}
\label{discriminant1}
In the case that $\mathcal{W}^{\mathrm{root}}= \mathrm{D}_{16}$ and $\mathrm{MW}(\mathbf{X})=\mathbb{Z}_2$ for the K3 surface $\mathbf{X}$, we have
a discriminant group
\begin{equation*}
 \Big( D(\mathcal{W}^{\mathrm{root}}), q_{\mathcal{W}^{\mathrm{root}}} \Big) = \left( \mathbb{Z}/2 \oplus \mathbb{Z}/2  
, \left[ \begin{array}{cc} 4 & \frac{1}{2} \\ \frac{1}{2} & 1 \end{array} \right] \right) \;.
\end{equation*}
Generators are $\langle \bar{d}_1^* \rangle \oplus \langle \bar{d}_{16}^* \rangle \cong  \mathbb{Z}/2 \oplus \mathbb{Z}/2$.
We have $q(\bar{d}_1^*)=4 \equiv 0 (2)$ and $\langle \bar{d}_1^* \rangle^{\perp} \cong \langle \bar{d}_1^* \rangle$.
It follows that for the isotopic subgroup $\mathrm{V}=\langle \bar{d}_1^* \rangle$ we have
\begin{equation*}
 \Big( D(\mathcal{W}^{\mathrm{root}}), q_{\mathcal{W}^{\mathrm{root}}} \Big)\Big|_{\mathrm{V}^\perp/\mathrm{V}} = \Big( 0, \left[0 \right] \Big) \;.
\end{equation*}
$\Psi^{-1}(\mathrm{V})$ is an even overlattice of $\mathcal{W}^{\mathrm{root}}$ with $\Psi^{-1}(\mathrm{V})/\mathcal{W}^{\mathrm{root}}\cong \mathbb{Z}_2$.
Hence, $\mathcal{W}$ is $\Psi^{-1}(\bar{d}_1^*)=\mathrm{D}^+_{16}$.
By \cite[Prop. 1.6.1]{Nikulin2}, the transcendental lattice $\mathrm{T}_{\mathbf{X}}$ satisfies
\begin{equation*}
 \Big( D(\mathrm{T}_{\mathbf{X}}), q_{\mathrm{T}_{\mathbf{X}}} \Big) \cong \Big( D(\mathcal{W}), -q_{\mathcal{W}} \Big) \;.
\end{equation*}
Thus, the transcendental lattice has signature $(2,2)$ and discriminant group $(0)$.
By \cite[Corollary 2.9(iii)]{Morrison84}, a lattice $\mathrm{T}$ of signature $(2,2)$ occurs as transcendental 
lattice of a K3 surface if and only if $\mathrm{T} \cong \mathrm{H} \oplus \mathrm{T}'$ where $\mathrm{T}'$ is a lattice of signature $(1,1)$.
Thus, we have $\mathrm{T}_{\mathbf{X}} =  \mathrm{H}^2$.

\subsection{The case $\mathcal{W}^{\mathrm{root}}= \mathrm{D}_8 \oplus \mathrm{A}_7$}
\label{discriminant2}
In the case $\mathcal{W}^{\mathrm{root}}= \mathrm{D}_8 \oplus \mathrm{A}_7$ and $\mathrm{MW}(\mathbf{X})=\mathbb{Z}_2$ for $\mathbf{X}$,
we have
\begin{equation*}
 \Big( D(\mathcal{W}^{\mathrm{root}}), q_{\mathcal{W}^{\mathrm{root}}} \Big) = \left( \Big(\mathbb{Z}/2 \oplus \mathbb{Z}/2 \Big) 
\oplus \mathbb{Z}/8, \left[ \begin{array}{cc} 2 & \frac{1}{2} \\ \frac{1}{2} & 1 \end{array} \right] \oplus \left[\frac{1}{8} \right] \right) \;.
\end{equation*}
Generators are $\langle \bar{a}^*_7 \rangle \cong \mathbb{Z}/8$ and 
$\langle \bar{d}_1^* \rangle \oplus \langle \bar{d}_8^* \rangle  \cong \mathbb{Z}/2 \oplus \mathbb{Z}/2$.
We have $q(\bar{d}_1^*)=2 \equiv 0 (2)$ and $\langle \bar{d}_1^* \rangle^{\perp} \cong \langle \bar{d}_1^* \rangle \oplus \langle \bar{a}^*_7 \rangle$.
It follows that for the isotopic subgroup $\mathrm{V}=\langle \bar{d}_1^*  + 0\rangle$ we have
\begin{equation*}
 \Big( D(\mathcal{W}^{\mathrm{root}}), q_{\mathcal{W}^{\mathrm{root}}} \Big)\Big|_{\mathrm{V}^\perp/\mathrm{V}} = 
\left( \mathbb{Z}/8, \left[\frac{1}{8} \right] \right) \;.
\end{equation*}
$\Psi^{-1}(\mathrm{V})$ is an even overlattice of $\mathcal{W}^{\mathrm{root}}$ with $\Psi^{-1}(\mathrm{V})/\mathcal{W}^{\mathrm{root}}\cong \mathbb{Z}_2$. 
For $\mathrm{L}=\mathrm{D}_8$ the overlattice is $\mathrm{D}_8^+ = \Psi^{-1}(\bar{d}_1^*)= \mathrm{E}_8$, and for $\mathrm{L}=\mathrm{A}_7$
the overlattice is $\Psi^{-1}(0)=\mathrm{A}_7$. Hence, we have $\mathcal{W} = \Psi^{-1}(\mathrm{V})=\mathrm{E}_8 \oplus \mathrm{A}_7$.
It follows that $\mathrm{NS}(\mathbf{X}) \cong \langle 8 \rangle  \oplus \mathrm{E}_8  \oplus \mathrm{E}_8$.
By \cite[Thm. 7.1]{Shimada}, the transcendental lattice $\mathrm{T}_{\mathbf{X}}$ satisfies
\begin{equation*}
 \Big( D(\mathrm{T}_{\mathbf{X}}), q_{\mathrm{T}_{\mathbf{X}}} \Big) \cong \Big( D(\mathcal{W}), -q_{\mathcal{W}} \Big) \;.
\end{equation*}
Thus, the transcendental lattice has signature $(2,3)$ and discriminant group $\mathbb{Z}/8$.
By \cite[Corollary 2.9(iii)]{Morrison84}, a lattice $\mathrm{T}$ of signature $(2,3)$ occurs as transcendental 
lattice of a K3 surface if and only if $\mathrm{T} \cong \mathrm{H}^2 \oplus \mathrm{T}'$ where $\mathrm{T}'$ is a negative definite lattice of rank $1$.
Thus, we have $\mathrm{T}_{\mathbf{X}} =  \mathrm{H}^2 \oplus \langle -8\rangle$.

\vspace{-0.28cm}


\begin{thebibliography}{GKZ}

\bibitem{CD} 
A. Clingher, C. Doran,
Modular invariants for lattice polarized K3 surfaces.
Michigan Math. J. \textbf{55} (2007), no. 2, 355--393. 

\bibitem{CD2} 
A. Clingher, C. Doran,
On K3 surfaces with large complex structure.
Adv. Math. \textbf{215} (2007), no. 2, 504--539.

\bibitem{CD3} 
A. Clingher, C. Doran,
Note on a Geometric Isogeny of K3 Surfaces.\\
arXiv:1004.3335~[math.AG]

\bibitem{CD4} 
A. Clingher, C. Doran,
Lattice Polarized K3 Surfaces and Siegel Modular Forms.\\
arXiv:1004.3503~[math.AG]

\bibitem{Freed}
D. S. Freed, 
Special K\"ahler manifolds.
Comm. Math. Phys. \textbf{203} (1999), no. 1, 31--52. 

\bibitem{VanGeemenSarti}
B. van Geemen, A. Sarti, 
Nikulin Involutions on K3 Surfaces. 
Math. Zeitschrift \textbf{255} (2007), no. 4, 731--753.

\bibitem{Hoyt89}
W. Hoyt,
Elliptic fiberings of Kummer surfaces. Number theory (New York, 1985/1988), 89--110,
Lecture Notes in Math., 1383, Springer, Berlin, 1989.

\bibitem{Hoyt}
W. Hoyt,
On twisted Legendre equations and Kummer surfaces.
Proc. Sympos. Pure Math., 49, Part 1, Amer. Math. Soc., Providence, RI, 1989. 

\bibitem{Husemoeller}
D. Husem\"oller, 
Elliptic curves.
Graduate Texts in Mathematics, 111. Springer-Verlag, New York, 2004. 

\bibitem{Keum} 
J. H. Keum,
Two extremal elliptic fibrations on Jacobian Kummer surfaces.
Manuscripta Math. \textbf{91} (1996), no. 3, 369--377. 

\bibitem{Kulikov}
V. S. Kulikov,
Surjectivity of the period mapping for $K3$ surfaces. (Russian)
Uspehi Mat. Nauk \textbf{32} (1977), no. 4(196), 257--258. 

\bibitem{Kumar} 
A. Kumar, 
K3 surfaces associated with curves of genus two.
Int. Math. Res. Not. IMRN 2008, no. 6, Art. ID rnm165, 26 pp. 

\bibitem{Looijenga}
E. Looijenga,
A Torelli theorem for K~ihler-Einstein K3 surfaces. 
Lecture Notes in Mathematics, vol. 894, 107-112. Berlin-Heidelberg-New York: Springer 1981.

\bibitem{MalmendierMorrison} 
A. Malmendier, D. R. Morrison,
Degeneration of Kummer surfaces associated with curves of genus two. To
appear.  

\bibitem{Mehran}
A. Mehran,
Even eight on a Kummer surface.
Thesis (Ph.D.)ÐUniversity of Michigan. 2006.

\bibitem{Mehran2}
A. Mehran,
Double covers of Kummer surfaces.
Manuscripta Math. \textbf{123} (2007), no. 2, 205--235. 

\bibitem{Miranda} 
R. Miranda, 
The Basic Theory of Elliptic Surfaces, 
ETS Editrice, Pisa, 1989.

\bibitem{Morrison84}
D. R. Morrison,
On K3 surfaces with large Picard number.
Invent. Math. \textbf{75} (1984), no. 1, 105--121. 

\bibitem{MorrisonVafa} 
D. R. Morrison, C.  Vafa, 
Compactifications of $F$-theory on Calabi-Yau threefolds. II.
Nuclear Phys. B \textbf{476} (1996), no. 3, 437--469. 

\bibitem{Namikawa}
Y. Namikawa, 
Surjectivity of period map for K3 surfaces. Classification of algebraic and
analytic manifolds, Progress in Mathematics, vol. 39, 379-397, Boston-Basel-Stuttgart:
Birkh\"auser 1983.

\bibitem{Nikulin} 
V. V. Nikulin,
An analogue of the Torelli theorem for Kummer surfaces of Jacobians.
Izv. Akad. Nauk SSSR Ser. Mat. \textbf{38} (1974), 22--41. 

\bibitem{Nikulin2}
V. V. Nikulin,
Integer symmetric bilinear forms and some of their geometric applications. (Russian)
Izv. Akad. Nauk SSSR Ser. Mat. \textbf{43} (1979), no. 1, 111--177, 238. 

\bibitem{Oguiso} 
K. Oguiso,
On Jacobian fibrations on the Kummer surfaces of the product of nonisogenous elliptic curves.
J. Math. Soc. Japan \textbf{41} (1989), no. 4, 651--680. 

\bibitem{SW}
N. Seiberg, E. Witten,
Monopoles, duality and chiral symmetry breaking in $N=2$ supersymmetric QCD.
Nuclear Phys. B \textbf{431} (1994), no. 3, 484--550. 

\bibitem{Sen}
A. Sen, 
F-theory and orientifolds. 
Nuclear Phys. B \textbf{475} (1996), 562.  

\bibitem{Shimada}
I. Shimada,
On elliptic K3 surfaces.
Michigan Math. J. \textbf{47} (2000), no. 3, 423--446. 

\bibitem{Shioda}
T. Shioda,
The period map of Abelian surfaces.
J. Fac. Sci. Univ. Tokyo Sect. IA Math. \textbf{25} (1978), no. 1, 47--59. 

\bibitem{Siu}
Y.-T. Siu, 
A simple proof of the surjectivity of the period map of K3 surfaces. 
Manuscripta math. \textbf{35} (1981), 311--321.

\bibitem{Todorov}
A. N. Todorov,
Applications of the K\"ahler-Einstein-Calabi-Yau metric to moduli of $K3$ surfaces.
Invent. Math. \textbf{61} (1980), no. 3, 251--265. 

\end{thebibliography}
\end{document}